\documentclass[11pt]{amsart}

\usepackage{amsmath,amssymb,amsfonts,amsthm}
\usepackage{mathtools}
\usepackage{mathrsfs}
\usepackage{graphicx}
\usepackage{multirow}
\usepackage[title]{appendix}
\usepackage{xcolor}
\usepackage{textcomp}
\usepackage{booktabs}
\usepackage[hidelinks]{hyperref}

\newtheorem{theorem}{Theorem}[section]
\newtheorem{proposition}[theorem]{Proposition}
\newtheorem{lemma}[theorem]{Lemma}
\newtheorem{corollary}[theorem]{Corollary}
\theoremstyle{definition}
\newtheorem{definition}[theorem]{Definition}
\newtheorem{example}[theorem]{Example}
\theoremstyle{remark}
\newtheorem{remark}[theorem]{Remark}

\title[Metric Completion of Multi-Weighted Conformal Metrics]{Metric Completion and Boundary Geometry of Multi-Weighted Conformal Metrics on Manifolds with Corners}
\author{Muhamad Fahmi bin Zanal Abidin}
\address{Independent Researcher, Kuala Lumpur, Malaysia}
\email{fahmiemail123@gmail.com}
\subjclass[2020]{53C23, 53C21, 51F99}
\keywords{singular conformal metrics, metric completion, manifolds with corners, boundary-defining functions, conformal Grushin geometry, snowflake metrics, Hausdorff dimension, stratified boundary geometry}
\date{}

\begin{document}
\begin{abstract}
We study the metric completion of conformally singular Riemannian
metrics on the interior of a compact manifold with corners. Let
\(X\) be a compact manifold with corners, let \(g_0\) be a smooth
Riemannian background metric, and let
\(\rho_1,\ldots,\rho_N\) be boundary-defining functions associated with
the boundary hypersurfaces. Given nonnegative weights
\(\alpha_1,\ldots,\alpha_N\), we compare two natural interaction laws,
namely the product metric
\[
g_\Pi
=
\left(
\prod_{i=1}^{N}\rho_i^{-\alpha_i}
\right)g_0
\]
and the sum metric
\[
g_\Sigma
=
\left(
\sum_{i=1}^{N}\rho_i^{-\alpha_i}
\right)g_0.
\]
For a boundary point \(p\), let \(I(p)\) denote its active index set,
and define
\[
A(p)
=
\sum_{i\in I(p)}\alpha_i,
\qquad
M(p)
=
\max_{i\in I(p)}\alpha_i.
\]
We prove that \(p\) occurs at finite distance in the metric completion
of the product metric precisely when
\(A(p)<2\),
whereas for the sum metric the corresponding criterion is
\(M(p)<2\).
Thus the product interaction accumulates singularity across
simultaneously active hypersurfaces, while the sum interaction is
controlled by the largest active weight.
We further prove that, over every accessible boundary point, the active
normal directions collapse to a single point in the metric completion.
This yields a canonical identification of the metric completion with
the subset of the background manifold consisting of the interior
together with its accessible boundary points, and the identification
is shown to be a homeomorphism when the accessible subset carries the
subspace topology inherited from \(X\).
The two interaction laws also produce different boundary-incidence
behaviour. For the product metric, individually accessible boundary
hypersurfaces may determine an inaccessible deeper open face because
the active weights accumulate additively. For the sum metric, an open
face is accessible whenever all of its active hypersurface weights are
subcritical.
Finally, we determine the intrinsic metric geometry induced on
accessible open faces. For the product interaction, an open face
\(F_I^\circ\) with \(A_I=\sum_{i\in I}\alpha_i<2\) carries locally a
snowflake metric with exponent \(1-A_I/2\), and for the sum
interaction, an accessible open face with \(M_I=\max_{i\in
I}\alpha_i<2\) carries locally a snowflake metric with exponent
\(1-M_I/2\). Consequently, if \(F_I^\circ\) has dimension \(m\), its
induced Hausdorff dimension is \(m/(1-A_I/2)\) in the product case and
\(m/(1-M_I/2)\) in the sum case.
These results extend the geometric study of conformal boundary
singularities from a single boundary-defining function to settings in
which several singular hypersurfaces meet and their weights interact
at corners.
\end{abstract}

\maketitle

\noindent
\textit{ORCID:} \href{https://orcid.org/0009-0009-1024-9858}{0009-0009-1024-9858}

\section{Introduction}
\label{sec:introduction}
%================================================

Singular Riemannian metrics arise naturally when a smooth background
metric is modified by a conformal factor that degenerates or blows up
near a distinguished subset. Such metrics occur in geometric analysis,
sub-Riemannian and almost-Riemannian geometry, conformal Grushin
geometry, spectral theory, and models involving singular boundary
behaviour; representative references include
\cite{Cheeger1979,Cheeger1983,Mazzeo1991,Romney2016,CdVDdHT2024}. Even when the underlying manifold is compact, the singular
metric may place portions of its topological boundary either at finite
or infinite metric distance, and the resulting metric completion may
have a geometry substantially different from the original boundary.

A basic model is a conformal singularity associated with a single
boundary-defining function. If \(u\) is a defining function for a
smooth boundary hypersurface and
\[
g=u^{-\alpha}\bar g,
\]
where \(\bar g\) is smooth and nondegenerate up to the boundary, then
the normal distance to the boundary is governed by
\[
\int_0^\varepsilon s^{-\alpha/2}\,ds.
\]
Accordingly, the threshold
\[
\alpha<2
\]
separates the finite-distance and infinite-distance regimes. This
elementary integrability threshold also appears in recent geometric and
spectral analyses of conformally singular boundary metrics; see
\cite{CdVDdHT2024,CdVDT2026}.

The purpose of the present paper is to study what happens when there is
not a single singular boundary hypersurface, but several boundary
hypersurfaces that may meet at corners. At such a point, several
boundary-defining functions vanish simultaneously, and one must specify
how their individual singular contributions interact.

Let \(X\) be a compact manifold with corners, with boundary
hypersurfaces
\[
H_1,\ldots,H_N,
\]
and let
\[
\rho_1,\ldots,\rho_N
\]
be corresponding boundary-defining functions. Let \(g_0\) be a smooth
Riemannian metric on \(X\), and assign to each hypersurface a
nonnegative weight
\[
\alpha_i\geq0.
\]

We compare two natural conformal interaction laws on the interior
\(X^\circ\). The first is the multiplicative, or product, interaction
\begin{equation}
g_\Pi
=
\left(
\prod_{i=1}^{N}\rho_i^{-\alpha_i}
\right)g_0,
\label{eq:introproductmetric}
\end{equation}
and the second is the additive, or sum, interaction
\begin{equation}
g_\Sigma
=
\left(
\sum_{i=1}^{N}\rho_i^{-\alpha_i}
\right)g_0.
\label{eq:introsummetric}
\end{equation}

These two metrics have the same individual singular factors but combine
them by different algebraic rules. The central observation of this
paper is that this algebraic distinction governs the geometry of the
metric completion.

If \(p\in\partial X\), let
\[
I(p)
=
\{i:p\in H_i\}
\]
be the active index set at \(p\). We define
\begin{equation}
A(p)
=
\sum_{i\in I(p)}\alpha_i
\label{eq:introactiveproductweight}
\end{equation}
and
\begin{equation}
M(p)
=
\max_{i\in I(p)}\alpha_i.
\label{eq:introactivesumweight}
\end{equation}

For the product interaction, the active singular orders accumulate, and
the finite-distance criterion is
\[
A(p)<2.
\]
For the sum interaction, the strongest active singularity controls the
finite-distance behaviour, and the corresponding criterion is
\[
M(p)<2.
\]

Thus the familiar one-boundary threshold \(2\) persists, but the
effective exponent at a corner depends on the algebraic interaction
law.

%================================================
\subsection{Relation to Existing Singular Metric Geometry}
\label{subsec:relatedsingulargeometry}
%================================================

The geometry of metrics with singular or degenerate conformal factors
has been studied from several perspectives. Grushin-type and conformal
Grushin geometries provide important examples in which singular
conformal behaviour changes metric scaling and leads naturally to
H\"older control of the induced distance; see \cite{Romney2016}. In
spectral models with conformal boundary blow-up, the singular scaling
also enters the Hausdorff dimension and the Weyl exponent
\cite{CdVDdHT2024}.

There is also an extensive geometric framework for manifolds with
corners and related singular spaces. Joyce developed systematic
foundations for ordinary and generalized manifolds with corners,
including boundary strata and categorical structures
\cite{Joyce2012,Joyce2016}; the later stratified extension is developed
in \cite{Joyce2026}. We use this language to organize
the active boundary hypersurfaces and open faces of \(X\), while the
metric questions considered here concern the additional geometry
created by singular conformal weights.

A broader analytic literature concerns Riemannian spaces with conic,
edge, and stratified singularities. Foundational work of Cheeger
developed the spectral geometry of spaces with cone-like and more
general singular Riemannian structures
\cite{Cheeger1979,Cheeger1983}. Mazzeo developed an elliptic theory for
differential edge operators \cite{Mazzeo1991}, and subsequent work has
studied Hodge theory, harmonic forms, analytic torsion, and related
operators on spaces carrying incomplete or iterated edge metrics
\cite{HunsickerMazzeo2005,MazzeoVertman2011,
AlbinLeichtnamMazzeoPiazza2013}.

These theories provide an important analytic framework for geometric
spaces with singular strata. The present paper addresses a different,
more elementary metric-completion question. The underlying space is a
smooth manifold with corners, while the singularity is introduced
through explicit conformal factors built from several
boundary-defining functions. Our emphasis is on how the algebraic
interaction among these factors determines finite-distance
accessibility, completion topology, and the induced metric geometry of
the surviving open faces.

A closely related recent direction concerns spectral geometry for
conformally singular metrics on manifolds with smooth boundary. Colin
de Verdi\`ere, Dietze, de Hoop, and Tr\'elat study singular metrics
arising from conformal boundary blow-up and derive Weyl asymptotics for
the associated Laplace--Beltrami operators, including applications to
acoustic modes in gas giant planets
\cite{CdVDdHT2024}. Their analysis also connects the singular metric
scaling with Hausdorff dimension.

More recently, Colin de Verdi\`ere, Dietze, and Tr\'elat considered
metrics of the form
\[
g=u^{-\alpha}\bar g,
\]
where \(u\) is a boundary-defining function and the exponent
\(\alpha\) may vary along the boundary
\cite{CdVDT2026}. In the finite-distance regime, the condition
\[
\alpha<2
\]
again plays a fundamental role. Their spectral analysis further shows
that maximal values of a spatially varying boundary exponent can govern
leading Weyl asymptotics in appropriate boundary-dominated regimes.

Related work of Dietze and Read studies concentration of eigenfunctions
for singular Riemannian metrics in supercritical regimes
\cite{DietzeRead2024}, while Dietze investigates the corresponding
critical case
\cite{Dietze2025}. These results demonstrate that conformally singular
boundary metrics form an active area of current spectral and geometric
analysis.

The present problem is related to, but structurally distinct from, this
single-boundary setting. In the recent variable-exponent models, one
studies a single boundary-defining function with an exponent
\[
\alpha=\alpha(y)
\]
that varies with the boundary point \(y\). A quantity such as
\[
\alpha_{\max}
=
\max_{y\in\partial X}\alpha(y)
\]
therefore represents a maximum taken over spatial locations along the
boundary.

By contrast, in the sum interaction studied here,
\[
M_I
=
\max_{i\in I}\alpha_i
\]
is a maximum over several distinct hypersurface weights that are
simultaneously active at the same corner stratum. These are different
mechanisms:
\[
\boxed{
\begin{array}{c}
\text{spatial variation of one boundary exponent}
\\[3pt]
\text{versus}
\\[3pt]
\text{algebraic interaction of several incident boundary weights}.
\end{array}
}
\]

Likewise, the cumulative quantity
\[
A_I
=
\sum_{i\in I}\alpha_i
\]
has no analogue in the ordinary single-boundary model: it arises
specifically because several singular hypersurfaces are simultaneously
active and their conformal factors are multiplied.

The principal geometric question of this paper is therefore not the
existence of the threshold \(2\) itself. Rather, it is how a collection
of individually assigned boundary weights combines at higher-codimension
corners and how the resulting interaction law determines the metric
completion.

%================================================
\subsection{Contribution and Scope}
\label{subsec:introscope}
%================================================

The paper is primarily concerned with metric geometry and metric
completion. We do not develop the spectral theory of the resulting
multi-weight corner metrics here. The recent spectral results for
single-boundary singular metrics
\cite{CdVDdHT2024,DietzeRead2024,Dietze2025,CdVDT2026} suggest that the
interaction laws studied in this paper may lead to corresponding
spectral questions, including Weyl asymptotics and eigenfunction
concentration for singular metrics with several incident boundary
components.

%================================================
\subsection{Main Results}
\label{subsec:intromainresults}
%================================================

Our first result gives a complete pointwise accessibility criterion.

For the product metric,
\[
p\in\partial X
\]
occurs at finite metric distance if and only if
\[
A(p)
=
\sum_{i\in I(p)}\alpha_i
<2.
\]

For the sum metric, the corresponding criterion is
\[
M(p)
=
\max_{i\in I(p)}\alpha_i
<2.
\]

These criteria are proved using explicit finite-length approach curves
in the subcritical regime and metric lower bounds in the critical and
supercritical regimes. The lower-bound arguments allow arbitrary
anisotropic approaches to the corner and therefore do not require the
normal coordinates to vanish at comparable rates.

Our second result concerns the structure of the metric completion.
Whenever a boundary point is accessible, all admissible approaches in
the active normal directions determine the same completion point.
Thus the active normal variables collapse in the completion rather than
creating an additional family of boundary points.

Combining accessibility with normal collapse, we obtain a canonical
identification
\[
\overline{(X^\circ,d_g)}
\cong
X^{\mathrm{acc}},
\]
where
\[
X^{\mathrm{acc}}
=
X^\circ
\cup
\{p\in\partial X:p\text{ is accessible}\}.
\]
Using shrinking-neighbourhood estimates that remain valid across nearby
accessible strata, we show that this canonical bijection is a
homeomorphism when \(X^{\mathrm{acc}}\) carries the subspace topology
inherited from \(X\).

The two interaction laws nevertheless produce different incidence
behaviour. Suppose
\[
F_I^\circ
\]
is an open face with active index set \(I\). For the product metric,
\[
F_I^\circ
\text{ is accessible}
\quad\Longleftrightarrow\quad
\sum_{i\in I}\alpha_i<2.
\]
Consequently, several individually accessible hypersurfaces may meet in
an inaccessible deeper open face.

For the sum metric,
\[
F_I^\circ
\text{ is accessible}
\quad\Longleftrightarrow\quad
\max_{i\in I}\alpha_i<2,
\]
or equivalently,
\[
\alpha_i<2
\qquad
\text{for every }i\in I.
\]
Thus activating additional subcritical hypersurfaces preserves
accessibility. This statement is formulated in terms of open faces and
complete active index sets; it does not assert that every point of an
arbitrary set-theoretic intersection is accessible, since such an
intersection may contain deeper strata lying on additional
hypersurfaces.

Finally, we study the intrinsic metric induced on accessible open
faces. For the product interaction, an accessible face \(F_I^\circ\)
has local boundary metric
\[
d_{\partial,\Pi}(z,z')
\asymp
d_{F_I}(z,z')^{1-A_I/2}.
\]
For the sum interaction,
\[
d_{\partial,\Sigma}(z,z')
\asymp
d_{F_I}(z,z')^{1-M_I/2}
\]
at sufficiently small tangential scales.

The product model admits an exactly homogeneous local model, whereas
the unequal-weight sum model is generally not exactly homogeneous.
Accordingly, the product boundary power law follows from exact dilation
scaling together with nondegeneracy of the model boundary metric, while
the sum law is established by direct two-sided local estimates.

If
\[
m=\dim F_I^\circ,
\]
these snowflake laws yield
\[
\dim_H(F_I^\circ,d_{\partial,\Pi})
=
\frac{m}{1-A_I/2}
\]
in the product case and
\[
\dim_H(F_I^\circ,d_{\partial,\Sigma})
=
\frac{m}{1-M_I/2}
\]
in the sum case.

%================================================
\section{Geometric Setup and Definitions}
\label{sec:setup}
%================================================

In this section we introduce the geometric framework used throughout
the paper. We fix a compact manifold with corners, a smooth background
metric, a collection of boundary-defining functions, and nonnegative
weights attached to the boundary hypersurfaces. We then define the two
singular conformal interaction laws and the notion of accessibility
used in the metric-completion problem.

%================================================
\subsection{Manifolds with Corners and Boundary Hypersurfaces}
\label{subsec:manifoldswithcorners}
%================================================

Let \(X\) be a compact smooth \(n\)-dimensional manifold with corners.
We use the standard local model
\[
[0,\infty)^k\times\mathbb R^{n-k},
\qquad
0\leq k\leq n,
\]
and refer to \cite{Melrose1996,Joyce2012,Joyce2016,Joyce2026} for
background on manifolds with corners and their boundary
stratifications; the boundary-defining-function language used
throughout follows Melrose's original $b$-geometry framework
\cite{Melrose1996}.

We work throughout in the embedded-corners setting: the boundary
hypersurfaces are globally embedded and admit global defining functions
as below. Since \(X\) is compact, only finitely many such hypersurfaces
occur. Let
\[
H_1,\ldots,H_N
\]
denote these boundary hypersurfaces. For each \(i\), choose a
smooth boundary-defining function
\[
\rho_i:X\longrightarrow[0,\infty)
\]
such that
\[
H_i=\rho_i^{-1}(0)
\]
and
\[
d\rho_i\neq0
\qquad
\text{on }H_i.
\]

We assume that the defining functions are chosen so that they are
strictly positive away from their corresponding hypersurfaces:
\[
\rho_i>0
\qquad
\text{on }X\setminus H_i.
\]

Let
\[
X^\circ
=
X\setminus\partial X
\]
denote the interior of \(X\).

We fix a smooth Riemannian metric
\[
g_0
\]
on \(X\). Since \(X\) is compact, \(g_0\) induces a complete metric
space structure on \(X\), which we denote by
\[
d_{g_0}.
\]

The metric \(g_0\) serves only as a smooth background geometry. The
singular metrics considered below are defined on \(X^\circ\), where all
boundary-defining functions are strictly positive.

%================================================

\begin{remark}[Independence of the Choice of Defining Functions]
\label{rem:definingfunctioninvariance}
The qualitative conclusions below do not depend on the particular
choice of boundary-defining functions. If
\(\widetilde\rho_i=a_i\rho_i\), where \(a_i\) is smooth and strictly
positive, then compactness gives constants
\(0<c_i\leq a_i\leq C_i<\infty\). Consequently, the product conformal
factors defined using \(\rho_i\) and \(\widetilde\rho_i\) differ by a
bounded positive multiplicative factor, while the corresponding sum
conformal factors are uniformly comparable term by term. The resulting
metrics are therefore bi-Lipschitz equivalent. Standard metric-space
facts about bi-Lipschitz equivalent metrics and completions then imply
that accessibility and the completion topology are unchanged, while
bi-Lipschitz invariance of Hausdorff dimension preserves the stated
dimension conclusions; see, for example, \cite{BBI2001,heinonen2001}.
\end{remark}

\subsection{Active Index Sets and Open Faces}
\label{subsec:activeindexsets}
%================================================

For each point
\[
p\in X,
\]
define its \emph{active index set} by
\begin{equation}
I(p)
=
\{i\in\{1,\ldots,N\}:p\in H_i\}.
\label{eq:activeindexset}
\end{equation}

Equivalently,
\[
i\in I(p)
\quad\Longleftrightarrow\quad
\rho_i(p)=0.
\]

Thus
\[
I(p)=\varnothing
\]
if and only if
\[
p\in X^\circ.
\]

For an index set
\[
I\subseteq\{1,\ldots,N\},
\]
define the corresponding open face by
\begin{equation}
F_I^\circ
=
\left\{
p\in X:
I(p)=I
\right\}.
\label{eq:openface}
\end{equation}

Equivalently,
\begin{equation}
F_I^\circ
=
\left(
\bigcap_{i\in I}H_i
\right)
\setminus
\left(
\bigcup_{j\notin I}H_j
\right).
\label{eq:openfaceequivalent}
\end{equation}

Whenever
\[
F_I^\circ\neq\varnothing,
\]
it is a smooth manifold without boundary of codimension \(|I|\) in
\(X\).

We emphasize the distinction between the open face
\[
F_I^\circ
\]
and the full set-theoretic intersection
\[
\bigcap_{i\in I}H_i.
\]
The latter may contain deeper strata corresponding to active index sets
\[
J\supsetneq I.
\]

This distinction will be important when discussing accessibility and
boundary incidence.

%================================================
\subsection{Adapted Local Coordinates}
\label{subsec:adaptedcoordinates}
%================================================

Let
\[
p\in F_I^\circ,
\qquad
I=\{i_1,\ldots,i_k\}.
\]

There exists a neighbourhood \(U\) of \(p\) and adapted local
coordinates
\[
(x,z)
=
(x_1,\ldots,x_k,z_1,\ldots,z_{n-k})
\]
such that
\[
x_a\geq0,
\qquad
1\leq a\leq k,
\]
and the active hypersurfaces are represented locally by
\[
H_{i_a}\cap U
=
\{x_a=0\}.
\]

After multiplication by smooth positive functions, the active
boundary-defining functions are locally comparable to the corresponding
normal coordinates. More precisely, after shrinking \(U\) if
necessary, there exist constants
\[
0<c_\rho\leq C_\rho<\infty
\]
such that
\begin{equation}
c_\rho x_a
\leq
\rho_{i_a}(x,z)
\leq
C_\rho x_a,
\qquad
1\leq a\leq k.
\label{eq:definingfunctioncomparison}
\end{equation}

For every inactive index
\[
j\notin I,
\]
we have
\[
\rho_j(p)>0.
\]
Hence, after shrinking \(U\), there exist constants
\[
0<c_j\leq C_j<\infty
\]
such that
\begin{equation}
c_j
\leq
\rho_j
\leq
C_j
\qquad
\text{on }U.
\label{eq:inactivedefiningfunctionbounds}
\end{equation}

The smooth positive-definiteness of \(g_0\) implies uniform local
comparison with the Euclidean coordinate metric. Thus there exist
constants
\[
0<c_0\leq C_0<\infty
\]
such that
\begin{equation}
c_0
\left(
\sum_{a=1}^{k}dx_a^2+|dz|^2
\right)
\leq
g_0
\leq
C_0
\left(
\sum_{a=1}^{k}dx_a^2+|dz|^2
\right)
\label{eq:backgroundmetriccomparison}
\end{equation}
as quadratic forms on \(U\).

In particular, any cross-terms in the coordinate expression of \(g_0\)
are absorbed into the constants \(c_0\) and \(C_0\). No orthogonality
assumption is imposed on the boundary hypersurfaces with respect to
\(g_0\).

%================================================

\begin{remark}[Locality of Comparison Constants]
\label{rem:localcomparisonconstants}
Unless explicitly stated otherwise, constants obtained from adapted
coordinate comparisons are local: they may depend on the fixed
boundary point and on the chosen coordinate neighbourhood. No
uniformity over all boundary points is asserted or needed. This is
distinct from later global estimates obtained from compactness.
\end{remark}

\subsection{Weighted Singular Metrics}
\label{subsec:singularmetrics}
%================================================

Assign to each boundary hypersurface \(H_i\) a nonnegative weight
\[
\alpha_i\geq0.
\]

We study two Riemannian metrics on the interior \(X^\circ\).

The \emph{product interaction metric} is
\begin{equation}
g_\Pi
=
\left(
\prod_{i=1}^{N}
\rho_i^{-\alpha_i}
\right)
g_0.
\label{eq:productmetric}
\end{equation}

The \emph{sum interaction metric} is
\begin{equation}
g_\Sigma
=
\left(
\sum_{i=1}^{N}
\rho_i^{-\alpha_i}
\right)
g_0.
\label{eq:summetric}
\end{equation}

Both metrics are smooth and positive definite on \(X^\circ\), but may
be singular as one approaches \(\partial X\).

For an active index set \(I\), define the two effective weights
\begin{equation}
A_I
=
\sum_{i\in I}\alpha_i
\label{eq:effectiveproductweight}
\end{equation}
and
\begin{equation}
M_I
=
\max_{i\in I}\alpha_i.
\label{eq:effectivesumweight}
\end{equation}

At a point \(p\in\partial X\), we write
\begin{equation}
A(p)
=
A_{I(p)}
=
\sum_{i\in I(p)}\alpha_i
\label{eq:pointwiseproductweight}
\end{equation}
and
\begin{equation}
M(p)
=
M_{I(p)}
=
\max_{i\in I(p)}\alpha_i.
\label{eq:pointwisesumweight}
\end{equation}

For completeness, we set
\[
A_\varnothing=0.
\]

The quantity \(M_I\) will only be used for nonempty active sets.

%================================================
\subsection{Local Model Comparisons}
\label{subsec:localmodelcomparisons}
%================================================

Let
\[
p\in F_I^\circ
\]
and choose adapted coordinates as above.

For the product interaction, the inactive defining functions are
uniformly bounded above and below by positive constants. Combining
\eqref{eq:definingfunctioncomparison},
\eqref{eq:inactivedefiningfunctionbounds}, and
\eqref{eq:backgroundmetriccomparison}, there exist constants
\[
0<c_\Pi\leq C_\Pi<\infty
\]
such that
\begin{equation}
c_\Pi
\left(
\prod_{i\in I}
x_i^{-\alpha_i}
\right)
\left(
\sum_{i\in I}dx_i^2+|dz|^2
\right)
\leq
g_\Pi
\leq
C_\Pi
\left(
\prod_{i\in I}
x_i^{-\alpha_i}
\right)
\left(
\sum_{i\in I}dx_i^2+|dz|^2
\right).
\label{eq:localproductcomparison}
\end{equation}

After shrinking the adapted coordinate neighborhood if necessary, we
may take \(U\) to be a product coordinate cylinder of the form
\[
U=\{r(x)<r_1\}\times\{|z-z_0|<r_1\}
\]
for some \(r_1>0\). For every \(0<r_*<r_1\), the smaller normal region
\(\{r(x)\leq r_*\}\) can therefore be left in the normal variables only
by crossing the level \(r(x)=r_*\); if a curve instead exits through
the fixed tangential boundary of the coordinate cylinder, it incurs a
uniform positive \(g_0\)-distance and hence, by the local lower metric
comparison, a uniform positive \(g\)-length cost. This is the
stay-versus-exit alternative used in the necessity argument below.

For the sum interaction, the inactive terms remain uniformly bounded.
Thus, after shrinking the coordinate neighbourhood if necessary, there
exist constants
\[
0<c_\Sigma\leq C_\Sigma<\infty
\]
such that
\begin{equation}
c_\Sigma
\left(
1+
\sum_{i\in I}
x_i^{-\alpha_i}
\right)
\left(
\sum_{i\in I}dx_i^2+|dz|^2
\right)
\leq
g_\Sigma
\leq
C_\Sigma
\left(
1+
\sum_{i\in I}
x_i^{-\alpha_i}
\right)
\left(
\sum_{i\in I}dx_i^2+|dz|^2
\right).
\label{eq:localsumcomparison}
\end{equation}

Near a genuine singular boundary point for which at least one active
weight is positive, the additive constant \(1\) may be absorbed into
the singular sum after restricting to a sufficiently small
neighbourhood.

More explicitly, the inactive boundary-defining functions are uniformly
bounded above and below by positive constants on the chosen coordinate
neighborhood. Hence their contribution to the sum conformal factor may
be written as a function \(B(x,z)\) satisfying
\[
0<c_{\mathrm{inactive}}
\leq B(x,z)
\leq C_{\mathrm{inactive}}<\infty.
\]
Consequently,
\[
\min\{1,c_{\mathrm{inactive}}\}
\left(
1+\sum_{i\in I}x_i^{-\alpha_i}
\right)
\leq
B(x,z)+\sum_{i\in I}x_i^{-\alpha_i}
\leq
\max\{1,C_{\mathrm{inactive}}\}
\left(
1+\sum_{i\in I}x_i^{-\alpha_i}
\right).
\]
Thus the inactive terms and the nonsingular additive constant are
absorbed into uniform comparison constants.
 We retain it in
\eqref{eq:localsumcomparison}
because this formulation also covers active hypersurfaces with zero
weight.

These local comparison estimates are uniform in direction. In
particular, they remain valid for anisotropic approaches in which the
active normal coordinates tend to zero at different rates.

%================================================
\subsection{Metric Completion and Accessibility}
\label{subsec:accessibilitydefinition}
%================================================

Let
\[
g\in\{g_\Pi,g_\Sigma\},
\]
and let
\[
d_g
\]
denote the corresponding Riemannian distance on \(X^\circ\).

We write
\[
\overline{(X^\circ,d_g)}
\]
for the metric completion of \((X^\circ,d_g)\).

\begin{definition}[Accessible Boundary Point]
\label{def:accessiblepoint}

A point
\[
p\in\partial X
\]
is called \emph{accessible with respect to \(g\)} if there exists a
\(d_g\)-Cauchy sequence
\[
\{p_n\}\subset X^\circ
\]
such that
\[
p_n\longrightarrow p
\]
in the background topology of \(X\).

\end{definition}

\begin{remark}
The finite-length formulation introduced below is equivalent to
Definition~\ref{def:accessiblepoint} for the metrics considered in this
paper. The equivalence is proved in
Section~\ref{subsec:accessibilityformulationequivalence}; thus the two
uses of the word ``accessible'' do not define different classes of
boundary points.
\end{remark}

Define the accessible boundary by
\begin{equation}
\partial_g^{\mathrm{acc}}X
=
\left\{
p\in\partial X:
p\text{ is accessible with respect to }g
\right\},
\label{eq:accessibleboundary}
\end{equation}
and define the accessible subset
\begin{equation}
X_g^{\mathrm{acc}}
=
X^\circ
\cup
\partial_g^{\mathrm{acc}}X.
\label{eq:accessiblesubset}
\end{equation}

When the metric under consideration is clear, we write simply
\[
X^{\mathrm{acc}}.
\]

\begin{remark}[Accessibility and Finite-Length Approachability]
\label{rem:accessibilityequivalence}

The metric-completion formulation in
Definition~\ref{def:accessiblepoint}
is the primary definition of accessibility used throughout this paper.

We will also use a related notion. A point
\[
p\in\partial X
\]
is called \emph{finite-length approachable} if there exists an
absolutely continuous curve
\[
\gamma:[0,1)\longrightarrow X^\circ
\]
such that
\[
\gamma(t)\longrightarrow p
\qquad
\text{as }t\uparrow1
\]
in the background topology and
\[
L_g(\gamma)<\infty.
\]

Every finite-length approachable point is accessible. Indeed, choose
\[
t_n\uparrow1
\]
and set
\[
p_n=\gamma(t_n).
\]
For \(m>n\),
\[
d_g(p_n,p_m)
\leq
L_g\bigl(\gamma|_{[t_n,t_m]}\bigr).
\]
Since \(\gamma\) has finite total length, the lengths of its terminal
subarcs tend to zero. Hence
\[
\{p_n\}
\]
is a \(d_g\)-Cauchy sequence converging to \(p\) in the background
topology.

For the product and sum metrics studied here, the converse will follow
from the accessibility criteria proved in
Section~\ref{sec:accessibility}. Consequently, the two notions will
ultimately coincide. Until this equivalence has been established,
however, the term \emph{accessible} always refers to
Definition~\ref{def:accessiblepoint}.

\end{remark}

%================================================
\subsection{Accessible Open Faces}
\label{subsec:accessibleopenfaces}
%================================================

\begin{definition}[Accessible Open Face]
\label{def:accessibleopenface}

An open face
\[
F_I^\circ
\]
is called \emph{accessible with respect to \(g\)} if every point of
\(F_I^\circ\) is accessible with respect to \(g\).

\end{definition}

Because the active index set is constant on \(F_I^\circ\), the
accessibility criteria established later will imply that accessibility
is constant along each open face for the fixed-weight metrics considered
in this paper.

More precisely, we will prove
\[
F_I^\circ
\text{ is accessible for }g_\Pi
\quad\Longleftrightarrow\quad
A_I<2,
\]
and
\[
F_I^\circ
\text{ is accessible for }g_\Sigma
\quad\Longleftrightarrow\quad
M_I<2.
\]

These statements concern the open face \(F_I^\circ\), whose points have
complete active index set exactly \(I\). They should not be confused
with statements about the entire set-theoretic intersection
\[
\bigcap_{i\in I}H_i,
\]
which may contain deeper open faces with additional active
hypersurfaces.

%================================================
\subsection{The Completion Map}
\label{subsec:completionmapsetup}
%================================================

The compactness of \(X\) and the global lower metric bounds established
later will imply that every \(d_g\)-Cauchy sequence is also Cauchy with
respect to the smooth background metric \(g_0\). Consequently, every
point of the metric completion has a unique background limit in \(X\).

This will define a canonical map
\begin{equation}
\Phi_g:
\overline{(X^\circ,d_g)}
\longrightarrow
X_g^{\mathrm{acc}},
\label{eq:canonicalcompletionmap}
\end{equation}
which sends a completion class represented by a \(d_g\)-Cauchy
sequence to its unique background limit.

The construction, bijectivity, and topological properties of this map
will be established in Section~\ref{sec:globalcompletion}. In
particular, the normal-collapse results of
Section~\ref{sec:normalcollapse}
will show that different interior approaches to the same accessible
boundary point determine the same completion point.

%================================================
\section{Accessibility Criteria}
\label{sec:accessibility}
%================================================

We now determine precisely which boundary points occur at finite
distance for the product and sum interaction metrics.

Let
\[
p\in F_I^\circ,
\]
where
\[
I=I(p)
\]
is the active index set. Recall the effective weights
\[
A_I=\sum_{i\in I}\alpha_i,
\qquad
M_I=\max_{i\in I}\alpha_i.
\]

The main results are
\[
p\text{ is accessible for }g_\Pi
\quad\Longleftrightarrow\quad
A_I<2,
\]
and
\[
p\text{ is accessible for }g_\Sigma
\quad\Longleftrightarrow\quad
M_I<2.
\]

The sufficiency direction is obtained by constructing an explicit
finite-length diagonal approach. The necessity direction is proved by
a lower bound valid for arbitrary absolutely continuous curves and,
subsequently, for arbitrary Cauchy sequences approaching the boundary.

The estimates do not require the active normal coordinates to approach
zero at comparable rates. In particular, no conical-wedge restriction
is imposed.

%================================================
\subsection{Local Radial Estimates}
\label{subsec:localradialestimates}
%================================================

Fix
\[
p\in F_I^\circ
\]
and choose adapted coordinates
\[
(x,z)
=
(x_1,\ldots,x_k,z)
\]
on a neighbourhood \(U\) of \(p\), where
\[
k=|I|
\]
and
\[
p=(0,\ldots,0,z_0).
\]

Define the active normal radius
\begin{equation}
r(x)
=
\left(
\sum_{i=1}^{k}x_i^2
\right)^{1/2}.
\label{eq:activenormalradius}
\end{equation}

For every \(i\),
\[
0\leq x_i\leq r.
\]

Hence, since all weights are nonnegative,
\begin{equation}
\prod_{i=1}^{k}x_i^{-\alpha_i}
\geq
r^{-A_I}.
\label{eq:productradialweightlower}
\end{equation}

For the sum interaction, choose an index
\[
j\in I
\]
such that
\[
\alpha_j=M_I.
\]
Since
\[
x_j\leq r,
\]
we have
\begin{equation}
\sum_{i=1}^{k}x_i^{-\alpha_i}
\geq
x_j^{-M_I}
\geq
r^{-M_I}.
\label{eq:sumradialweightlower}
\end{equation}

Combining these inequalities with the local metric comparisons from
Section~\ref{sec:setup}, after possibly shrinking \(U\), there exists
a constant \(c>0\) such that
\begin{equation}
g_\Pi
\geq
c\,r^{-A_I}
\left(
|dx|^2+|dz|^2
\right)
\label{eq:productradialmetriclower}
\end{equation}
and
\begin{equation}
g_\Sigma
\geq
c\,r^{-M_I}
\left(
|dx|^2+|dz|^2
\right).
\label{eq:sumradialmetriclower}
\end{equation}

These inequalities hold throughout the adapted neighbourhood and are
uniform with respect to the direction of approach.

Let
\[
\gamma(t)
=
(x(t),z(t))
\]
be an absolutely continuous curve in \(U\cap X^\circ\). Since the
Euclidean norm is Lipschitz,
\[
r(t)=|x(t)|
\]
is absolutely continuous and satisfies
\begin{equation}
|\dot r(t)|
\leq
|\dot x(t)|
\label{eq:radialvelocitybound}
\end{equation}
for almost every \(t\).

Therefore,
\begin{equation}
|\dot\gamma(t)|_{g_\Pi}
\geq
c^{1/2}
r(t)^{-A_I/2}
|\dot r(t)|
\label{eq:productradialspeedlower}
\end{equation}
and
\begin{equation}
|\dot\gamma(t)|_{g_\Sigma}
\geq
c^{1/2}
r(t)^{-M_I/2}
|\dot r(t)|.
\label{eq:sumradialspeedlower}
\end{equation}

%================================================
\subsection{A Weighted Radial Variation Lemma}
\label{subsec:weightedradialvariation}
%================================================

The lower-bound arguments for both interaction laws use the same
one-dimensional estimate. We record it once to avoid repeating a
singular-endpoint chain-rule argument.

\begin{lemma}[Weighted Radial Variation]
\label{lem:weightedradialvariation}
Let \(r:[a,b]\to(0,\infty)\) be absolutely continuous and let
\(\beta\geq0\). Define
\[
H_\beta(s)=
\begin{cases}
\dfrac{s^{1-\beta}}{1-\beta}, & \beta\neq1,\\[6pt]
\log s, & \beta=1.
\end{cases}
\]
Then
\begin{equation}
\int_a^b r(t)^{-\beta}|\dot r(t)|\,dt
\geq
\left|H_\beta(r(b))-H_\beta(r(a))\right|.
\label{eq:weightedradialvariation}
\end{equation}
No monotonicity of \(r\) is required.
\end{lemma}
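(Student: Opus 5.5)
The plan is to apply the chain rule to the composition \(H_\beta\circ r\) and then integrate. Since \(r\) is continuous on the compact interval \([a,b]\) and takes values in \((0,\infty)\), there are constants \(0<m\le r(t)\le M<\infty\) for all \(t\). On \([m,M]\) the function \(H_\beta\) is \(C^1\) with \(H_\beta'(s)=s^{-\beta}\); this holds in both cases \(\beta\neq1\) and \(\beta=1\). In particular \(H_\beta\) is Lipschitz on \([m,M]\). Because the range of \(r\) is bounded away from \(0\), the singularity of \(H_\beta\) at \(s=0\) never enters, and the composition of a Lipschitz (indeed \(C^1\)) function with an absolutely continuous function is again absolutely continuous.

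The key steps are as follows.

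\begin{enumerate}
\item Show that \(h(t)=H_\beta(r(t))\) is absolutely continuous on \([a,b]\). This follows from the Lipschitz bound
\[
|h(t)-h(s)|\le m^{-\beta}|r(t)-r(s)|,
\]
which is the mean value theorem using \(\sup_{[m,M]}s^{-\beta}=m^{-\beta}\) for \(\beta\ge0\).
\item Verify the chain rule \(\dot h(t)=r(t)^{-\beta}\dot r(t)\) for almost every \(t\). At every \(t\) where \(r\) is differentiable (almost every \(t\)), the ordinary chain rule for the \(C^1\) outer function \(H_\beta\) gives this formula.
\item Apply the fundamental theorem of calculus for absolutely continuous functions:
\[
H_\beta(r(b))-H_\beta(r(a))=\int_a^b r(t)^{-\beta}\dot r(t)\,dt.
\]
\item Take absolute values and move them inside the integral, using \(r^{-\beta}>0\):
\[
\left|\int_a^b r^{-\beta}\dot r\,dt\right|\le\int_a^b r^{-\beta}|\dot r|\,dt.
\]
This is exactly \eqref{eq:weightedradialvariation}.
\end{enumerate}

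Monotonicity of \(r\) is never used. Cancellations from back-and-forth motion only decrease the signed integral, and the absolute value inside the integral on the right-hand side dominates it.

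The only point needing care is the chain rule for a composition with an absolutely continuous inner function. I would settle it by the positive lower bound \(m\), which makes \(H_\beta\) globally \(C^1\) with bounded derivative on the relevant range, so no singular-endpoint issue arises. This is precisely the reason the lemma is stated on a closed interval with \(r>0\). In later applications, curves running into the boundary will be handled by applying the lemma on subintervals \([a,b']\) with \(b'\uparrow b\) and passing to the limit.
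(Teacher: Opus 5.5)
Your proposal is correct and matches the paper's proof: the positive lower bound \(m\) on \(r\) makes \(H_\beta\) a \(C^1\) function with bounded derivative on the range of \(r\). Hence \(H_\beta\circ r\) is absolutely continuous, the chain rule and the fundamental theorem of calculus give the signed identity, and the triangle inequality finishes; your explicit Lipschitz constant \(m^{-\beta}\) and a.e. chain-rule check just spell out details the paper states in one line.
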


\begin{proof}
Since \(r\) is continuous and strictly positive on the compact interval
\([a,b]\), there exists \(m>0\) such that \(r(t)\geq m\) for all
\(t\in[a,b]\). Hence \(H_\beta\) is \(C^1\) with bounded derivative on
the compact image \(r([a,b])\subset(0,\infty)\). Therefore
\(H_\beta\circ r\) is absolutely continuous and the chain rule gives
\[
H_\beta(r(b))-H_\beta(r(a))
=
\int_a^b r(t)^{-\beta}\dot r(t)\,dt.
\]
Taking absolute values and applying the triangle inequality yields
\[
\left|H_\beta(r(b))-H_\beta(r(a))\right|
\leq
\int_a^b r(t)^{-\beta}|\dot r(t)|\,dt,
\]
which proves the claim.
\end{proof}

\subsection{Product Interaction}
\label{subsec:productaccessibility}
%================================================

\begin{theorem}[Accessibility Criterion for the Product Interaction]
\label{thm:productaccessibility}

Let
\[
p\in F_I^\circ.
\]
Then the following are equivalent:

\begin{enumerate}
    \item \(p\) is accessible for \(g_\Pi\);

    \item \(p\) is finite-length approachable for \(g_\Pi\);

    \item
    \[
    A_I<2.
    \]
\end{enumerate}

\end{theorem}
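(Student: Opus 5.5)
I will prove the cycle of implications \((3)\Rightarrow(2)\Rightarrow(1)\Rightarrow(3)\).

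The implication \((2)\Rightarrow(1)\) is already contained in Remark~\ref{rem:accessibilityequivalence}: terminal subarcs of a finite-length curve have lengths tending to zero, which produces a \(d_{g_\Pi}\)-Cauchy sequence converging to \(p\) in the background topology.

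For \((3)\Rightarrow(2)\), I work in adapted coordinates \((x,z)\) near \(p=(0,z_0)\). I take the diagonal curve
\[
\gamma(t)=\bigl((1-t)\delta,\ldots,(1-t)\delta,z_0\bigr),\qquad t\in[0,1),
\]
with \(\delta>0\) small enough that \(\gamma\) stays in \(U\cap X^\circ\). By the upper bound in \eqref{eq:localproductcomparison}, the speed satisfies
\[
|\dot\gamma|_{g_\Pi}\le C_\Pi^{1/2}\sqrt{k}\,\delta\,\bigl((1-t)\delta\bigr)^{-A_I/2}.
\]
This is integrable on \([0,1)\) exactly when \(A_I/2<1\), so \(L_{g_\Pi}(\gamma)<\infty\). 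When \(I=\varnothing\) the point is interior and there is nothing to prove. The case \(A_I=0\) is included, since the factor is then bounded.

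The main step is \((1)\Rightarrow(3)\), which I argue by contraposition. Assume \(A_I\ge2\) and let \(p_n\to p\) be a sequence in \(X^\circ\); I must show it is not \(d_{g_\Pi}\)-Cauchy. Fix \(0<r_*<r_1\). I first claim that every point \(q\in U\cap X^\circ\) with \(r(x(q))<r_*\) and \(|z(q)-z_0|<r_1/2\) satisfies
\[
d_{g_\Pi}(q,\partial_*)\ge \Lambda(r(q)),
\qquad
\Lambda(s)=\min\Bigl\{\,c^{1/2}\bigl(H_{A_I/2}(r_*)-H_{A_I/2}(s)\bigr),\ \eta\Bigr\},
\]
where \(\partial_*\) denotes "leaving the region \(\{r\le r_*\}\times\{|z-z_0|<r_1\}\)". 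The constant \(\eta>0\) is the uniform cost of exiting through the tangential side of the cylinder. It comes from the lower bound \eqref{eq:productradialmetriclower} together with \(r^{-A_I}\ge r_*^{-A_I}\ge\) const, using the stay-versus-exit alternative stated after \eqref{eq:localproductcomparison}.

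To prove the claim, take any absolutely continuous curve from \(q\). If it reaches the level \(r=r_*\) before exiting tangentially, I apply Lemma~\ref{lem:weightedradialvariation} with \(\beta=A_I/2\ge1\) and \eqref{eq:productradialspeedlower} on the subarc up to that first hitting time; this gives the first term of the minimum. Otherwise the tangential exit gives \(\eta\). Since \(\beta\ge1\), we have \(H_\beta(s)\to-\infty\) as \(s\downarrow0\), so \(\Lambda(s)\to\eta\) as \(s\to0\).

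Now suppose \(\{p_n\}\) were Cauchy. Choose \(N\) with \(d_{g_\Pi}(p_n,p_m)<\varepsilon\) for \(n,m\ge N\), with \(\varepsilon\ll\eta\). All \(p_m\) eventually lie in the cylinder. Any curve from \(p_N\) to \(p_m\) of length less than \(\eta\) must stay in the region, since leaving costs at least \(\eta\) (this needs \(r(p_N)\) small enough that \(\Lambda(r(p_N))\) is close to \(\eta\); fine, since \(r(p_n)\to0\)). For such a curve, Lemma~\ref{lem:weightedradialvariation} gives length at least
\[
c^{1/2}\,\bigl|H_\beta(r(p_m))-H_\beta(r(p_N))\bigr|.
\]
Because \(r(p_m)\to0\) while \(r(p_N)>0\) is fixed, this tends to infinity, contradicting \(d_{g_\Pi}(p_N,p_m)<\varepsilon\). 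Hence \(p\) is inaccessible.

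The delicate point is the bookkeeping showing that near-minimizing curves cannot profit from leaving the coordinate region. This is why I reduce everything to the uniform exit cost \(\eta\) and restrict Lemma~\ref{lem:weightedradialvariation} to subarcs that stay inside \(U\cap X^\circ\), where \(r>0\). Anisotropic approaches cause no trouble, because \eqref{eq:productradialweightlower} holds uniformly in direction.
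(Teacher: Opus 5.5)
Your proof is correct and follows the paper's route: the diagonal curve with the upper comparison \eqref{eq:localproductcomparison} for \((3)\Rightarrow(2)\), Remark~\ref{rem:accessibilityequivalence} for \((2)\Rightarrow(1)\), and, for necessity, Lemma~\ref{lem:weightedradialvariation} combined with the stay-versus-exit alternative and a uniform tangential exit cost. The only difference is organizational: you fix \(p_N\) deep enough that the exit cost exceeds \(\varepsilon\) and let \(m\to\infty\) using \(H_\beta(r(p_m))\to-\infty\), whereas the paper extracts a greedy subsequence with \(\Phi_\Pi(r_{n+1})-\Phi_\Pi(r_n)\geq1\) and bounds consecutive distances from below.
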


\begin{proof}

We first prove
\[
(3)\Longrightarrow(2).
\]

Assume
\[
A_I<2.
\]

Choose adapted coordinates centred at
\[
p=(0,\ldots,0,z_0).
\]
For sufficiently small \(\varepsilon>0\), consider the diagonal curve
\[
\gamma:(0,\varepsilon]\longrightarrow X^\circ,
\qquad
\gamma(s)
=
(s,\ldots,s,z_0).
\]

The curve converges to \(p\) in the background topology as
\[
s\downarrow0.
\]

Along this curve,
\[
x_i=s
\qquad
\text{for every }i\in I,
\]
and
\[
|\dot x|^2=k.
\]

By the upper local comparison for \(g_\Pi\),
\[
|\dot\gamma(s)|_{g_\Pi}
\leq
C s^{-A_I/2}
\]
for some constant \(C>0\). Hence
\[
L_{g_\Pi}(\gamma)
\leq
C
\int_0^\varepsilon
s^{-A_I/2}\,ds.
\]

Since
\[
A_I<2,
\]
the integral converges. Therefore \(p\) is finite-length approachable.

By Remark~\ref{rem:accessibilityequivalence}, every finite-length
approachable point is accessible. Thus
\[
(2)\Longrightarrow(1).
\]

It remains to prove
\[
(1)\Longrightarrow(3).
\]

Suppose, to the contrary, that
\[
A_I\geq2
\]
and that \(p\) is accessible.

Then there exists a \(d_{g_\Pi}\)-Cauchy sequence
\[
q_n\in X^\circ
\]
such that
\[
q_n\longrightarrow p
\]
in the background topology.

For sufficiently large \(n\), all \(q_n\) lie in the adapted
neighbourhood \(U\). Write
\[
q_n=(x^{(n)},z^{(n)})
\]
and set
\[
r_n
=
|x^{(n)}|.
\]
Then
\[
r_n\longrightarrow0.
\]

Choose a fixed radius
\[
r_*>0
\]
small enough that
\[
\{(x,z)\in U:r(x)\leq r_*\}
\]
lies inside the adapted coordinate neighbourhood.

Since
\[
r_n\to0,
\]
we may choose recursively a subsequence, relabelled again by
\(\{r_n\}\), such that
\[
r_{n+1}<r_n
\]
for every \(n\).

Consider any absolutely continuous curve \(\gamma\) joining \(q_n\) to
\(q_m\), where \(m>n\). If the curve remains in \(U\), the radial speed
estimate gives
\[
L_{g_\Pi}(\gamma)
\geq
c^{1/2}
\int
r^{-A_I/2}|\dot r|\,dt.
\]

By Lemma~\ref{lem:weightedradialvariation}, applied to the radial
coordinate on the curve (no monotonicity is required),
\[
\int
r^{-A_I/2}|\dot r|\,dt
\geq
\left|
\int_{r_m}^{r_n}
s^{-A_I/2}\,ds
\right|.
\]
Therefore
\begin{equation}
L_{g_\Pi}(\gamma)
\geq
c^{1/2}
\int_{r_m}^{r_n}
s^{-A_I/2}\,ds.
\label{eq:productannularlowerbound}
\end{equation}

If instead the curve leaves the coordinate neighbourhood through the
region
\[
r\geq r_*,
\]
then, on the subcurve up to the first crossing of \(r=r_*\),
Lemma~\ref{lem:weightedradialvariation} gives
\begin{equation}
L_{g_\Pi}(\gamma)
\geq
c^{1/2}
\int_{r_n}^{r_*}
s^{-A_I/2}\,ds.
\label{eq:productescape lower}
\end{equation}

We now use the assumption
\[
A_I\geq2.
\]

Define
\[
\Phi_\Pi(r)
=
\int_r^{r_*}
s^{-A_I/2}\,ds.
\]
Then
\[
\Phi_\Pi(r)\longrightarrow+\infty
\qquad
\text{as }r\downarrow0.
\]

Since \(\Phi_\Pi\) is continuous, strictly decreasing as a function
of \(r\), and unbounded as \(r\downarrow0\), the indices may be
chosen greedily: after choosing one term, choose the next far enough
along the sequence that the primitive has increased by at least one.
After relabelling, we may therefore assume
\[
\Phi_\Pi(r_{n+1})
-
\Phi_\Pi(r_n)
\geq1
\]
for every \(n\).

For any curve joining \(q_n\) to \(q_{n+1}\), there are three
possibilities. If the curve remains inside the adapted neighbourhood,
then \eqref{eq:productannularlowerbound} gives
\[
L_{g_\Pi}(\gamma)\geq c^{1/2}.
\]
If it exits through the normal boundary \(r=r_*\), then
\eqref{eq:productescape lower} gives an even larger lower bound for all
sufficiently large \(n\). Finally, if it exits through the fixed
tangential boundary of the product coordinate cylinder, the
stay-versus-exit estimate from
Section~\ref{subsec:localmodelcomparisons} gives a uniform lower bound
\(E_{\mathrm{tan}}>0\), independent of \(n\). Therefore
\[
d_{g_\Pi}(q_n,q_{n+1})
\geq
\min\{c^{1/2},E_{\mathrm{tan}}\}>0
\]
along this subsequence, contradicting the assumption that
\[
\{q_n\}
\]
is \(d_{g_\Pi}\)-Cauchy.

Therefore
\[
A_I<2.
\]

This proves the equivalence of the three conditions.

\end{proof}

\begin{remark}[Anisotropic Approaches]
\label{rem:productanisotropicaccessibility}

The necessity argument does not assume that the active normal
coordinates vanish at comparable rates. The key estimate
\[
\prod_{i\in I}x_i^{-\alpha_i}
\geq
r^{-A_I}
\]
follows solely from
\[
x_i\leq r
\]
and
\[
\alpha_i\geq0.
\]
Thus paths for which, for example,
\[
x_1\ll x_2
\]
are included automatically. No restriction of the form
\[
c\leq\frac{x_i}{x_j}\leq C
\]
is required.

\end{remark}

%================================================
\subsection{Sum Interaction}
\label{subsec:sumaccessibility}
%================================================

\begin{theorem}[Accessibility Criterion for the Sum Interaction]
\label{thm:sumaccessibility}

Let
\[
p\in F_I^\circ.
\]
Then the following are equivalent:

\begin{enumerate}
    \item \(p\) is accessible for \(g_\Sigma\);

    \item \(p\) is finite-length approachable for \(g_\Sigma\);

    \item
    \[
    M_I<2.
    \]
\end{enumerate}

\end{theorem}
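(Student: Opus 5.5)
The proof will follow the three-step scheme used for Theorem~\ref{thm:productaccessibility}, with the effective exponent \(A_I\) replaced by \(M_I\) and the product comparison \eqref{eq:localproductcomparison} replaced by the sum comparison \eqref{eq:localsumcomparison}.

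For \((3)\Rightarrow(2)\), assume \(M_I<2\). In adapted coordinates centred at \(p=(0,\ldots,0,z_0)\), use the same diagonal curve \(\gamma(s)=(s,\ldots,s,z_0)\), \(s\in(0,\varepsilon]\). The upper bound in \eqref{eq:localsumcomparison} gives
\[
|\dot\gamma(s)|_{g_\Sigma}^2\leq C\Bigl(1+\sum_{i\in I}s^{-\alpha_i}\Bigr)k .
\]
For \(s\leq1\) each term satisfies \(s^{-\alpha_i}\leq s^{-M_I}\), and also \(1\leq s^{-M_I}\). Hence \(|\dot\gamma(s)|_{g_\Sigma}\leq C' s^{-M_I/2}\), which is integrable on \((0,\varepsilon]\) because \(M_I<2\). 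The curve therefore has finite \(g_\Sigma\)-length, and \(p\) is finite-length approachable.

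The implication \((2)\Rightarrow(1)\) is exactly Remark~\ref{rem:accessibilityequivalence}.

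For \((1)\Rightarrow(3)\), argue by contradiction. Suppose \(M_I\geq2\) and that \(\{q_n\}\subset X^\circ\) is a \(d_{g_\Sigma}\)-Cauchy sequence converging to \(p\) in the background topology. The basic tool is the direction-uniform lower bound \eqref{eq:sumradialspeedlower},
\[
|\dot\gamma|_{g_\Sigma}\geq c^{1/2}r^{-M_I/2}|\dot r|,
\]
which rests on choosing an index \(j\) with \(\alpha_j=M_I\) and using \(x_j\leq r\), as in \eqref{eq:sumradialweightlower}. Combined with Lemma~\ref{lem:weightedradialvariation} for \(\beta=M_I/2\), it bounds the length of any curve from \(q_n\) to \(q_m\) that stays in \(U\) from below by \(c^{1/2}\int_{r_m}^{r_n}s^{-M_I/2}\,ds\). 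A curve that crosses the level \(r=r_*\) has length at least \(c^{1/2}\int_{r_n}^{r_*}s^{-M_I/2}\,ds\). A curve that leaves through the tangential boundary of the coordinate cylinder costs a uniform amount \(E_{\mathrm{tan}}>0\), by the stay-versus-exit remark in Section~\ref{subsec:localmodelcomparisons}.

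Since \(M_I\geq2\), the primitive \(\Phi_\Sigma(r)=\int_r^{r_*}s^{-M_I/2}\,ds\) diverges as \(r\downarrow0\). We may therefore pass greedily to a subsequence with \(\Phi_\Sigma(r_{n+1})-\Phi_\Sigma(r_n)\geq1\). This gives
\[
d_{g_\Sigma}(q_n,q_{n+1})\geq\min\{c^{1/2},E_{\mathrm{tan}}\}>0,
\]
which contradicts the Cauchy property.

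The one point needing care, and the main (mild) obstacle, is the lower bound for \(g_\Sigma\) near \(p\). In \eqref{eq:localsumcomparison} one must discard the additive \(1\) and all but the maximal active term, \emph{without} assuming any comparability among the \(x_i\). This works because \(x_j^{-M_I}\geq r^{-M_I}\) holds pointwise for the single maximal index \(j\), however anisotropic the approach. The remaining terms are nonnegative, so dropping them only weakens the bound.

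The case \(M_I=2\), where \(\Phi_\Sigma\) is logarithmic, is covered by the \(\beta=1\) branch of Lemma~\ref{lem:weightedradialvariation}. Zero weights cause no difficulty in the sufficiency direction, since the constant term is dominated by \(s^{-M_I}\) for \(s\leq1\). In the necessity direction \(M_I\geq2>0\), so at least one active weight is positive and the maximal term is genuinely singular.
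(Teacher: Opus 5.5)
Your proposal is correct and follows the paper's proof essentially step for step: the diagonal curve with $s^{-\alpha_i}\leq s^{-M_I}$ for $s\leq 1$ gives $(3)\Rightarrow(2)$, Remark~\ref{rem:accessibilityequivalence} gives $(2)\Rightarrow(1)$, and necessity combines the maximal-index radial bound $c^{1/2}r^{-M_I/2}|\dot r|$, Lemma~\ref{lem:weightedradialvariation}, a greedy subsequence with $\Phi_\Sigma(r_{n+1})-\Phi_\Sigma(r_n)\geq 1$, and the stay/normal-exit/tangential-exit alternative to contradict the Cauchy property. Your extra remarks on absorbing the additive constant and on the logarithmic case $M_I=2$ only make explicit what the paper leaves to the comparison constants and to the lemma.
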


\begin{proof}

We first prove
\[
(3)\Longrightarrow(2).
\]

Assume
\[
M_I<2.
\]

Choose the diagonal curve
\[
\gamma(s)
=
(s,\ldots,s,z_0),
\qquad
0<s\leq\varepsilon.
\]

Along this curve,
\[
\sum_{i\in I}s^{-\alpha_i}
\leq
k\,s^{-M_I}
\]
for
\[
0<s\leq1.
\]
The inactive terms remain uniformly bounded and may be absorbed into
the comparison constant.

Therefore
\[
|\dot\gamma(s)|_{g_\Sigma}
\leq
C s^{-M_I/2}
\]
for some constant \(C>0\), and hence
\[
L_{g_\Sigma}(\gamma)
\leq
C
\int_0^\varepsilon
s^{-M_I/2}\,ds.
\]

Since
\[
M_I<2,
\]
the integral converges. Thus \(p\) is finite-length approachable and
therefore accessible.

It remains to prove necessity.

Suppose
\[
M_I\geq2
\]
and assume, for contradiction, that \(p\) is accessible.

Let
\[
q_n=(x^{(n)},z^{(n)})
\]
be a \(d_{g_\Sigma}\)-Cauchy sequence converging to \(p\) in the
background topology, and set
\[
r_n=|x^{(n)}|.
\]
Then
\[
r_n\to0.
\]

From \eqref{eq:sumradialspeedlower}, every absolutely continuous curve
remaining in the adapted neighbourhood satisfies
\[
L_{g_\Sigma}(\gamma)
\geq
c^{1/2}
\int
r^{-M_I/2}|\dot r|\,dt.
\]

Define
\[
\Phi_\Sigma(r)
=
\int_r^{r_*}
s^{-M_I/2}\,ds.
\]
Because
\[
M_I\geq2,
\]
we have
\[
\Phi_\Sigma(r)\longrightarrow+\infty
\qquad
\text{as }r\downarrow0.
\]

Since \(\Phi_\Sigma\) is continuous, strictly decreasing as a
function of \(r\), and unbounded as \(r\downarrow0\), choose the
subsequence greedily so that
\[
\Phi_\Sigma(r_{n+1})-\Phi_\Sigma(r_n)\geq1.
\]
For a curve remaining in the adapted neighbourhood,
Lemma~\ref{lem:weightedradialvariation} gives
\[
L_{g_\Sigma}(\gamma)\geq c^{1/2}.
\]
If the curve exits through the normal boundary \(r=r_*\), applying the
same lemma to the subcurve up to its first crossing of \(r=r_*\)
gives a uniformly positive, and eventually larger, lower bound. If it
instead exits through the fixed tangential boundary of the product
coordinate cylinder, the stay-versus-exit estimate from
Section~\ref{subsec:localmodelcomparisons} gives a uniform lower bound
\(E_{\mathrm{tan}}>0\). Thus
\[
d_{g_\Sigma}(q_n,q_{n+1})
\geq
\min\{c^{1/2},E_{\mathrm{tan}}\}>0,
\]
contradicting the Cauchy property.

Therefore
\[
M_I<2.
\]

\end{proof}

\begin{remark}[Dominant Active Weight]
\label{rem:dominantactiveweight}

The sum criterion depends only on
\[
M_I
=
\max_{i\in I}\alpha_i.
\]
Indeed, if
\[
j\in I
\]
satisfies
\[
\alpha_j=M_I,
\]
then
\[
\sum_{i\in I}x_i^{-\alpha_i}
\geq
x_j^{-M_I}
\geq
r^{-M_I}.
\]
Thus a single active hypersurface with critical or supercritical weight
is sufficient to place the entire open face at infinite metric
distance.

Conversely, if every active weight is subcritical, then
\[
M_I<2,
\]
and the diagonal finite-length approach proves accessibility.

\end{remark}

%================================================
\subsection{Pointwise and Facewise Criteria}
\label{subsec:pointwisefacewiseaccessibility}
%================================================

The preceding theorems immediately give the pointwise classification
\begin{equation}
p\in\partial_{g_\Pi}^{\mathrm{acc}}X
\quad\Longleftrightarrow\quad
A(p)<2
\label{eq:productpointwiseaccessibility}
\end{equation}
and
\begin{equation}
p\in\partial_{g_\Sigma}^{\mathrm{acc}}X
\quad\Longleftrightarrow\quad
M(p)<2.
\label{eq:sumpointwiseaccessibility}
\end{equation}

Since the active index set is constant on an open face
\[
F_I^\circ,
\]
we also obtain
\begin{equation}
F_I^\circ
\text{ is accessible for }g_\Pi
\quad\Longleftrightarrow\quad
A_I<2
\label{eq:productfaceaccessibility}
\end{equation}
and
\begin{equation}
F_I^\circ
\text{ is accessible for }g_\Sigma
\quad\Longleftrightarrow\quad
M_I<2.
\label{eq:sumfaceaccessibility}
\end{equation}

For the sum interaction,
\[
M_I<2
\]
is equivalent to
\[
\alpha_i<2
\qquad
\text{for every }i\in I.
\]

For the product interaction, however,
\[
\alpha_i<2
\qquad
\text{for every }i\in I
\]
does not imply
\[
A_I<2.
\]
This is the source of the different incidence behaviour developed later
in the paper.

%================================================
\subsection{Equivalence of the Two Accessibility Formulations}
\label{subsec:accessibilityformulationequivalence}
%================================================

Theorems~\ref{thm:productaccessibility} and
\ref{thm:sumaccessibility}
also complete the comparison between the two notions introduced in
Section~\ref{sec:setup}.

\begin{corollary}[Accessibility Equals Finite-Length Approachability]
\label{cor:accessibilityequivalence}

For either
\[
g=g_\Pi
\]
or
\[
g=g_\Sigma,
\]
a boundary point
\[
p\in\partial X
\]
is accessible in the sense of
Definition~\ref{def:accessiblepoint}
if and only if it is finite-length approachable.

\end{corollary}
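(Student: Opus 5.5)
The corollary follows by combining the two accessibility theorems with the elementary implication recorded in Remark~\ref{rem:accessibilityequivalence}. Fix \(g\in\{g_\Pi,g_\Sigma\}\) and a boundary point \(p\in\partial X\). Set \(I=I(p)\), which is nonempty because \(p\notin X^\circ\). Then \(p\in F_I^\circ\) by the definition of the open faces, so exactly one of Theorem~\ref{thm:productaccessibility} or Theorem~\ref{thm:sumaccessibility} applies to \(p\).

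The direction ``finite-length approachable \(\Rightarrow\) accessible'' requires no theorem. I would simply cite the argument in Remark~\ref{rem:accessibilityequivalence}: sample the finite-length curve at times \(t_n\uparrow1\). The \(g\)-lengths of the terminal subarcs tend to zero, so the sampled points form a \(d_g\)-Cauchy sequence converging to \(p\) in the background topology.

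For the converse, assume \(p\) is accessible in the sense of Definition~\ref{def:accessiblepoint}. If \(g=g_\Pi\), the implication \((1)\Rightarrow(3)\) of Theorem~\ref{thm:productaccessibility} gives \(A_I<2\), and then \((3)\Rightarrow(2)\) produces the explicit diagonal curve of finite \(g_\Pi\)-length ending at \(p\). If \(g=g_\Sigma\), the same chain in Theorem~\ref{thm:sumaccessibility} passes through \(M_I<2\) and again yields a diagonal curve of finite length. In both cases \(p\) is finite-length approachable.

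The only point needing care is to confirm that the theorems apply to \emph{every} boundary point, including those with zero active weights or deep corners. Both theorems are stated for an arbitrary \(p\in F_I^\circ\), and every boundary point lies in exactly one such open face, so nothing further is needed. The substantive difficulty, namely the necessity lower bound along arbitrary Cauchy sequences, is already handled inside the two theorems, so I expect no genuine obstacle here.
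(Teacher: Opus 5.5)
Your proposal is correct and follows the paper's proof essentially verbatim: the sampling argument of Remark~\ref{rem:accessibilityequivalence} gives ``finite-length approachable \(\Rightarrow\) accessible,'' and for the converse Theorem~\ref{thm:productaccessibility} or Theorem~\ref{thm:sumaccessibility} forces \(A_I<2\) or \(M_I<2\), after which the diagonal curve from the sufficiency part of the same theorem gives a finite-length approach. One phrasing slip: you wrote that ``exactly one of the theorems applies to \(p\)'', but both theorems apply to every boundary point, and which one you invoke is determined by the choice of \(g\), not by \(p\).
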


\begin{proof}

Finite-length approachability implies accessibility by
Remark~\ref{rem:accessibilityequivalence}.

Conversely, if \(p\) is accessible, then
Theorem~\ref{thm:productaccessibility} or
Theorem~\ref{thm:sumaccessibility}, according to the metric under
consideration, implies that the corresponding effective weight is
strictly less than \(2\). The diagonal construction in the sufficiency
part of the same theorem then provides an explicit finite-length curve
approaching \(p\).

\end{proof}

%================================================
\subsection{Critical and Supercritical Regimes}
The threshold \(2\) is sharp. The radial lower bounds give logarithmic
divergence at effective weight \(2\) and polynomial divergence above it.
Thus \(F_I^\circ\) is accessible exactly when \(A_I<2\) for \(g_\Pi\) and
\(M_I<2\) for \(g_\Sigma\).

\section{Collapse of Active Normal Directions}
\label{sec:normalcollapse}
%================================================

Having determined which boundary points occur at finite metric distance,
we now study the local geometry of approaches to an accessible open
face.

The main result of this section is that, over a fixed accessible
background point, the active normal directions collapse to a single
point in the metric completion. Thus different rates of approach in the
normal variables do not produce distinct completion points.

The argument uses adapted corner coordinates and the local model
comparisons established in Section~\ref{sec:setup}. The estimates are
uniform under strongly anisotropic approaches and do not require the
active normal coordinates to vanish at comparable rates.

Throughout this section, fix
\[
p\in F_I^\circ
\]
and choose adapted local coordinates
\[
(x,z)
=
(x_1,\ldots,x_k,z)
\]
on a neighbourhood \(U\) of \(p\), where
\[
k=|I|
\]
and
\[
p=(0,\ldots,0,z_0).
\]

For the product interaction, write
\[
A_I
=
\sum_{i\in I}\alpha_i,
\]
and for the sum interaction write
\[
M_I
=
\max_{i\in I}\alpha_i.
\]

%================================================
\subsection{Shrinking Normal Boxes}
\label{subsec:shrinkingnormalboxes}
%================================================

Fix the tangential point
\[
z_0.
\]

For \(t>0\), define the normal box
\begin{equation}
Q_t(z_0)
=
\left\{
(x,z_0):
0<x_i\leq t
\text{ for every }i\in I
\right\}.
\label{eq:normalbox}
\end{equation}

The set \(Q_t(z_0)\) consists of interior points approaching the same
tangential location through arbitrary configurations of the active
normal coordinates.

We first show that its diameter tends to zero whenever the
corresponding face is accessible.

%================================================
\subsection{Product Interaction}
\label{subsec:productnormalcollapse}
%================================================

\begin{lemma}[Shrinking Normal Diameter for the Product Metric]
\label{lem:productshrinkingnormaldiameter}

Assume
\[
A_I<2.
\]

Then there exist constants
\[
C>0,
\qquad
t_0>0,
\]
such that
\begin{equation}
\operatorname{diam}_{g_\Pi}
Q_t(z_0)
\leq
Ct^{1-A_I/2}
\label{eq:productnormaldiameter}
\end{equation}
for every
\[
0<t<t_0.
\]

In particular,
\[
\operatorname{diam}_{g_\Pi}
Q_t(z_0)
\longrightarrow0
\qquad
\text{as }t\downarrow0.
\]

\end{lemma}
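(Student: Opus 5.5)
Every point of $Q_t(z_0)$ will be joined to the diagonal point $(t,\ldots,t,z_0)$ by a piecewise linear path of $g_\Pi$-length at most $C t^{1-A_I/2}$. The triangle inequality then gives the diameter bound with constant $2C$.

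Fix $t_0$ so that the box $\{0<x_i\le t_0\}\times\{z_0\}$ lies in the adapted neighbourhood $U$. By \eqref{eq:localproductcomparison}, the $g_\Pi$-speed of a curve with $z$ frozen at $z_0$ is at most
\[
C_\Pi^{1/2}\Bigl(\prod_{i\in I}x_i^{-\alpha_i}\Bigr)^{1/2}|\dot x|.
\]
Given $q=(x,z_0)\in Q_t(z_0)$, we raise the normal coordinates to $t$ one at a time. Order the indices $1,\ldots,k$. At step $a$, move $x_a$ linearly from its current value $x_a$ up to $t$, keeping the other coordinates fixed. At that stage the coordinates $x_1,\ldots,x_{a-1}$ already equal $t$, and $x_{a+1},\ldots,x_k$ still have their original values.

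The length of step $a$ is bounded by
\[
C_\Pi^{1/2}\Bigl(\prod_{b<a}t^{-\alpha_b}\prod_{b>a}x_b^{-\alpha_b}\Bigr)^{1/2}\int_{x_a}^{t}s^{-\alpha_a/2}\,ds .
\]
This is where the anisotropy causes trouble. The factors $x_b^{-\alpha_b}$ with $b>a$ are unbounded when $x_b$ is tiny, so this naive ordering fails.

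The remedy is to reverse the order: first raise the coordinate that has the largest value, or more simply first move all coordinates simultaneously. A cleaner construction is the straight segment from $q$ to $(t,\ldots,t,z_0)$, along which each coordinate $x_b$ increases monotonically. I would instead use the ``max-first'' path. Sort the coordinates so that $x_{(1)}\le\cdots\le x_{(k)}$. Raise all coordinates currently equal to the minimum together until they reach the next value, and repeat until every coordinate equals $t$. Along this path, at each moment the coordinates being moved share a common value $s$, and every other coordinate is at least $s$. Hence
\[
\prod_b x_b^{-\alpha_b}\le s^{-A_I}
\]
by nonnegativity of the weights. The speed is then at most $C_\Pi^{1/2}\sqrt{k}\,s^{-A_I/2}|\dot s|$, where $s$ increases monotonically from $\min_b x_b>0$ to $t$. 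Summing over the finitely many stages gives a total length of at most
\[
C_\Pi^{1/2}\sqrt{k}\int_0^t s^{-A_I/2}\,ds=\frac{C_\Pi^{1/2}\sqrt{k}}{1-A_I/2}\,t^{1-A_I/2},
\]
which is finite because $A_I<2$.

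The main obstacle is therefore the anisotropic control of the product weight when some coordinates are much smaller than others. The key observation is that moving the smallest coordinates upward keeps the product bounded by $s^{-A_I}$. The path stays inside $Q_t(z_0)\subset U\cap X^\circ$, so the local comparison applies throughout. Since $d_{g_\Pi}$ is bounded by path length, any two points of $Q_t(z_0)$ are within $2C t^{1-A_I/2}$ of each other. This proves \eqref{eq:productnormaldiameter}, and the diameter tends to zero as $t\downarrow0$.
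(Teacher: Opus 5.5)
Your argument is correct. It differs from the paper's only in the choice of connecting path.

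The paper uses exactly the straight segment you mention and then set aside. With $\gamma(s)=(x+s(t\mathbf 1-x),z_0)$, each coordinate satisfies $\gamma_i(s)=(1-s)x_i+st\ge st$. So the conformal factor is at most $(st)^{-A_I}$, while $|\dot\gamma|\le\sqrt{k}\,t$. This gives length at most $Ct^{1-A_I/2}\int_0^1 s^{-A_I/2}\,ds$. It handles anisotropy in one line, with no sorting or stages. It also carries over verbatim when the tangential variable moves linearly to $z_0$ at the same time, which is how the paper reuses it in Proposition~\ref{prop:shrinkingneighborhoods} and Theorem~\ref{thm:productnormalcollapse}.

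Your water-filling path instead ties both the weight bound and the speed to a single monotone parameter, the current minimum coordinate. This gives the point-dependent bound $C_\Pi^{1/2}\sqrt{k}\int_{\min_b x_b}^{t}s^{-A_I/2}\,ds$, which vanishes at the anchor. It also makes transparent that no coordinate ever moves toward the singular region. To allow tangential motion you would append a segment at height $t$, which costs at most $C_\Pi^{1/2}t^{-A_I/2}|z-z_0|\le Ct^{1-A_I/2}$.

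One expository point: the path you actually build raises the \emph{smallest} coordinates first, so the label ``max-first'' is misleading. Your aside ``first raise the coordinate that has the largest value'' would not cure the difficulty you identified. The small coordinates left behind would still contribute the unbounded factors $x_b^{-\alpha_b}$.
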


\begin{proof}

Fix
\[
q=(x,z_0)\in Q_t(z_0),
\]
and define the diagonal reference point
\[
q_t
=
(t,\ldots,t,z_0).
\]

Consider the straight interpolation
\begin{equation}
\gamma(s)
=
\bigl(
x+s(t\mathbf 1-x),
z_0
\bigr),
\qquad
0\leq s\leq1,
\label{eq:productnormalinterpolation}
\end{equation}
where
\[
\mathbf 1=(1,\ldots,1).
\]

For each active coordinate,
\[
\gamma_i(s)
=
(1-s)x_i+st.
\]

Since
\[
x_i>0,
\]
we have
\begin{equation}
\gamma_i(s)
\geq
st.
\label{eq:interpolationlowerbound}
\end{equation}

Thus
\[
\prod_{i\in I}
\gamma_i(s)^{-\alpha_i}
\leq
(st)^{-A_I}.
\]

Moreover,
\[
|\dot\gamma(s)|
=
|t\mathbf1-x|
\leq
Ct,
\]
where \(C\) depends only on \(k\).

Using the local upper comparison for the product metric,
\[
|\dot\gamma(s)|_{g_\Pi}
\leq
Ct
(st)^{-A_I/2}.
\]

Therefore
\begin{align}
L_{g_\Pi}(\gamma)
&\leq
Ct
\int_0^1
(st)^{-A_I/2}\,ds
\\
&=
Ct^{1-A_I/2}
\int_0^1
s^{-A_I/2}\,ds.
\end{align}

Since
\[
A_I<2,
\]
the integral is finite. Hence
\begin{equation}
d_{g_\Pi}(q,q_t)
\leq
Ct^{1-A_I/2}.
\label{eq:productnormalanchorestimate}
\end{equation}

Now let
\[
q,q'\in Q_t(z_0).
\]

By the triangle inequality,
\[
d_{g_\Pi}(q,q')
\leq
d_{g_\Pi}(q,q_t)
+
d_{g_\Pi}(q_t,q'),
\]
and hence
\[
d_{g_\Pi}(q,q')
\leq
Ct^{1-A_I/2}.
\]

Taking the supremum over all
\[
q,q'\in Q_t(z_0)
\]
proves the result.

\end{proof}

\begin{remark}[Uniformity under Anisotropic Normal Data]
\label{rem:anisotropicnormalcollapse}

The estimate above is uniform even when the initial normal coordinates
are highly anisotropic.

For example, one may have
\[
x_1=t,
\qquad
x_2=t^{10},
\]
or more generally coordinates tending to zero at arbitrarily different
rates.

The key inequality
\[
\gamma_i(s)\geq st
\]
holds for every active coordinate and every starting point in
\(Q_t(z_0)\). Thus the conformal factor is bounded above by the
integrable singularity
\[
(st)^{-A_I},
\]
independently of the relative sizes of the initial coordinates.

In particular, the interpolation path does not move toward a more
singular region. Every normal coordinate is monotone nondecreasing
toward the common depth \(t\).

\end{remark}

\begin{theorem}[Normal Collapse for the Product Interaction]
\label{thm:productnormalcollapse}

Let
\[
p=(0,z_0)\in F_I^\circ
\]
and assume
\[
A_I<2.
\]

Suppose
\[
q_n=(x^{(n)},z^{(n)})
\]
and
\[
q_n'=(y^{(n)},w^{(n)})
\]
are two sequences in \(X^\circ\) satisfying
\[
q_n\longrightarrow p,
\qquad
q_n'\longrightarrow p
\]
in the background topology.

Then
\begin{equation}
d_{g_\Pi}(q_n,q_n')
\longrightarrow0.
\label{eq:productnormalcollapsesequences}
\end{equation}

Consequently, all interior sequences converging to the same accessible
background point determine the same point of the metric completion.

\end{theorem}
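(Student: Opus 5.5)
The plan is to show that for any interior point $q=(x,z)$ close to $p$ there is a curve of small $g_\Pi$-length joining $q$ to the fixed interior anchor point $q_t^{(0)}=(t,\ldots,t,z_0)$, with the length tending to zero as $q\to p$ and $t\to0$. The theorem then follows from the triangle inequality $d_{g_\Pi}(q_n,q_n')\le d_{g_\Pi}(q_n,q_t^{(0)})+d_{g_\Pi}(q_t^{(0)},q_n')$.

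Fix $t\in(0,t_0)$. Since $q_n\to p$ in the background topology, for large $n$ we have $q_n\in U$ with $x^{(n)}_i\le t$ for every $i$ and $|z^{(n)}-z_0|\le t$. We join $q_n$ to the anchor in two steps.

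\textbf{Normal step.} Connect $q_n$ to $(t\mathbf 1,z^{(n)})$ by the straight interpolation of Lemma~\ref{lem:productshrinkingnormaldiameter}, with the tangential variable frozen at $z^{(n)}$ instead of $z_0$. That lemma's proof used only the upper local comparison \eqref{eq:localproductcomparison}, which is uniform over the coordinate cylinder, and the bound $\gamma_i(s)\ge st$. It therefore applies verbatim at the tangential point $z^{(n)}$, with a constant independent of $z^{(n)}$, and gives length at most $Ct^{1-A_I/2}$.

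\textbf{Tangential step.} Move from $(t\mathbf 1,z^{(n)})$ to $(t\mathbf 1,z_0)$ along the straight segment in $z$ at fixed normal depth $x=t\mathbf 1$. There the conformal factor is bounded by $C t^{-A_I}$, so this segment has length at most $C t^{-A_I/2}|z^{(n)}-z_0|$.

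The tangential step is the delicate point, because the factor $t^{-A_I/2}$ blows up as $t\to0$. The fix is to fix $t$ first and then send $n\to\infty$, so that $|z^{(n)}-z_0|\to0$. This gives
\[
\limsup_{n\to\infty} d_{g_\Pi}(q_n,q_t^{(0)})\le Ct^{1-A_I/2}.
\]
The same bound holds for $q_n'$. Therefore $\limsup_n d_{g_\Pi}(q_n,q_n')\le 2Ct^{1-A_I/2}$, and letting $t\downarrow0$ with $A_I<2$ yields \eqref{eq:productnormalcollapsesequences}.

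For the consequence: if $\{q_n\}$ and $\{q_n'\}$ are Cauchy sequences converging to $p$, the distance between them tends to zero, so they are equivalent in the completion. Equivalently, interleave the two sequences to obtain a single Cauchy sequence containing both.

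One must also check that all curves stay inside $U\cap X^\circ$. This holds because every normal coordinate along them is positive and at most $t$, and $z$ stays within $t$ of $z_0$.
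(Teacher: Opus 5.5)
Your proof is correct and is essentially the paper's argument: you anchor at $(t,\ldots,t,z_0)$ and interpolate linearly so that $\gamma_i(s)\ge st$, which dominates the conformal factor by the integrable $(st)^{-A_I}$. The paper moves in $x$ and $z$ simultaneously with $t=t_n$, the largest normal and tangential offset, and gets the explicit bound $d_{g_\Pi}(q_n,q_n')\le Ct_n^{1-A_I/2}$. Your tangential step is not actually delicate: you already imposed $|z^{(n)}-z_0|\le t$, so its length is at most $Ct^{-A_I/2}\cdot t=Ct^{1-A_I/2}$, and the fixed-$t$ double limit can be replaced by taking $t=t_n$ directly.
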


\begin{proof}

Define
\begin{equation}
t_n
=
\max
\left\{
\max_i x_i^{(n)},
\max_i y_i^{(n)},
|z^{(n)}-z_0|,
|w^{(n)}-z_0|
\right\}.
\label{eq:producttn}
\end{equation}

Since both sequences converge to \(p\),
\[
t_n\longrightarrow0.
\]

Define the common reference point
\[
a_n
=
(t_n,\ldots,t_n,z_0).
\]

Consider the interpolation from \(q_n\) to \(a_n\),
\[
\gamma_n(s)
=
\left(
x^{(n)}
+
s(t_n\mathbf1-x^{(n)}),
\,
z^{(n)}
+
s(z_0-z^{(n)})
\right).
\]

For every active coordinate,
\[
\gamma_{n,i}(s)
\geq
st_n.
\]

Moreover, by the definition of \(t_n\),
\[
|\dot\gamma_n(s)|
\leq
Ct_n.
\]

Therefore
\[
L_{g_\Pi}(\gamma_n)
\leq
Ct_n^{1-A_I/2}
\int_0^1
s^{-A_I/2}\,ds,
\]
and hence
\begin{equation}
d_{g_\Pi}(q_n,a_n)
\leq
Ct_n^{1-A_I/2}.
\label{eq:productqnan}
\end{equation}

The same argument gives
\[
d_{g_\Pi}(q_n',a_n)
\leq
Ct_n^{1-A_I/2}.
\]

Thus
\[
d_{g_\Pi}(q_n,q_n')
\leq
Ct_n^{1-A_I/2}.
\]

Since
\[
1-\frac{A_I}{2}>0,
\]
the right-hand side tends to zero.

\end{proof}

%================================================
\subsection{Sum Interaction}
\label{subsec:sumnormalcollapse}
%================================================

\begin{lemma}[Shrinking Normal Diameter for the Sum Metric]
\label{lem:sumshrinkingnormaldiameter}

Assume
\[
M_I<2.
\]

Then there exist constants
\[
C>0,
\qquad
t_0>0,
\]
such that
\begin{equation}
\operatorname{diam}_{g_\Sigma}
Q_t(z_0)
\leq
Ct^{1-M_I/2}
\label{eq:sumnormaldiameter}
\end{equation}
for every
\[
0<t<t_0.
\]

Consequently,
\[
\operatorname{diam}_{g_\Sigma}
Q_t(z_0)
\longrightarrow0
\qquad
\text{as }t\downarrow0.
\]

\end{lemma}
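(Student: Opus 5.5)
We follow the product argument of Lemma~\ref{lem:productshrinkingnormaldiameter} essentially verbatim, replacing the product conformal factor by the local sum comparison \eqref{eq:localsumcomparison}. Fix \(q=(x,z_0)\in Q_t(z_0)\), let \(q_t=(t,\ldots,t,z_0)\), and use the same straight interpolation \(\gamma(s)=(x+s(t\mathbf 1-x),z_0)\), \(0\leq s\leq1\). As before, every active coordinate satisfies \(\gamma_i(s)=(1-s)x_i+st\geq st\). We also have \(|\dot\gamma(s)|=|t\mathbf1-x|\leq\sqrt{k}\,t\), and the curve stays in \(U\cap X^\circ\) once \(t_0\) is chosen small relative to the coordinate cylinder.

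The one new point is bounding the sum factor by a single power. Choose \(t_0\leq1\). Then \(0<st\leq1\), and since \(\alpha_i\leq M_I\) for every \(i\in I\),
\[
1+\sum_{i\in I}\gamma_i(s)^{-\alpha_i}
\leq
1+\sum_{i\in I}(st)^{-\alpha_i}
\leq
(k+1)(st)^{-M_I}.
\]
This uses that \(u\mapsto u^{-\alpha}\) is nonincreasing for \(\alpha\geq0\), together with \(u^{-\alpha_i}\leq u^{-M_I}\) and \(1\leq u^{-M_I}\) for \(0<u\leq1\). The inequalities remain valid when some active weights vanish, and even when \(M_I=0\), which is why the additive constant in \eqref{eq:localsumcomparison} is harmless.

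The upper comparison then gives
\[
|\dot\gamma(s)|_{g_\Sigma}\leq C\,t\,(st)^{-M_I/2},
\]
and therefore
\[
d_{g_\Sigma}(q,q_t)\leq L_{g_\Sigma}(\gamma)\leq C\,t^{1-M_I/2}\int_0^1 s^{-M_I/2}\,ds,
\]
which is finite because \(M_I<2\). For \(q,q'\in Q_t(z_0)\), the triangle inequality through the common anchor \(q_t\) yields \(d_{g_\Sigma}(q,q')\leq 2C\,t^{1-M_I/2}\). Taking the supremum over \(q,q'\) gives \eqref{eq:sumnormaldiameter}, and the exponent \(1-M_I/2\) is positive, so the diameter tends to zero.

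The only delicate step is the bound on the sum factor above. With unequal weights, the sum \(\sum_i (st)^{-\alpha_i}\) is not homogeneous, so the dominant term must be extracted explicitly. The restriction \(t<t_0\leq 1\), which keeps \(st\leq1\), is exactly what makes this possible, and it also absorbs the inactive terms and the constant \(1\) into the comparison constant. Uniformity in anisotropic starting data comes, as in Remark~\ref{rem:anisotropicnormalcollapse}, from the coordinate-free bound \(\gamma_i(s)\geq st\).
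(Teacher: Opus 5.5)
Your proposal is correct and follows the paper's proof essentially step for step. It uses the same straight interpolation to the diagonal anchor \(q_t\), the bound \(\gamma_i(s)\geq st\), the restriction \(st\leq1\) to dominate each term by \((st)^{-M_I}\), and the triangle inequality through \(q_t\); your explicit treatment of the additive constant \(1\) (the factor \(k+1\)) is just a slightly more careful version of the paper's remark that the bounded inactive contribution is absorbed into the constant.
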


\begin{proof}

Let
\[
q=(x,z_0)\in Q_t(z_0),
\]
and let
\[
q_t
=
(t,\ldots,t,z_0).
\]

Use the interpolation path
\[
\gamma(s)
=
\bigl(
x+s(t\mathbf1-x),
z_0
\bigr).
\]

As before,
\[
\gamma_i(s)\geq st.
\]

For sufficiently small \(t\),
\[
0<st\leq1.
\]

Since
\[
\alpha_i\leq M_I,
\]
we have
\[
\gamma_i(s)^{-\alpha_i}
\leq
(st)^{-M_I}.
\]

Therefore
\[
\sum_{i\in I}
\gamma_i(s)^{-\alpha_i}
\leq
k(st)^{-M_I}.
\]

The bounded inactive contribution is absorbed into the constant.

Since
\[
|\dot\gamma(s)|
\leq
Ct,
\]
the local upper comparison gives
\[
|\dot\gamma(s)|_{g_\Sigma}
\leq
Ct(st)^{-M_I/2}.
\]

Hence
\[
L_{g_\Sigma}(\gamma)
\leq
Ct^{1-M_I/2}
\int_0^1
s^{-M_I/2}\,ds.
\]

Because
\[
M_I<2,
\]
the integral is finite. Thus
\[
d_{g_\Sigma}(q,q_t)
\leq
Ct^{1-M_I/2}.
\]

The triangle inequality through \(q_t\) gives
\[
\operatorname{diam}_{g_\Sigma}
Q_t(z_0)
\leq
Ct^{1-M_I/2}.
\]

\end{proof}

\begin{theorem}[Normal Collapse for the Sum Interaction]
\label{thm:sumnormalcollapse}

Let
\[
p=(0,z_0)\in F_I^\circ
\]
and assume
\[
M_I<2.
\]

If two interior sequences
\[
q_n\longrightarrow p,
\qquad
q_n'\longrightarrow p
\]
converge to \(p\) in the background topology, then
\begin{equation}
d_{g_\Sigma}(q_n,q_n')
\longrightarrow0.
\label{eq:sumnormalcollapsesequences}
\end{equation}

Consequently, all interior sequences converging to the same accessible
background point determine the same point of the metric completion.

\end{theorem}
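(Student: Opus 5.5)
The plan is to follow the proof of Theorem~\ref{thm:productnormalcollapse} verbatim, replacing the product conformal factor by the sum factor and the exponent \(A_I\) by \(M_I\). Work in the adapted coordinates on \(U\) centred at \(p=(0,z_0)\), write \(q_n=(x^{(n)},z^{(n)})\) and \(q_n'=(y^{(n)},w^{(n)})\) for all large \(n\), and set
\[
t_n=\max\Bigl\{\max_i x_i^{(n)},\ \max_i y_i^{(n)},\ |z^{(n)}-z_0|,\ |w^{(n)}-z_0|\Bigr\}.
\]
Then \(t_n\to0\), and for large \(n\) we have \(t_n\leq\min\{1,t_0\}\). In addition, the segments used below lie in \(U\), since \(U\) is a product cylinder and all coordinates stay within \(t_n\) of \(p\). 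Introduce the common anchor \(a_n=(t_n,\ldots,t_n,z_0)\).

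The key step is to join \(q_n\) to \(a_n\) by the straight interpolation
\[
\gamma_n(s)=\bigl(x^{(n)}+s(t_n\mathbf1-x^{(n)}),\,z^{(n)}+s(z_0-z^{(n)})\bigr),\qquad 0\leq s\leq1.
\]
For it, check three things.
\begin{enumerate}
\item Each active coordinate satisfies \(\gamma_{n,i}(s)\geq st_n\), exactly as in \eqref{eq:interpolationlowerbound}, since \(x_i^{(n)}>0\). Also \(\gamma_n(s)\in X^\circ\) for \(s>0\); the endpoint \(s=0\) is \(q_n\) itself, which is interior.
\item The Euclidean speed satisfies \(|\dot\gamma_n|\leq Ct_n\) with \(C\) depending only on \(k\), by the definition of \(t_n\).
\item Since \(st_n\leq1\) and \(\alpha_i\leq M_I\), we get \(\sum_{i\in I}\gamma_{n,i}(s)^{-\alpha_i}\leq k(st_n)^{-M_I}\). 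The additive constant in \eqref{eq:localsumcomparison} (which also covers active weights equal to zero, and the inactive terms) is bounded by \((st_n)^{-M_I}\) because \(st_n\leq1\) and \(M_I\geq0\).
\end{enumerate}
The upper comparison \eqref{eq:localsumcomparison} then gives \(|\dot\gamma_n(s)|_{g_\Sigma}\leq Ct_n(st_n)^{-M_I/2}\). Integrating,
\[
d_{g_\Sigma}(q_n,a_n)\leq L_{g_\Sigma}(\gamma_n)\leq Ct_n^{1-M_I/2}\int_0^1 s^{-M_I/2}\,ds,
\]
and the integral is finite because \(M_I<2\). The same estimate holds for \(d_{g_\Sigma}(q_n',a_n)\), so the triangle inequality yields \(d_{g_\Sigma}(q_n,q_n')\leq Ct_n^{1-M_I/2}\to0\). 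This proves \eqref{eq:sumnormalcollapsesequences}.

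For the consequence about completion points, let two interior sequences \(q_n,q_n'\to p\) be \(d_{g_\Sigma}\)-Cauchy. The estimate just proved shows that the interleaved sequence \(q_1,q_1',q_2,q_2',\ldots\) is again Cauchy, because \(d_{g_\Sigma}(q_n,q_n')\to0\). Hence the two sequences are equivalent and define the same point of \(\overline{(X^\circ,d_{g_\Sigma})}\).

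The only place needing care is the degenerate case \(M_I=0\), where the singular sum no longer dominates the constant; the bound in item 3 uses \(st_n\leq1\) precisely to handle it. Apart from this, the step mirrors the product case: the one point where the product proof used \(\prod\gamma_{n,i}^{-\alpha_i}\leq(st_n)^{-A_I}\) is replaced by the termwise bound \(\gamma_{n,i}^{-\alpha_i}\leq(st_n)^{-M_I}\). I therefore expect no genuine obstacle.
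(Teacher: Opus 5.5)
Your proposal is correct and follows the paper's proof: the same \(t_n\), the same diagonal anchor \(a_n=(t_n,\ldots,t_n,z_0)\), the same straight interpolation with the termwise bound \(\gamma_{n,i}(s)^{-\alpha_i}\leq(st_n)^{-M_I}\), and the triangle inequality. Your explicit use of \(st_n\leq1\) to dominate the additive constant (including the case \(M_I=0\)) is a spelled-out version of the paper's remark that the bounded contribution is absorbed into the constant.
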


\begin{proof}

Write
\[
q_n=(x^{(n)},z^{(n)}),
\qquad
q_n'=(y^{(n)},w^{(n)}),
\]
and define
\[
t_n
=
\max
\left\{
\max_i x_i^{(n)},
\max_i y_i^{(n)},
|z^{(n)}-z_0|,
|w^{(n)}-z_0|
\right\}.
\]

Then
\[
t_n\longrightarrow0.
\]

Let
\[
a_n
=
(t_n,\ldots,t_n,z_0).
\]

Using the same interpolation argument as in
Lemma~\ref{lem:sumshrinkingnormaldiameter}, now allowing tangential
variation of size at most \(t_n\), we obtain
\[
d_{g_\Sigma}(q_n,a_n)
\leq
Ct_n^{1-M_I/2}
\]
and
\[
d_{g_\Sigma}(q_n',a_n)
\leq
Ct_n^{1-M_I/2}.
\]

Hence
\[
d_{g_\Sigma}(q_n,q_n')
\leq
Ct_n^{1-M_I/2}.
\]

Since
\[
1-\frac{M_I}{2}>0,
\]
the right-hand side tends to zero.

\end{proof}

%================================================
\subsection{Shrinking Adapted Neighbourhoods}
\label{subsec:shrinkingadaptedneighbourhoods}
%================================================

For the global completion theorem, it is useful to strengthen the
normal-box estimates by allowing tangential motion.

Let
\[
p=(0,z_0)\in F_I^\circ.
\]

Choose \(t_0>0\) so small that \(\overline{U_{2t_0}(p)}\) is
contained in the fixed adapted coordinate neighbourhood \(U\) on
which the local metric comparisons hold. For \(0<t<t_0\), define
\begin{equation}
U_t(p)
=
\left\{
(x,z)\in U\cap X^\circ:
0<x_i\leq t
\text{ for every }i\in I,
\quad
|z-z_0|\leq t
\right\}.
\label{eq:shrinkingadaptedneighbourhood}
\end{equation}

\begin{proposition}[Shrinking Diameter of Accessible Interior Neighbourhoods]
\label{prop:shrinkingneighborhoods}

If
\[
A_I<2,
\]
then
\begin{equation}
\operatorname{diam}_{g_\Pi}
U_t(p)
\leq
Ct^{1-A_I/2}.
\label{eq:productshrinkingneighbourhood}
\end{equation}

If
\[
M_I<2,
\]
then
\begin{equation}
\operatorname{diam}_{g_\Sigma}
U_t(p)
\leq
Ct^{1-M_I/2}.
\label{eq:sumshrinkingneighbourhood}
\end{equation}

In both cases, the diameter tends to zero as
\[
t\downarrow0.
\]

\end{proposition}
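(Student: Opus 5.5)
The plan is to repeat the anchor-point argument from the proof of Theorem~\ref{thm:productnormalcollapse}, now with $t$ held fixed instead of $t_n$. For $0<t<t_0$ we use the common reference point
\[
a_t=(t,\ldots,t,z_0)\in U_t(p).
\]
It then suffices to prove $d_g(q,a_t)\leq Ct^{1-A_I/2}$ (respectively $Ct^{1-M_I/2}$) for every $q\in U_t(p)$, with $C$ independent of $q$ and $t$. The triangle inequality through $a_t$ then bounds the diameter by twice this quantity.

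For $q=(x,z)\in U_t(p)$ we join $q$ to $a_t$ by the straight interpolation
\[
\gamma(s)=\bigl(x+s(t\mathbf 1-x),\,z+s(z_0-z)\bigr),\qquad 0\leq s\leq 1.
\]
First we check that $\gamma$ stays in the region where the comparisons \eqref{eq:localproductcomparison} and \eqref{eq:localsumcomparison} hold. Each active coordinate $\gamma_i(s)=(1-s)x_i+st$ lies in $(0,t]$. The tangential component lies on the segment from $z$ to $z_0$, so $|\gamma_z(s)-z_0|\leq t$. Hence $\gamma([0,1])\subset \overline{U_{t}(p)}\cap X^\circ\subset U$ by the choice of $t_0$. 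Strictly, the endpoint $s=0$ with some $x_i$ small is still interior, since $x_i>0$. Next, $\gamma_i(s)\geq st$ for all $i$, as in \eqref{eq:interpolationlowerbound}. Also $|\dot\gamma(s)|\leq |t\mathbf 1-x|+|z_0-z|\leq (\sqrt k+1)t$, since $0<x_i\leq t$ and $|z-z_0|\leq t$.

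In the product case the upper comparison gives
\[
|\dot\gamma(s)|_{g_\Pi}\leq C\,t\,(st)^{-A_I/2},
\]
because $\prod_i\gamma_i^{-\alpha_i}\leq (st)^{-A_I}$ with all $\alpha_i\geq 0$. Integrating yields
\[
L_{g_\Pi}(\gamma)\leq Ct^{1-A_I/2}\int_0^1 s^{-A_I/2}\,ds,
\]
which is finite since $A_I<2$. In the sum case we shrink $t_0\leq 1$, so that $st\leq 1$. Then $1+\sum_i\gamma_i^{-\alpha_i}\leq (k+1)(st)^{-M_I}$, using $\alpha_i\leq M_I$ and, for the additive constant, $(st)^{-M_I}\geq 1$. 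The same integration gives $L_{g_\Sigma}(\gamma)\leq Ct^{1-M_I/2}$. The constants depend only on $k$, $A_I$ or $M_I$, and the local comparison constants $C_\Pi,C_\Sigma$, and not on $q$ or $t$.

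The main point needing care is the containment step. The comparison inequalities are only available on the fixed neighbourhood $U$, so the path must not leave it. This is ensured because $U_t(p)$ is convex in the adapted coordinates, being a box in $x$ times a ball in $z$, and the choice of $t_0$ places $\overline{U_{2t_0}(p)}$ inside $U$. A second point is that the singularity at $s=0$ sits at the anchor end in the estimate, not at $q$. This is harmless: the bound $\gamma_i(s)\geq st$ is used only for integrability, and the actual path is smooth and stays interior for $s\in[0,1]$. The last step is to let $t\downarrow 0$. Since $1-A_I/2>0$ and $1-M_I/2>0$, both diameters tend to zero.
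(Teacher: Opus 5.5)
Your proposal is correct and is essentially the paper's own argument: the same anchor point $(t,\ldots,t,z_0)$, the same straight-line interpolation with $\gamma_i(s)\geq st$ and $|\dot\gamma(s)|\leq Ct$, and the same triangle inequality. Your containment check and your treatment of the additive constant in the sum case (via $t_0\leq 1$) spell out details the paper leaves implicit. One harmless slip: since $\gamma(0)=q$, the bound $\gamma_i(s)\geq st$ degenerates at $s=0$ at the $q$ end, not the anchor end, but the majorant $(st)^{-A_I/2}$ (resp.\ $(st)^{-M_I/2}$) is integrable either way, so the argument is unaffected.
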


\begin{proof}

Choose the reference point
\[
q_t
=
(t,\ldots,t,z_0).
\]

For any
\[
q=(x,z)\in U_t(p),
\]
consider
\begin{equation}
\gamma(s)
=
\left(
x+s(t\mathbf1-x),
\,
z+s(z_0-z)
\right).
\label{eq:shrinkingneighbourhoodpath}
\end{equation}

For every active coordinate,
\[
\gamma_i(s)\geq st,
\]
while
\[
|\dot\gamma(s)|
\leq
Ct.
\]

Therefore, for the product interaction,
\[
L_{g_\Pi}(\gamma)
\leq
Ct^{1-A_I/2}
\int_0^1
s^{-A_I/2}\,ds,
\]
and for the sum interaction,
\[
L_{g_\Sigma}(\gamma)
\leq
Ct^{1-M_I/2}
\int_0^1
s^{-M_I/2}\,ds.
\]

The integrals converge under the corresponding accessibility
conditions.

Thus every point of \(U_t(p)\) lies within the stated distance of
\(q_t\). The triangle inequality then gives the diameter estimates.

\end{proof}

%================================================
\subsection{Cross-Stratum Shrinking Neighbourhoods}
\label{subsec:crossstratumshrinking}
%================================================

The preceding proposition is stated for interior points. The global
completion argument will also require control of nearby accessible
points lying on lower-codimension open faces whose closures meet
\(F_I^\circ\).

We now record that extension explicitly.

\begin{lemma}[Cross-Stratum Shrinking Estimate]
\label{lem:crossstratumshrinking}

Let
\[
p\in F_I^\circ
\]
be accessible for the interaction under consideration.

Then there exist constants
\[
C>0,
\qquad
t_0>0,
\]
such that for every
\[
0<t<t_0,
\]
the following holds.

Let \(q\) and \(q'\) be accessible points of \(X\) whose background
coordinates lie in the closure of the adapted region
\[
\overline{U_t(p)}.
\]

Then the corresponding completion points satisfy
\begin{equation}
d_{\overline X}(q,q')
\leq
Ct^\delta,
\label{eq:crossstratumdiameter}
\end{equation}
where
\[
\delta
=
1-\frac{A_I}{2}
\]
for the product interaction and
\[
\delta
=
1-\frac{M_I}{2}
\]
for the sum interaction.

Consequently,
\[
\operatorname{diam}_{\overline X}
\left(
X^{\mathrm{acc}}
\cap
\overline{U_t(p)}
\right)
\longrightarrow0
\qquad
\text{as }t\downarrow0.
\]

\end{lemma}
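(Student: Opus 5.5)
The plan is to reduce the statement to Proposition~\ref{prop:shrinkingneighborhoods} by approximating each accessible point of \(\overline{U_t(p)}\) by interior points of \(U_t(p)\), and then passing to the limit in the completion. Completion points are realised by their background limits, so \(d_{\overline X}(q,q')\) is the limit of \(d_g(q_n,q_n')\) for any interior sequences \(q_n\to q\), \(q_n'\to q'\) that are \(d_g\)-Cauchy. By Theorems~\ref{thm:productnormalcollapse} and \ref{thm:sumnormalcollapse}, any such sequence converging in the background to an accessible point defines that point's completion class. So it suffices to produce, for each accessible \(q\in\overline{U_t(p)}\), a \(d_g\)-Cauchy interior sequence \(q_n\to q\) with \(q_n\in U_{t'}(p)\) for some fixed \(t'\) comparable to \(t\), say \(t'=2t\). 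This is admissible because \(\overline{U_{2t_0}(p)}\subset U\) by the choice of \(t_0\).

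\textbf{Step 1 (interior case).} If \(q,q'\in X^\circ\cap\overline{U_t(p)}\), then \(q,q'\in U_t(p)\), since the only closure points missing from \(U_t(p)\) are those with some \(x_i=0\). The bound is then Proposition~\ref{prop:shrinkingneighborhoods} directly.

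\textbf{Step 2 (boundary points).} Let \(q=(\bar x,\bar z)\) be accessible with \(\bar x_i\in[0,t]\) and \(|\bar z-z_0|\le t\). Set \(q_n=(\bar x+\tfrac1n\mathbf 1,\bar z)\). These points lie in \(X^\circ\cap U_{2t}(p)\) for \(n\ge 1/t\) and converge to \(q\) in the background topology. To see that \(\{q_n\}\) is \(d_g\)-Cauchy, apply the straight interpolation estimate of Proposition~\ref{prop:shrinkingneighborhoods}, but now at \(q\) itself, which lies in \(F_J^\circ\) for some \(J\subseteq I\). The active coordinates of \(q\) are exactly the \(x_i\) with \(\bar x_i=0\); coordinates with \(\bar x_i>0\) are bounded below near \(q\). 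Accessibility of \(q\) gives \(A_J<2\) (respectively \(M_J<2\)) by Theorems~\ref{thm:productaccessibility} and \ref{thm:sumaccessibility}. Then \(d_g(q_n,q_m)\to0\) follows from the local diameter bound at \(q\). Alternatively, and more cheaply, note that \(A_J\le A_I<2\) and \(M_J\le M_I<2\) because the weights are nonnegative. The Cauchy property then also follows from the estimate below with \(s\)-integration, restricted to small scales. Normal collapse at \(q\) identifies the limit class with the completion point \(q\).

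\textbf{Step 3 (uniform bound).} For each \(n\), both \(q_n\) and \(q_n'\) lie in \(U_{2t}(p)\). Proposition~\ref{prop:shrinkingneighborhoods} gives \(d_g(q_n,q_n')\le C(2t)^{\delta}\). Letting \(n\to\infty\) yields \(d_{\overline X}(q,q')\le C2^{\delta}t^{\delta}\), which is the claimed estimate with a new constant. The diameter statement follows immediately since \(\delta>0\).

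The main obstacle is Step 2: one must make sure the completion point attached to a lower-stratum accessible \(q\) is the limit of the particular sequence \(q_n\) inside \(U_{2t}(p)\), uniformly in \(t\). To settle this, I would directly verify the Cauchy property in the coordinates of \(p\), using monotonicity of the factors. Along the segment from \(q_n\) to \(q_m\), every active coordinate is at least \(\bar x_i+\tfrac1m\). The conformal factor is therefore bounded by the \(p\)-chart upper comparison evaluated at \(\bar x_i+\tfrac1m\). Combining this with the interpolation integral \(\int_0^1 (s\,\tfrac1n)^{-\delta'}\,ds\)-type bounds, where \(\delta'=A_I/2\) or \(M_I/2\) as before, gives \(d_g(q_n,q_m)\lesssim n^{-\delta}\). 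This avoids needing separate charts at \(q\).
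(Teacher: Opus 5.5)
Your proposal is correct and follows the paper's proof essentially verbatim: you perturb the vanishing active coordinates of \(q,q'\) by \(\varepsilon_n\) to obtain interior points of \(U_{2t}(p)\), apply Proposition~\ref{prop:shrinkingneighborhoods} there to get the bound \(C(2t)^{\delta}\), and pass to the limit in the completion. The only difference is in identifying the completion classes. You use the normal-collapse theorems together with a direct Cauchy check, noting that \(A_J\le A_I\) and \(M_J\le M_I\). The paper instead cites Proposition~\ref{prop:existenceuniquenessaccessible}, which packages the same two facts.
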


\begin{proof}

The identification of an accessible background point with the unique
completion class represented by any interior sequence converging to it
is supplied by Proposition~\ref{prop:existenceuniquenessaccessible}
below. That proposition depends only on
Proposition~\ref{prop:shrinkingneighborhoods}, so this forward
reference is not circular.

We give the product case; the sum case is identical with \(A_I\)
replaced by \(M_I\).

Fix
\[
q,q'
\in
X^{\mathrm{acc}}
\cap
\overline{U_t(p)}.
\]
Write their adapted coordinates as
\[
q=(x,z),
\qquad
q'=(x',z'),
\]
where
\[
0\leq x_i,x_i'\leq t
\qquad
(i\in I),
\]
and
\[
|z-z_0|,
\ |z'-z_0|
\leq t.
\]
After shrinking the adapted chart around \(p\), all hypersurfaces not
indexed by \(I\) remain disjoint from this coordinate neighbourhood.
Thus making every active normal coordinate strictly positive produces
an interior point of \(X\).

Choose a sequence
\[
\varepsilon_n\downarrow0
\]
with
\[
0<\varepsilon_n<t.
\]
Define
\[
q_n
=
(x^{(n)},z),
\qquad
x_i^{(n)}
=
x_i+\varepsilon_n,
\]
and similarly
\[
q_n'
=
(x'^{(n)},z'),
\qquad
x_i'^{(n)}
=
x_i'+\varepsilon_n.
\]
Then
\[
q_n,q_n'\in X^\circ,
\qquad
q_n\longrightarrow q,
\qquad
q_n'\longrightarrow q'
\]
in the background topology. Moreover,
\[
0<x_i^{(n)},x_i'^{(n)}\leq2t,
\]
and
\[
|z-z_0|,
\ |z'-z_0|
\leq t\leq2t.
\]
Hence
\[
q_n,q_n'\in U_{2t}(p)
\]
for every \(n\).

By Proposition~\ref{prop:shrinkingneighborhoods},
\[
d_{g_\Pi}(q_n,q_n')
\leq
C(2t)^{1-A_I/2}.
\]
By Proposition~\ref{prop:existenceuniquenessaccessible}, because
\(q\) and \(q'\) are accessible, the sequences \(\{q_n\}\) and
\(\{q_n'\}\) represent the unique completion points corresponding
respectively to \(q\) and \(q'\). Passing to the metric completion
therefore gives
\[
d_{\overline X}(q,q')
\leq
C(2t)^{1-A_I/2}.
\]
Absorbing the fixed factor
\[
2^{1-A_I/2}
\]
into the constant \(C\), we obtain
\[
d_{\overline X}(q,q')
\leq
Ct^{1-A_I/2}.
\]

For the sum interaction, the same construction and
Proposition~\ref{prop:shrinkingneighborhoods} give
\[
d_{\overline X}(q,q')
\leq
C(2t)^{1-M_I/2}
\leq
Ct^{1-M_I/2}.
\]

The construction is independent of the open strata containing \(q\)
and \(q'\): coordinates corresponding to hypersurfaces that are not
active at \(q\) or \(q'\) may already be positive, while vanishing
active coordinates are perturbed by \(\varepsilon_n\). Thus the same
estimate holds uniformly across all accessible strata meeting the
adapted neighbourhood.

Taking the supremum over
\[
q,q'
\in
X^{\mathrm{acc}}
\cap
\overline{U_t(p)}
\]
proves the diameter statement.

\end{proof}

%================================================
\section{Global Metric Completion}
\label{sec:globalcompletion}
%================================================

We now pass from the local accessibility and collapse results to a
global description of the metric completion.

The key observation is that compactness of \(X\) gives global positive
lower bounds for both singular conformal metrics relative to the smooth
background metric \(g_0\). Consequently, every Cauchy sequence for
either singular metric is also Cauchy with respect to \(d_{g_0}\), and
therefore converges to a unique point of \(X\); we use only the
standard characterization of metric completion via Cauchy sequences,
as in \cite{BBI2001}.

The accessibility criteria of Section~\ref{sec:accessibility} exclude
critical and supercritical background limits, while the collapse and
shrinking-neighbourhood results of
Section~\ref{sec:normalcollapse}
show that all admissible approaches to the same accessible point
represent the same completion point.

The cross-stratum shrinking estimate will also allow us to identify the
topology of the metric completion with the subspace topology of the
accessible subset of \(X\).

Throughout this section,
\[
g\in\{g_\Pi,g_\Sigma\}
\]
denotes one of the two interaction metrics.

%================================================
\subsection{Global Comparison with the Background Metric}
\label{subsec:globalbackgroundcomparison}
%================================================

Recall that
\[
g_\Pi
=
\left(
\prod_{i=1}^{N}
\rho_i^{-\alpha_i}
\right)
g_0
\]
and
\[
g_\Sigma
=
\left(
\sum_{i=1}^{N}
\rho_i^{-\alpha_i}
\right)
g_0.
\]

Since \(X\) is compact, every defining function \(\rho_i\) is bounded
above. Choose
\[
R_i>0
\]
such that
\[
0\leq\rho_i\leq R_i
\qquad
\text{on }X.
\]

\begin{lemma}[Global Lower Metric Bounds]
\label{lem:globallowerbound}

There exist constants
\[
c_\Pi>0,
\qquad
c_\Sigma>0,
\]
such that
\begin{equation}
g_\Pi
\geq
c_\Pi g_0
\label{eq:globalproductlower}
\end{equation}
and
\begin{equation}
g_\Sigma
\geq
c_\Sigma g_0
\label{eq:globalsumlower}
\end{equation}
on \(X^\circ\).

Consequently,
\begin{equation}
d_{g_\Pi}(p,q)
\geq
\sqrt{c_\Pi}\,
d_{g_0}(p,q)
\label{eq:globalproductdistancelower}
\end{equation}
and
\begin{equation}
d_{g_\Sigma}(p,q)
\geq
\sqrt{c_\Sigma}\,
d_{g_0}(p,q)
\label{eq:globalsumdistancelower}
\end{equation}
for all
\[
p,q\in X^\circ.
\]

\end{lemma}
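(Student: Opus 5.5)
Since \(X\) is compact and each \(\rho_i\) is continuous, we have \(0\le\rho_i\le R_i\) on \(X\), and the weights are nonnegative. The pointwise lower bounds therefore come directly from monotonicity of \(s\mapsto s^{-\alpha}\) on \((0,\infty)\) for \(\alpha\ge0\). On \(X^\circ\) every \(\rho_i\) is strictly positive, so \(\rho_i^{-\alpha_i}\ge R_i^{-\alpha_i}\). For the product interaction we multiply these bounds:
\[
\prod_{i=1}^N\rho_i^{-\alpha_i}\ \ge\ \prod_{i=1}^N R_i^{-\alpha_i}=:c_\Pi>0 .
\]
For the sum interaction, each term is nonnegative, so we may discard all but one term (say \(i=1\)), or keep all of them:
\[
\sum_{i=1}^N\rho_i^{-\alpha_i}\ \ge\ \sum_{i=1}^N R_i^{-\alpha_i}=:c_\Sigma>0 .
\]
Since \(g_0\) is positive definite, these pointwise inequalities give \eqref{eq:globalproductlower} and \eqref{eq:globalsumlower} as quadratic forms on \(X^\circ\). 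If \(N=0\) the factors are \(1\) and \(0\), which is degenerate for the sum metric, but \(\partial X\ne\varnothing\) is implicit throughout, so this case does not arise.

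For the distance statements, let \(\gamma\) be any absolutely continuous curve in \(X^\circ\) joining \(p\) to \(q\). The form inequality gives \(|\dot\gamma|_g\ge\sqrt{c}\,|\dot\gamma|_{g_0}\) almost everywhere, hence \(L_g(\gamma)\ge\sqrt c\,L_{g_0}(\gamma)\ge\sqrt c\,d_{g_0}(p,q)\). Here \(d_{g_0}\) is the background distance on \(X\), and a curve in \(X^\circ\) is in particular a curve in \(X\). Taking the infimum over \(\gamma\) yields \eqref{eq:globalproductdistancelower} and \eqref{eq:globalsumdistancelower}.

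The only point needing care is the one just noted: \(d_g\) is an infimum over curves confined to \(X^\circ\), while \(d_{g_0}\) may use curves reaching \(\partial X\). The inequality runs in the favourable direction, because restricting the admissible curves can only increase the infimum. The argument is therefore routine, and I expect no real obstacle.
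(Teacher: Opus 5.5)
Your proof is correct and follows the paper's argument: bound each $\rho_i$ by $R_i$, use monotonicity of $s\mapsto s^{-\alpha_i}$ to obtain $c_\Pi=\prod_i R_i^{-\alpha_i}$ and $c_\Sigma=\sum_i R_i^{-\alpha_i}$, then compare curve lengths and take infima. Your two extra remarks are valid refinements that the paper leaves implicit: the degenerate case $N=0$, and the fact that $d_g$ is an infimum over curves confined to $X^\circ$, which only helps the inequality.
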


\begin{proof}

Since
\[
\rho_i\leq R_i
\]
and
\[
\alpha_i\geq0,
\]
we have
\[
\rho_i^{-\alpha_i}
\geq
R_i^{-\alpha_i}.
\]

Therefore
\[
\prod_{i=1}^{N}
\rho_i^{-\alpha_i}
\geq
\prod_{i=1}^{N}
R_i^{-\alpha_i}.
\]

Set
\[
c_\Pi
=
\prod_{i=1}^{N}
R_i^{-\alpha_i}.
\]

Then
\[
c_\Pi>0
\]
and
\[
g_\Pi
\geq
c_\Pi g_0.
\]

Similarly,
\[
\sum_{i=1}^{N}
\rho_i^{-\alpha_i}
\geq
\sum_{i=1}^{N}
R_i^{-\alpha_i}.
\]

Set
\[
c_\Sigma
=
\sum_{i=1}^{N}
R_i^{-\alpha_i}.
\]

Then
\[
c_\Sigma>0
\]
and
\[
g_\Sigma
\geq
c_\Sigma g_0.
\]

The distance inequalities follow by applying the corresponding
quadratic-form inequalities to the length of every absolutely
continuous curve and then taking the infimum over all connecting
curves.

\end{proof}

\begin{corollary}[Background Convergence of Singular-Metric Cauchy Sequences]
\label{cor:backgroundconvergence}

Every \(d_{g_\Pi}\)-Cauchy sequence and every
\(d_{g_\Sigma}\)-Cauchy sequence is Cauchy with respect to
\(d_{g_0}\).

Since \(X\) is compact, every such sequence converges to a unique point
of \(X\) in the background topology.

\end{corollary}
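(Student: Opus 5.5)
The plan is to derive the corollary directly from Lemma~\ref{lem:globallowerbound}. That lemma gives the distance comparisons \eqref{eq:globalproductdistancelower} and \eqref{eq:globalsumdistancelower}, which hold for all pairs of interior points.

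First, I verify the Cauchy transfer. Let \(\{q_n\}\subset X^\circ\) be \(d_{g}\)-Cauchy, where \(g\in\{g_\Pi,g_\Sigma\}\). Write \(c_g\) for the corresponding constant, so that \(c_g=c_\Pi\) or \(c_g=c_\Sigma\). For \(\varepsilon>0\), choose \(N\) such that \(d_g(q_n,q_m)<\sqrt{c_g}\,\varepsilon\) for all \(n,m\geq N\). The lower bound then gives
\[
d_{g_0}(q_n,q_m)\leq c_g^{-1/2}\,d_g(q_n,q_m)<\varepsilon ,
\]
so \(\{q_n\}\) is \(d_{g_0}\)-Cauchy.

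One small point needs checking here. The comparison in Lemma~\ref{lem:globallowerbound} was proved by taking the infimum of lengths over curves lying in \(X^\circ\). So \(d_{g_0}\) on the left should be read either as the restriction to \(X^\circ\) of the distance on \(X\), or as the induced length distance on \(X^\circ\). For a curve \(\gamma\) in \(X^\circ\), its \(g_0\)-length bounds the \(d_{g_0}\)-distance on \(X\) between its endpoints. Hence \(d_{g_0}^X(p,q)\leq\inf_\gamma L_{g_0}(\gamma)\leq c_g^{-1/2}d_g(p,q)\), and the inequality holds in the form we need with the distance of \(X\).

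Next, I pass to the limit. By the setup in Section~\ref{sec:setup}, \((X,d_{g_0})\) is a compact metric space, and compact metric spaces are complete. Therefore the \(d_{g_0}\)-Cauchy sequence \(\{q_n\}\) converges in \(X\). Its limit is unique because metric spaces are Hausdorff. The metric topology of \(d_{g_0}\) coincides with the manifold topology of \(X\), which is the background topology.

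I expect no genuine obstacle; the only point needing care is the distinction between the length metrics on \(X^\circ\) and on \(X\) noted above. I would also record the consequence used in \eqref{eq:canonicalcompletionmap}. If two Cauchy sequences are equivalent, that is \(d_g(q_n,q_n')\to0\), then the same inequality gives \(d_{g_0}(q_n,q_n')\to0\), so they have the same background limit. Thus \(\Phi_g\) is well defined on completion classes.
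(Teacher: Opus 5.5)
Your proposal is correct and follows essentially the same route as the paper: the distance comparison of Lemma~\ref{lem:globallowerbound} transfers the Cauchy property to \(d_{g_0}\), and compactness, hence completeness, of \((X,d_{g_0})\) gives a unique background limit. Your remark that the infimum over curves in \(X^\circ\) dominates \(d_{g_0}\) computed on \(X\) is a valid clarification that the paper leaves implicit.
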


\begin{proof}

The distance inequalities
\eqref{eq:globalproductdistancelower}
and
\eqref{eq:globalsumdistancelower}
imply that
\[
d_g\text{-Cauchy}
\quad\Longrightarrow\quad
d_{g_0}\text{-Cauchy}.
\]

Because \(g_0\) is a smooth Riemannian metric on the compact manifold
with corners \(X\), the metric space
\[
(X,d_{g_0})
\]
is compact and therefore complete.

Uniqueness of the background limit follows from the metric property of
\(d_{g_0}\).

\end{proof}

%================================================
\subsection{The Canonical Background-Limit Map}
\label{subsec:canonicalbackgroundmap}
%================================================

Let
\[
\overline{(X^\circ,d_g)}
\]
denote the metric completion.

If
\[
\xi\in\overline{(X^\circ,d_g)}
\]
is represented by a \(d_g\)-Cauchy sequence
\[
\{q_n\}\subset X^\circ,
\]
Corollary~\ref{cor:backgroundconvergence} gives a unique point
\[
p\in X
\]
such that
\[
q_n\longrightarrow p
\]
in the background topology.

We therefore define
\begin{equation}
\Phi_g:
\overline{(X^\circ,d_g)}
\longrightarrow
X
\label{eq:completionbackgroundmap}
\end{equation}
by
\[
\Phi_g(\xi)=p.
\]

\begin{lemma}[Well-Definedness of the Background-Limit Map]
\label{lem:backgroundmapwelldefined}

The map
\[
\Phi_g
\]
is well defined.

\end{lemma}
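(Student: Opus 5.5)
Well-definedness of \(\Phi_g\) amounts to showing that the background limit does not depend on which Cauchy sequence represents the completion class \(\xi\). Recall that \(\overline{(X^\circ,d_g)}\) consists of equivalence classes of \(d_g\)-Cauchy sequences, where \(\{q_n\}\sim\{q_n'\}\) exactly when \(d_g(q_n,q_n')\to0\). I will fix two representatives \(\{q_n\}\) and \(\{q_n'\}\) of \(\xi\). By Corollary~\ref{cor:backgroundconvergence}, each converges in the background topology to a unique point, say \(p\) and \(p'\). The task is to show \(p=p'\).

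The key tool is the global lower bound of Lemma~\ref{lem:globallowerbound}: \(d_g\geq\sqrt{c_g}\,d_{g_0}\) on \(X^\circ\), with \(c_g\in\{c_\Pi,c_\Sigma\}\). Since the two sequences are equivalent, \(d_{g_0}(q_n,q_n')\leq c_g^{-1/2}d_g(q_n,q_n')\to0\). Note that \(d_{g_0}\) restricted to \(X^\circ\) is dominated by the background distance on \(X\), or equals it, since \(X^\circ\) is dense and \(g_0\) is smooth up to the boundary; in either case the inequality holds in the form needed. The triangle inequality in \((X,d_{g_0})\) then gives
\[
d_{g_0}(p,p')\leq d_{g_0}(p,q_n)+d_{g_0}(q_n,q_n')+d_{g_0}(q_n',p'),
\]
and all three terms tend to zero. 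Hence \(p=p'\).

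Two further points complete the argument. First, \(\Phi_g(\xi)\) lies in \(X^{\mathrm{acc}}_g\). If \(p\in\partial X\), then \(\{q_n\}\) is itself a \(d_g\)-Cauchy sequence converging to \(p\) in the background, so \(p\) is accessible by Definition~\ref{def:accessiblepoint}. If \(p\in X^\circ\), there is nothing to check. Therefore the codomain may be taken to be \(X^{\mathrm{acc}}_g\), as in \eqref{eq:canonicalcompletionmap}. Second, completion points that are already points of \(X^\circ\) correspond to sequences that are eventually close to a fixed interior point, and these are mapped to that point, consistently with the previous step.

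The only potential subtlety, and the main obstacle, is the comparison between \(d_{g_0}\) computed by curves in \(X^\circ\) and the background distance on \(X\). I expect this to be resolved either by noting that interior curves are admissible for the background metric, giving the needed inequality direction, or by the standard density of \(X^\circ\) in \(X\). Everything else is routine.
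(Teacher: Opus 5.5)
Your argument is correct and matches the paper's proof: equivalence of the two representatives gives $d_g(q_n,q_n')\to0$, the global lower bound of Lemma~\ref{lem:globallowerbound} transfers this to $d_{g_0}(q_n,q_n')\to0$, and the triangle inequality then forces $p=p'$.

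On the subtlety you flagged, the only inequality needed is that the $g_0$-distance on $X$ is at most the $g_0$-distance computed by curves in $X^\circ$, which holds because interior curves are admissible competitors in $X$ (your first suggested resolution). Your earlier wording ``dominated by the background distance on $X$'' states that inequality in the wrong direction, and density of $X^\circ$ alone would not supply the needed comparison.
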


\begin{proof}

Suppose
\[
\{q_n\}
\]
and
\[
\{q_n'\}
\]
represent the same completion point. Then
\[
d_g(q_n,q_n')
\longrightarrow0.
\]

By the global lower comparison,
\[
d_{g_0}(q_n,q_n')
\leq
C d_g(q_n,q_n')
\longrightarrow0.
\]

If
\[
q_n\to p
\]
and
\[
q_n'\to p'
\]
in the background topology, then
\[
d_{g_0}(p,p')
=
0.
\]

Hence
\[
p=p'.
\]

Thus the background limit depends only on the completion class.

\end{proof}

%================================================
\subsection{Inaccessible Background Limits Are Excluded}
\label{subsec:inaccessiblelimitsexcluded}
%================================================

The accessibility criteria of Section~\ref{sec:accessibility} imply
that the image of the canonical map consists only of accessible points.

\begin{proposition}[Exclusion of Inaccessible Product Limits]
\label{prop:excludeproductlimits}

Let
\[
\{q_n\}\subset X^\circ
\]
be a \(d_{g_\Pi}\)-Cauchy sequence satisfying
\[
q_n\longrightarrow p
\]
in the background topology.

Then
\[
A(p)<2.
\]

\end{proposition}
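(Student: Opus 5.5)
This is essentially a restatement of the necessity direction of Theorem~\ref{thm:productaccessibility}, so the plan is to reduce to it directly. There are two cases, according to whether the background limit \(p\) lies in the interior or on the boundary.

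\emph{Interior limit.} If \(p\in X^\circ\), then \(I(p)=\varnothing\). With the convention \(A_\varnothing=0\) we get \(A(p)=0<2\), and there is nothing to prove.

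\emph{Boundary limit.} If \(p\in\partial X\), set \(I=I(p)\neq\varnothing\), so that \(p\in F_I^\circ\). By hypothesis \(\{q_n\}\subset X^\circ\) is \(d_{g_\Pi}\)-Cauchy and converges to \(p\) in the background topology. This is exactly Definition~\ref{def:accessiblepoint}, so \(p\) is accessible for \(g_\Pi\). The implication \((1)\Rightarrow(3)\) of Theorem~\ref{thm:productaccessibility} then gives \(A_I=A(p)<2\).

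If a self-contained argument is preferred, I would repeat the necessity argument of that theorem in the adapted chart at \(p\). Assume \(A(p)\ge 2\) and set \(r_n=|x^{(n)}|\to0\). Pass greedily to a subsequence along which \(\Phi_\Pi(r)=\int_r^{r_*}s^{-A_I/2}\,ds\) increases by at least \(1\) from one term to the next; this is possible because \(\Phi_\Pi\) diverges as \(r\downarrow0\) when \(A_I\ge2\). Any curve joining consecutive terms now falls into one of three cases:
\begin{itemize}
\item[(i)] it stays in the chart, and Lemma~\ref{lem:weightedradialvariation} together with \eqref{eq:productradialspeedlower} gives length at least \(c^{1/2}\);
\item[(ii)] it exits through the normal boundary \(r=r_*\), and the same lemma gives a lower bound that is at least as large;
\item[(iii)] it exits through the tangential boundary, and the stay-versus-exit estimate of Section~\ref{subsec:localmodelcomparisons} gives length at least \(E_{\mathrm{tan}}>0\).
\end{itemize}
Hence consecutive distances are bounded below by a positive constant, contradicting the Cauchy property.

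The only point needing care, which I take to be the main obstacle, is case (iii). One must confirm that a curve leaving the cylinder through its tangential boundary must cover a \(g_0\)-distance bounded below independently of \(n\). This holds because \(z^{(n)}\to z_0\), so for large \(n\) the tangential coordinate is bounded away from the cylinder wall. The global lower bound of Lemma~\ref{lem:globallowerbound}, or the local lower comparison, then converts this \(g_0\)-distance into a uniform \(g_\Pi\)-length.
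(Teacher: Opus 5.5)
Your proposal is correct and follows the paper's own proof: the hypothesis is exactly Definition~\ref{def:accessiblepoint}, and the implication \((1)\Rightarrow(3)\) of Theorem~\ref{thm:productaccessibility} then gives \(A(p)<2\). You also treat an interior limit separately via \(A_\varnothing=0\), which the paper leaves implicit since accessibility is only defined for boundary points, and your self-contained rerun of the necessity argument is not needed but is consistent with the paper's.
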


\begin{proof}

By Definition~\ref{def:accessiblepoint}, the existence of a
\(d_{g_\Pi}\)-Cauchy sequence converging to \(p\) means precisely that
\(p\) is accessible for \(g_\Pi\).

Theorem~\ref{thm:productaccessibility} therefore gives
\[
A(p)<2.
\]

\end{proof}

\begin{proposition}[Exclusion of Inaccessible Sum Limits]
\label{prop:excludesumlimits}

Let
\[
\{q_n\}\subset X^\circ
\]
be a \(d_{g_\Sigma}\)-Cauchy sequence satisfying
\[
q_n\longrightarrow p
\]
in the background topology.

Then
\[
M(p)<2.
\]

\end{proposition}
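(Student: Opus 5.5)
The statement to prove is Proposition~\ref{prop:excludesumlimits}. It is the sum-interaction analogue of Proposition~\ref{prop:excludeproductlimits}, and the plan is to prove it the same way: go straight back to the definition of accessibility and then apply the sum criterion.

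First, suppose \(p\in X^\circ\). Then \(I(p)=\varnothing\), and we adopt the convention that the condition is vacuous there. The intended content of the proposition concerns boundary limits, since \(M_I\) is only used for nonempty active sets. So assume \(p\in\partial X\), and let \(I=I(p)\neq\varnothing\), so that \(p\in F_I^\circ\).

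The hypothesis gives a \(d_{g_\Sigma}\)-Cauchy sequence \(\{q_n\}\subset X^\circ\) with \(q_n\to p\) in the background topology of \(X\). This is exactly the condition in Definition~\ref{def:accessiblepoint}, so \(p\) is accessible for \(g_\Sigma\).

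Theorem~\ref{thm:sumaccessibility}, implication \((1)\Rightarrow(3)\), then gives \(M_I<2\). By \eqref{eq:pointwisesumweight}, \(M(p)=M_I\), so \(M(p)<2\).

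No step here is a genuine obstacle; all the analytic work sits in the necessity half of Theorem~\ref{thm:sumaccessibility}. That half uses the radial lower bound \eqref{eq:sumradialspeedlower}, Lemma~\ref{lem:weightedradialvariation}, and the stay-versus-exit alternative. One point needs checking: that argument only uses the fact that the sequence is Cauchy and converges to \(p\) in the background topology, which is precisely our hypothesis. The only place requiring any care is the bookkeeping for the interior case \(p\in X^\circ\), where the statement should be read as applying to boundary limits only.
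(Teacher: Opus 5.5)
Your proof is correct and matches the paper's argument: the Cauchy sequence shows that \(p\) is accessible in the sense of Definition~\ref{def:accessiblepoint}, and the necessity direction of Theorem~\ref{thm:sumaccessibility} then gives \(M(p)<2\). Restricting explicitly to boundary limits, because \(M_I\) is only defined for nonempty \(I\), is a sensible clarification that the paper leaves implicit.
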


\begin{proof}

The sequence makes \(p\) accessible in the sense of
Definition~\ref{def:accessiblepoint}. Hence
Theorem~\ref{thm:sumaccessibility} gives
\[
M(p)<2.
\]

\end{proof}

Thus
\[
\Phi_{g_\Pi}
\left(
\overline{(X^\circ,d_{g_\Pi})}
\right)
\subseteq
X_{g_\Pi}^{\mathrm{acc}}
\]
and
\[
\Phi_{g_\Sigma}
\left(
\overline{(X^\circ,d_{g_\Sigma})}
\right)
\subseteq
X_{g_\Sigma}^{\mathrm{acc}}.
\]

%================================================
\subsection{Accessible Points Produce Unique Completion Points}
\label{subsec:accessiblepointscompletion}
%================================================

We now prove the converse.

\begin{proposition}[Existence and Uniqueness over Accessible Points]
\label{prop:existenceuniquenessaccessible}

Let
\[
p\in X_g^{\mathrm{acc}}.
\]

Then every interior sequence
\[
q_n\in X^\circ,
\qquad
q_n\longrightarrow p
\]
in the background topology is \(d_g\)-Cauchy.

Moreover, any two such sequences determine the same point of the metric
completion.

\end{proposition}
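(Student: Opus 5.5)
We split into the cases \(p\in X^\circ\) and \(p\in\partial X\) accessible, and in both cases reduce everything to a shrinking-diameter estimate around \(p\).

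\emph{Interior case.} If \(p\in X^\circ\), the conformal factor of \(g\) is smooth and positive on a small compact \(g_0\)-ball \(B\subset X^\circ\) around \(p\), so on \(B\) we have \(g\leq C g_0\). For \(n\) large, \(q_n\) and \(q'_n\) lie in \(B\). Joining them by a curve in \(B\) whose \(g_0\)-length is comparable to \(d_{g_0}(q_n,q'_n)\) (for instance a coordinate segment in a convex chart) gives \(d_g(q_n,q'_n)\leq C\,d_{g_0}(q_n,q'_n)\). Hence a background-convergent sequence is \(d_g\)-Cauchy, and two sequences with the same limit are \(d_g\)-equivalent.

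\emph{Boundary case.} If \(p\in\partial X\) is accessible, let \(I=I(p)\), so that \(p\in F_I^\circ\). Theorem~\ref{thm:productaccessibility} (respectively Theorem~\ref{thm:sumaccessibility}) gives \(A_I<2\) (respectively \(M_I<2\)). Choose adapted coordinates with \(p=(0,z_0)\) and let \(\delta>0\) be the corresponding exponent. For an interior sequence \(q_n\to p\), set
\[
t_n=\max\Bigl\{\max_i x_i^{(n)},\,|z^{(n)}-z_0|\Bigr\},
\]
so that \(t_n\to0\). Given \(\varepsilon>0\), choose \(t<t_0\) with \(Ct^{\delta}<\varepsilon\), where \(C\) and \(t_0\) are the constants of Proposition~\ref{prop:shrinkingneighborhoods}. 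For all large \(n\) we have \(t_n\leq t\), hence \(q_n\in U_t(p)\), because interior points in the adapted chart have all active coordinates positive. Proposition~\ref{prop:shrinkingneighborhoods} then gives \(d_g(q_n,q_m)\leq\operatorname{diam}_g U_t(p)\leq Ct^{\delta}<\varepsilon\), so the sequence is \(d_g\)-Cauchy. Applying the same bound to \(q_n\) and \(q'_n\), both in \(U_t(p)\) for large \(n\), gives \(d_g(q_n,q'_n)\to0\). Equivalently, one may invoke Theorem~\ref{thm:productnormalcollapse} or Theorem~\ref{thm:sumnormalcollapse} directly. Thus the two sequences define the same completion point.

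\emph{Main obstacle.} The delicate point is making sure the definition of \(U_t(p)\) fits: every interior \(q_n\) near \(p\) must lie in the chart, with inactive hypersurfaces avoided and \(x_i>0\). This is where the chart must be shrunk so that the inactive \(\rho_j\) stay bounded below. The proof must also not quietly use the cross-stratum Lemma~\ref{lem:crossstratumshrinking}, since that lemma depends on the present proposition; I will rely only on Proposition~\ref{prop:shrinkingneighborhoods} and the accessibility theorems.
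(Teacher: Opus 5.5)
Your proposal is correct and follows essentially the same route as the paper. In the interior case it uses local bi-Lipschitz equivalence of \(g\) and \(g_0\). In the boundary case the accessibility theorems give \(A_I<2\) or \(M_I<2\), and the diameter bound of Proposition~\ref{prop:shrinkingneighborhoods} then yields both the Cauchy property and \(d_g(q_n,q_n')\to0\). Your care to avoid Lemma~\ref{lem:crossstratumshrinking} also matches the paper's own remark that this forward reference is not circular.
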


\begin{proof}

If
\[
p\in X^\circ,
\]
all defining functions are positive at \(p\); after shrinking to a
neighbourhood of \(p\), they are uniformly bounded away from zero.
Hence \(g\) and \(g_0\) are locally bi-Lipschitz equivalent there,
and the conclusion is immediate.

Suppose now that
\[
p\in\partial_g^{\mathrm{acc}}X.
\]

Then
\[
p\in F_I^\circ
\]
for some active index set \(I\).

For the product interaction,
\[
A_I<2,
\]
while for the sum interaction,
\[
M_I<2.
\]

Let
\[
U_t(p)
\]
be the shrinking adapted neighbourhoods from
Proposition~\ref{prop:shrinkingneighborhoods}.

Since
\[
q_n\longrightarrow p
\]
in the background topology, for every sufficiently small \(t>0\) there
exists \(N(t)\) such that
\[
q_n\in U_t(p)
\]
whenever
\[
n\geq N(t).
\]

For the product interaction,
\[
\operatorname{diam}_{g_\Pi}
U_t(p)
\leq
Ct^{1-A_I/2},
\]
and for the sum interaction,
\[
\operatorname{diam}_{g_\Sigma}
U_t(p)
\leq
Ct^{1-M_I/2}.
\]

In either case,
\[
\operatorname{diam}_{g}
U_t(p)
\longrightarrow0
\]
as
\[
t\downarrow0.
\]

Therefore \(\{q_n\}\) is \(d_g\)-Cauchy.

Now let
\[
\{q_n\}
\]
and
\[
\{q_n'\}
\]
be two interior sequences converging to the same accessible point
\(p\).

For every sufficiently small \(t>0\), both sequences eventually lie in
\(U_t(p)\). Hence
\[
d_g(q_n,q_n')
\leq
\operatorname{diam}_g U_t(p)
\]
for all sufficiently large \(n\).

Letting
\[
t\downarrow0
\]
shows that
\[
d_g(q_n,q_n')
\longrightarrow0.
\]

Thus the two sequences represent the same completion point.

\end{proof}

Consequently, every
\[
p\in X_g^{\mathrm{acc}}
\]
determines a unique completion point, which we denote by
\[
\Psi_g(p).
\]

This defines a map
\begin{equation}
\Psi_g:
X_g^{\mathrm{acc}}
\longrightarrow
\overline{(X^\circ,d_g)}.
\label{eq:inversecompletionmap}
\end{equation}

By construction,
\[
\Phi_g\circ\Psi_g
=
\operatorname{id}_{X_g^{\mathrm{acc}}}.
\]

%================================================
\subsection{Distinct Accessible Points Remain Distinct}
\label{subsec:distinctaccessiblepoints}
%================================================

\begin{proposition}[Separation of Distinct Accessible Points]
\label{prop:separationaccessiblepoints}

Let
\[
p,q\in X_g^{\mathrm{acc}}
\]
with
\[
p\neq q.
\]

Then
\[
\Psi_g(p)
\neq
\Psi_g(q).
\]

\end{proposition}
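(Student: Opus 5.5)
The plan is to use the global lower comparison from Lemma~\ref{lem:globallowerbound}, which makes the background-limit map $\Phi_g$ well defined, and to argue that it separates the completion points $\Psi_g(p)$ and $\Psi_g(q)$. By construction $\Phi_g\circ\Psi_g=\operatorname{id}_{X_g^{\mathrm{acc}}}$. So if $\Psi_g(p)=\Psi_g(q)$, applying $\Phi_g$ gives
\[
p=\Phi_g(\Psi_g(p))=\Phi_g(\Psi_g(q))=q,
\]
which contradicts $p\neq q$. In other words, $\Psi_g$ has a left inverse and is therefore injective. This is essentially the whole argument; the remaining work is to make sure each ingredient genuinely applies.

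For clarity, I would also give the direct sequence-level version. By Proposition~\ref{prop:existenceuniquenessaccessible}, choose interior sequences $q_n\to p$ and $q_n'\to q$ in the background topology. When $p\in X^\circ$ the constant sequence is allowed, and for a boundary point any interior approach works. These sequences are $d_g$-Cauchy and represent $\Psi_g(p)$ and $\Psi_g(q)$ respectively. If the two completion points agreed, we would have $d_g(q_n,q_n')\to0$. By \eqref{eq:globalproductdistancelower} or \eqref{eq:globalsumdistancelower} this would force $d_{g_0}(q_n,q_n')\to0$. Passing to the limit then gives $d_{g_0}(p,q)=0$, which is impossible since $d_{g_0}$ is a genuine metric on $X$ and $p\neq q$.

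Quantitatively, the same estimate shows that the completion distance satisfies
\[
d_{\overline X}(\Psi_g(p),\Psi_g(q))\geq\sqrt{c}\,d_{g_0}(p,q)>0,
\]
where $c$ is $c_\Pi$ or $c_\Sigma$ according to the interaction law.

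I do not expect a real obstacle here. The only point needing care is that the global lower bound holds on all of $X^\circ$ with a constant independent of the location, which Lemma~\ref{lem:globallowerbound} provides because the $\rho_i$ are bounded above and $\alpha_i\geq0$. Once that is available, the inequality between distances passes to the completion by continuity of $d_g$ on Cauchy sequences and of $d_{g_0}$ on $X$.
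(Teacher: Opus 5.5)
Your proposal is correct and matches the paper's proof. The paper likewise takes interior sequences $p_n\to p$, $q_n\to q$ and uses the global lower bound of Lemma~\ref{lem:globallowerbound} to get $\liminf_n d_g(p_n,q_n)\geq c\,d_{g_0}(p,q)>0$, which is exactly your sequence-level argument; your constant $\sqrt{c}$ is the slightly more accurate one. Your left-inverse route through $\Phi_g\circ\Psi_g=\operatorname{id}$ is only a repackaging of the same estimate, since Lemma~\ref{lem:backgroundmapwelldefined} is proved by precisely this lower-bound argument.
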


\begin{proof}

Choose interior approximating sequences
\[
p_n\longrightarrow p,
\qquad
q_n\longrightarrow q.
\]

By the global lower metric comparison,
\[
d_g(p_n,q_n)
\geq
c\,d_{g_0}(p_n,q_n)
\]
for some
\[
c>0.
\]

Passing to the lower limit gives
\[
\liminf_{n\to\infty}
d_g(p_n,q_n)
\geq
c\,d_{g_0}(p,q).
\]

Since
\[
p\neq q,
\]
we have
\[
d_{g_0}(p,q)>0.
\]

Therefore the two completion classes have strictly positive distance
and are distinct.

\end{proof}

It follows that
\[
\Psi_g
\]
is injective.

Since every completion point has an accessible background limit,
\[
\Phi_g
\]
and
\[
\Psi_g
\]
are inverse bijections.

%================================================
\subsection{Continuity of the Canonical Identification}
\label{subsec:completiontopology}
%================================================

We now compare the completion topology with the topology inherited from
\(X\).

\begin{proposition}[Continuity of the Background-Limit Map]
\label{prop:phicontinuous}

The map
\[
\Phi_g:
\overline{(X^\circ,d_g)}
\longrightarrow
X_g^{\mathrm{acc}}
\]
is continuous.

\end{proposition}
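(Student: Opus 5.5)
The plan is to show that $\Phi_g$ is in fact Lipschitz from $\bigl(\overline{(X^\circ,d_g)},d_{\overline X}\bigr)$ to $(X_g^{\mathrm{acc}},d_{g_0})$. Continuity then follows at once, because the subspace topology on $X_g^{\mathrm{acc}}$ inherited from $X$ is the one induced by the restriction of $d_{g_0}$.

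The key step is to pass the global lower bound of Lemma~\ref{lem:globallowerbound} to the completion. Let $\xi,\eta$ be completion points represented by $d_g$-Cauchy sequences $\{q_n\},\{q_n'\}\subset X^\circ$. By the definition of the completion metric,
\[
d_{\overline X}(\xi,\eta)=\lim_{n\to\infty}d_g(q_n,q_n').
\]
By \eqref{eq:globalproductdistancelower} or \eqref{eq:globalsumdistancelower}, we have $d_g(q_n,q_n')\geq\sqrt{c}\,d_{g_0}(q_n,q_n')$ with $c\in\{c_\Pi,c_\Sigma\}$. By Corollary~\ref{cor:backgroundconvergence} and Lemma~\ref{lem:backgroundmapwelldefined}, $q_n\to\Phi_g(\xi)$ and $q_n'\to\Phi_g(\eta)$ in the background topology. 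Since $d_{g_0}$ is continuous on $X\times X$, we get $d_{g_0}(q_n,q_n')\to d_{g_0}(\Phi_g(\xi),\Phi_g(\eta))$. Letting $n\to\infty$ gives
\[
d_{g_0}\bigl(\Phi_g(\xi),\Phi_g(\eta)\bigr)\leq c^{-1/2}\,d_{\overline X}(\xi,\eta).
\]
By Propositions~\ref{prop:excludeproductlimits} and \ref{prop:excludesumlimits}, the image of $\Phi_g$ lies in $X_g^{\mathrm{acc}}$, so $\Phi_g$ is a Lipschitz map into that subspace. This proves the proposition.

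There is no serious obstacle. The only point needing care is the interchange of limits. The limit defining $d_{\overline X}$ exists and does not depend on the representatives, by the standard completion construction. The background limits are well defined by Lemma~\ref{lem:backgroundmapwelldefined}. So the inequality passes to the limit term by term. The genuinely harder direction, continuity of $\Psi_g=\Phi_g^{-1}$, is not needed here. That direction will use Lemma~\ref{lem:crossstratumshrinking}.
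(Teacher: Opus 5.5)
Your argument is correct and uses the same ingredient as the paper's proof: the global lower bound of Lemma~\ref{lem:globallowerbound}, passed through limits of interior representatives. The only difference is packaging. The paper proves sequential continuity by choosing interior points $x_n,y_n$ within $1/n$ of $\xi_n,\xi$ in both metrics and applying the triangle inequality, whereas you extend the inequality directly to the completion and obtain the slightly stronger Lipschitz bound $d_{g_0}(\Phi_g(\xi),\Phi_g(\eta))\leq c^{-1/2}\,d_{\overline X}(\xi,\eta)$.
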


\begin{proof}

Let
\[
\xi_n\longrightarrow\xi
\]
in the metric completion.

Set
\[
p_n=\Phi_g(\xi_n),
\qquad
p=\Phi_g(\xi).
\]

For each \(n\), choose an interior point \(x_n\in X^\circ\)
representing \(\xi_n\) sufficiently far along a Cauchy representative
so that
\[
d_g(x_n,\xi_n)<\frac1n
\qquad\text{and}\qquad
d_{g_0}(x_n,p_n)<\frac1n.
\]
Likewise choose \(y_n\in X^\circ\) from a Cauchy representative of
\(\xi\) so that
\[
d_g(y_n,\xi)<\frac1n
\qquad\text{and}\qquad
d_{g_0}(y_n,p)<\frac1n.
\]
Since \(\xi_n\to\xi\) in the completion metric,
\[
d_g(x_n,y_n)
\leq
d_g(x_n,\xi_n)+d_{\overline X}(\xi_n,\xi)+d_g(\xi,y_n)
\longrightarrow0.
\]
The global lower distance comparison on interior points gives
\[
d_{g_0}(x_n,y_n)\leq C\,d_g(x_n,y_n)\longrightarrow0.
\]
Hence, by the triangle inequality,
\[
d_{g_0}(p_n,p)
\leq
d_{g_0}(p_n,x_n)+d_{g_0}(x_n,y_n)+d_{g_0}(y_n,p)
\longrightarrow0.
\]

Thus
\[
p_n\longrightarrow p
\]
in the background topology.

\end{proof}

The inverse direction requires the cross-stratum estimate proved in
Section~\ref{sec:normalcollapse}.

\begin{proposition}[Continuity of the Inverse Completion Map]
\label{prop:psicontinuous}

The map
\[
\Psi_g:
X_g^{\mathrm{acc}}
\longrightarrow
\overline{(X^\circ,d_g)}
\]
is continuous when \(X_g^{\mathrm{acc}}\) is equipped with the subspace
topology inherited from \(X\).

\end{proposition}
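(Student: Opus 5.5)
To prove continuity of \(\Psi_g\) at a point \(p\in X_g^{\mathrm{acc}}\), the plan is to show that if \(p_n\to p\) in \(X\) with \(p_n\in X_g^{\mathrm{acc}}\), then \(d_{\overline X}(\Psi_g(p_n),\Psi_g(p))\to0\). Since \(X\) is metrizable, sequential continuity suffices. We split into two cases according to whether \(p\) is interior or boundary.

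\emph{Interior case.} Suppose \(p\in X^\circ\). Choose a neighbourhood of \(p\) on which all defining functions are bounded below, so that \(g\) and \(g_0\) are bi-Lipschitz there, as in the first step of Proposition~\ref{prop:existenceuniquenessaccessible}. Eventually \(p_n\) lies in a small coordinate ball around \(p\) contained in \(X^\circ\). The straight segment from \(p_n\) to \(p\) then has \(g\)-length at most \(C\,d_{g_0}(p_n,p)\). Since \(\Psi_g\) restricted to \(X^\circ\) is the canonical isometric inclusion, the claim follows.

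\emph{Boundary case.} Suppose \(p\in F_I^\circ\) is accessible. The key input is Lemma~\ref{lem:crossstratumshrinking}. Fix the adapted chart around \(p\) and the constants \(C,t_0\) given there. Given \(t\in(0,t_0)\), background convergence \(p_n\to p\) implies that for all large \(n\) the adapted coordinates \((x^{(n)},z^{(n)})\) of \(p_n\) satisfy \(0\le x_i^{(n)}\le t\) for \(i\in I\) and \(|z^{(n)}-z_0|\le t\). This holds because the coordinates are continuous and vanish or equal \(z_0\) at \(p\). Hence \(p_n\in\overline{U_t(p)}\) for all large \(n\), and of course \(p\in\overline{U_t(p)}\) as well. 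Here we must check that \(\overline{U_t(p)}\) is understood as the set described in the proof of that lemma, namely points with \(0\le x_i\le t\) and \(|z-z_0|\le t\), including boundary points. Both \(p_n\) and \(p\) are accessible, so the lemma gives
\[
d_{\overline X}(\Psi_g(p_n),\Psi_g(p))\le Ct^{\delta},
\]
with \(\delta=1-A_I/2\) or \(1-M_I/2\), both positive by accessibility. Letting \(t\downarrow0\) yields the convergence.

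The main obstacle is precisely that \(p_n\) may lie on different, shallower strata than \(p\). For example, \(p_n\) may lie on a face \(F_J^\circ\) with \(J\subsetneq I\), where \(I(p_n)\ne I(p)\). So one cannot use Proposition~\ref{prop:existenceuniquenessaccessible} at \(p\) directly, because it concerns interior sequences. This is exactly what Lemma~\ref{lem:crossstratumshrinking} handles: it approximates each \(p_n\) by interior points \(p_n+\varepsilon\mathbf 1\) inside \(U_{2t}(p)\) and uses the uniform diameter bound of Proposition~\ref{prop:shrinkingneighborhoods} there.

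We also need \(\Psi_g(p_n)\) to be the completion point represented by these perturbed sequences. This is guaranteed by Proposition~\ref{prop:existenceuniquenessaccessible} applied at \(p_n\), and it is legitimate because every point of \(\overline{U_t(p)}\) has active set contained in \(I\), by the chart shrinking.

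Combined with Proposition~\ref{prop:phicontinuous} and the bijectivity already established, this shows \(\Phi_g\) is a homeomorphism.
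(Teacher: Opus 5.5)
Your proposal is correct and follows essentially the same route as the paper. The interior case uses the local bi-Lipschitz equivalence of \(g\) and \(g_0\), and the boundary case notes that \(p_n\) and \(p\) eventually both lie in \(X_g^{\mathrm{acc}}\cap\overline{U_t(p)}\), then applies Lemma~\ref{lem:crossstratumshrinking} to get \(d_{\overline X}(\Psi_g(p_n),\Psi_g(p))\le Ct^{\delta}\) with \(t\) arbitrary. Your extra remarks only spell out what that lemma already uses: the explicit description of \(\overline{U_t(p)}\), and the identification of the \(\varepsilon\)-perturbed interior sequences with \(\Psi_g(p_n)\) via Proposition~\ref{prop:existenceuniquenessaccessible}.
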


\begin{proof}

Let
\[
p_n\longrightarrow p
\]
in the background topology, where
\[
p_n,p\in X_g^{\mathrm{acc}}.
\]

If
\[
p\in X^\circ,
\]
then \(g\) is smooth near \(p\), and local equivalence of \(g\) and
\(g_0\) immediately gives
\[
d_{\overline X}
\bigl(
\Psi_g(p_n),
\Psi_g(p)
\bigr)
\longrightarrow0.
\]

Suppose now that
\[
p\in F_I^\circ
\subset\partial_g^{\mathrm{acc}}X.
\]

Let
\[
\delta
=
1-\frac{A_I}{2}
\]
for the product interaction, or
\[
\delta
=
1-\frac{M_I}{2}
\]
for the sum interaction.

In either case,
\[
\delta>0.
\]

Fix
\[
\varepsilon>0.
\]

By
Lemma~\ref{lem:crossstratumshrinking},
there exists \(t>0\) sufficiently small that
\[
C t^\delta
<
\varepsilon.
\]

Since
\[
p_n\longrightarrow p
\]
in the background topology, for sufficiently large \(n\),
\[
p_n
\]
lies in the closure of the adapted shrinking neighbourhood
\[
U_t(p).
\]

Both
\[
p_n
\]
and
\[
p
\]
are accessible, and for all sufficiently large \(n\) both lie in
\[
X_g^{\mathrm{acc}}
\cap
\overline{U_t(p)}.
\]
Lemma~\ref{lem:crossstratumshrinking}, applied to these two accessible
background points and to their canonical completion points under
\(\Psi_g\), therefore gives
\[
d_{\overline X}
\bigl(
\Psi_g(p_n),
\Psi_g(p)
\bigr)
\leq
Ct^\delta
<
\varepsilon.
\]

Hence
\[
\Psi_g(p_n)
\longrightarrow
\Psi_g(p)
\]
in the completion metric.

Thus \(\Psi_g\) is continuous.

\end{proof}

%================================================
\subsection{Global Completion Classification}
\label{subsec:globalcompletionclassification}
%================================================

We can now state the global completion theorem.

\begin{theorem}[Global Metric Completion]
\label{thm:globalcompletion}

Let \(X\) be a compact smooth manifold with corners with embedded
boundary hypersurfaces, and let \(g_0\) be a smooth Riemannian metric on
\(X\).

For the product interaction metric
\[
g_\Pi
=
\left(
\prod_{i=1}^{N}
\rho_i^{-\alpha_i}
\right)
g_0,
\]
the metric completion
\[
\overline{(X^\circ,d_{g_\Pi})}
\]
is canonically homeomorphic to
\begin{equation}
X_{g_\Pi}^{\mathrm{acc}}
=
X^\circ
\cup
\left\{
p\in\partial X:
A(p)<2
\right\}.
\label{eq:productcompletionaccessiblelocus}
\end{equation}

For the sum interaction metric
\[
g_\Sigma
=
\left(
\sum_{i=1}^{N}
\rho_i^{-\alpha_i}
\right)
g_0,
\]
the metric completion
\[
\overline{(X^\circ,d_{g_\Sigma})}
\]
is canonically homeomorphic to
\begin{equation}
X_{g_\Sigma}^{\mathrm{acc}}
=
X^\circ
\cup
\left\{
p\in\partial X:
M(p)<2
\right\}.
\label{eq:sumcompletionaccessiblelocus}
\end{equation}

In each case, the homeomorphism sends a completion class to the unique
background limit of any representing Cauchy sequence.

\end{theorem}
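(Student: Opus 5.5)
The plan is to assemble the theorem from the results already established in Sections~\ref{sec:accessibility}--\ref{sec:globalcompletion}, treating both interactions in parallel with \(g\in\{g_\Pi,g_\Sigma\}\). First, by Corollary~\ref{cor:backgroundconvergence} and Lemma~\ref{lem:backgroundmapwelldefined}, every completion class has a unique background limit. This gives a well-defined map \(\Phi_g\) from \(\overline{(X^\circ,d_g)}\) to \(X\). Propositions~\ref{prop:excludeproductlimits} and~\ref{prop:excludesumlimits} show that its image lies in \(X_g^{\mathrm{acc}}\). The pointwise criteria \eqref{eq:productpointwiseaccessibility} and \eqref{eq:sumpointwiseaccessibility}, coming from Theorems~\ref{thm:productaccessibility} and~\ref{thm:sumaccessibility}, identify \(X_g^{\mathrm{acc}}\) with the explicit sets in \eqref{eq:productcompletionaccessiblelocus} and \eqref{eq:sumcompletionaccessiblelocus}. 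Interior points are trivially included, since \(g\) and \(g_0\) are locally bi-Lipschitz there.

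Next I will establish bijectivity. Proposition~\ref{prop:existenceuniquenessaccessible} provides \(\Psi_g\) with \(\Phi_g\circ\Psi_g=\mathrm{id}\), so \(\Phi_g\) is surjective. For injectivity of \(\Phi_g\), take two classes \(\xi,\xi'\) with the same background limit \(p\). Their representatives are interior sequences converging to \(p\). Since \(p\) is accessible, the uniqueness part of Proposition~\ref{prop:existenceuniquenessaccessible} says these sequences are equivalent, hence \(\xi=\xi'\). So \(\Psi_g\circ\Phi_g=\mathrm{id}\) as well, and Proposition~\ref{prop:separationaccessiblepoints} supplies the matching injectivity of \(\Psi_g\).

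For the topology, Proposition~\ref{prop:phicontinuous} gives continuity of \(\Phi_g\), and Proposition~\ref{prop:psicontinuous} gives continuity of \(\Psi_g\) for the subspace topology on \(X_g^{\mathrm{acc}}\). Hence \(\Phi_g\) is a homeomorphism. It is canonical because it is defined intrinsically by background limits of Cauchy representatives, with no choice of sequence or coordinates involved. Remark~\ref{rem:definingfunctioninvariance} shows the conclusion is also independent of the choice of defining functions.

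The main obstacle is continuity of \(\Psi_g\) at boundary points approached by accessible points in other strata. This step relies on Lemma~\ref{lem:crossstratumshrinking}, which itself uses Proposition~\ref{prop:existenceuniquenessaccessible}. I will check that this dependency is not circular: Proposition~\ref{prop:existenceuniquenessaccessible} uses only Proposition~\ref{prop:shrinkingneighborhoods}. I will also check that, after shrinking the chart, nearby accessible points \(p_n\) lie in \(\overline{U_t(p)}\) and have only indices from \(I\) active. This is what ensures the \(\varepsilon_n\)-perturbation produces interior points. With this verified, sequential continuity suffices because both spaces are metrizable.
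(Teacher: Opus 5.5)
Your proposal is correct and follows essentially the same route as the paper: well-definedness of $\Phi_g$, exclusion of inaccessible limits, existence and uniqueness of completion points over accessible points, separation, and the two continuity propositions, with Lemma~\ref{lem:crossstratumshrinking} driving continuity of $\Psi_g$. You derive injectivity of $\Phi_g$ explicitly from the uniqueness half of Proposition~\ref{prop:existenceuniquenessaccessible}, which is the step the paper leaves implicit when it asserts that $\Phi_g$ and $\Psi_g$ are inverse bijections.
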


\begin{proof}

The map
\[
\Phi_g
\]
is well defined by
Lemma~\ref{lem:backgroundmapwelldefined}.

Propositions~\ref{prop:excludeproductlimits} and
\ref{prop:excludesumlimits} show that its image lies in the
corresponding accessible subset.

Proposition~\ref{prop:existenceuniquenessaccessible} shows that every
accessible point determines a completion point.

Proposition~\ref{prop:separationaccessiblepoints} shows that distinct
accessible points determine distinct completion points.

Hence
\[
\Phi_g
\]
is a bijection with inverse
\[
\Psi_g.
\]

Proposition~\ref{prop:phicontinuous} shows that
\[
\Phi_g
\]
is continuous, while
Proposition~\ref{prop:psicontinuous} shows that
\[
\Psi_g
\]
is continuous.

Therefore
\[
\Phi_g
\]
is a homeomorphism.

The explicit descriptions of the accessible subsets follow from
Theorems~\ref{thm:productaccessibility} and
\ref{thm:sumaccessibility}.

\end{proof}

%================================================
\subsection{Consequences for Boundary Incidence}
\label{subsec:completionboundaryincidence}
%================================================

The global completion theorem shows that the two interaction laws can
produce different incidence behaviour at higher-codimension open
faces.

\begin{proposition}[Accessibility of Incident Open Faces]
\label{prop:accessibilityboundaryincidence}

Let
\[
I\subseteq\{1,\ldots,N\}
\]
and suppose
\[
F_I^\circ\neq\varnothing.
\]

For the product interaction,
\[
F_I^\circ
\text{ is accessible}
\quad\Longleftrightarrow\quad
\sum_{i\in I}\alpha_i<2.
\]

Consequently, it may happen that every codimension-one hypersurface
indexed by \(I\) is individually accessible while the deeper open face
\(F_I^\circ\) is inaccessible.

For the sum interaction,
\[
F_I^\circ
\text{ is accessible}
\quad\Longleftrightarrow\quad
\max_{i\in I}\alpha_i<2.
\]

In particular, if
\[
\alpha_i<2
\qquad
\text{for every }i\in I,
\]
then
\[
F_I^\circ
\]
is accessible.

\end{proposition}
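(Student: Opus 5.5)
The two equivalences follow from the facewise criteria \eqref{eq:productfaceaccessibility} and \eqref{eq:sumfaceaccessibility}. These in turn come from Theorems~\ref{thm:productaccessibility} and \ref{thm:sumaccessibility} together with Definition~\ref{def:accessibleopenface}. The plan is to make that deduction explicit, then add an example for the ``consequently'' clause and note the sum-case corollary.

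First, fix a nonempty open face \(F_I^\circ\). Every point \(p\in F_I^\circ\) has \(I(p)=I\) by \eqref{eq:openface}, so \(A(p)=A_I\) and \(M(p)=M_I\). By Definition~\ref{def:accessibleopenface}, \(F_I^\circ\) is accessible exactly when every point of it is accessible. By Theorem~\ref{thm:productaccessibility}, each such point is accessible for \(g_\Pi\) if and only if \(A_I<2\). Since this condition does not depend on the point, the face is accessible if and only if \(\sum_{i\in I}\alpha_i<2\). Nonemptiness matters here: it rules out the vacuous case, so that ``every point accessible'' is equivalent to the pointwise criterion rather than trivially true. The same argument with Theorem~\ref{thm:sumaccessibility} gives the sum criterion \(\max_{i\in I}\alpha_i<2\). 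Because the maximum is over a finite set, this is equivalent to \(\alpha_i<2\) for all \(i\in I\), which is the final assertion.

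For the ``consequently'' clause, I would give an explicit configuration. Take \(|I|=2\), for instance \(X=[0,1]^2\) with \(H_1=\{x_1=0\}\) and \(H_2=\{x_2=0\}\), so that \(F_{\{1,2\}}^\circ\) is the corner point. Choose \(\alpha_1=\alpha_2=1\). The codimension-one open faces \(F_{\{1\}}^\circ\) and \(F_{\{2\}}^\circ\) then have \(A=1<2\) and are accessible. The corner has \(A_I=2\), so it is inaccessible for \(g_\Pi\), while it is accessible for \(g_\Sigma\) since \(M_I=1\). More generally, any weights with each \(\alpha_i<2\) but \(\sum_{i\in I}\alpha_i\geq2\) give the same phenomenon.

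The only delicate point is interpretive. ``Individually accessible hypersurface'' should mean that the codimension-one open face \(F_{\{i\}}^\circ\) is accessible, not the whole set \(H_i\), which contains the deeper strata. I would state this explicitly, following the paper's distinction between open faces and set-theoretic intersections. Beyond that, the proof is a direct deduction from the earlier theorems.
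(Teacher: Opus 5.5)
Your proposal is correct and follows the paper's route. The paper's proof likewise deduces both equivalences directly from Theorems~\ref{thm:productaccessibility} and \ref{thm:sumaccessibility}, observing that $\alpha_i<2$ for each $i$ does not force $\sum_{i\in I}\alpha_i<2$ but does force $\max_{i\in I}\alpha_i<2$. Your additions (constancy of $I(p)$ on $F_I^\circ$, the role of nonemptiness, and the concrete corner example with $\alpha_1=\alpha_2=1$, which mirrors the paper's later Example~\ref{ex:codimensiontwoincidenceloss}) only make explicit what the paper leaves implicit.
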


\begin{proof}

The statements follow directly from
Theorems~\ref{thm:productaccessibility} and
\ref{thm:sumaccessibility}.

For the product interaction, individual accessibility gives only
\[
\alpha_i<2
\qquad
\text{for each }i\in I.
\]

This does not imply
\[
\sum_{i\in I}\alpha_i<2.
\]

Hence accessibility may be lost when several individually subcritical
weights become simultaneously active.

For the sum interaction, if
\[
\alpha_i<2
\qquad
\text{for every }i\in I,
\]
then
\[
\max_{i\in I}\alpha_i<2.
\]

Therefore the open face
\[
F_I^\circ
\]
is accessible.

\end{proof}

\begin{remark}[Open Faces versus Set-Theoretic Intersections]
\label{rem:openfacesversusintersections}

The preceding proposition concerns the open face
\[
F_I^\circ,
\]
whose points have complete active index set exactly \(I\).

It does not imply that every point of the full set-theoretic
intersection
\[
\bigcap_{i\in I}H_i
\]
is accessible for the sum interaction.

A point
\[
p\in\bigcap_{i\in I}H_i
\]
may also lie on additional boundary hypersurfaces, so that
\[
I(p)\supsetneq I.
\]

Accessibility is then determined by the complete active set:
\[
p\text{ is accessible for }g_\Sigma
\quad\Longleftrightarrow\quad
\max_{j\in I(p)}\alpha_j<2.
\]

Thus the precise stability statement for the sum interaction is that
activating additional subcritical hypersurfaces preserves
accessibility.

\end{remark}

%================================================
\section{Boundary Snowflake Geometry and Hausdorff Dimension}
\label{sec:snowflake}
%================================================

The preceding sections determine which boundary points occur in the
metric completion, show that the active normal directions collapse over
each accessible background point, and identify the global metric
completion with the corresponding accessible subset of \(X\).

We now study the intrinsic metric geometry induced along an accessible
open face.

The main phenomenon is a snowflake deformation of the smooth tangential
metric. In the codimension-one setting, Romney formulates the
corresponding exponent through a H\"older condition for conformal
Grushin metrics; see \cite[Definition~3.3]{Romney2016}. In the
present multi-weight setting, the exponent is determined by the
effective singular weight associated with the interaction law.

For the product interaction, the relevant quantity is
\[
A_I
=
\sum_{i\in I}\alpha_i,
\]
whereas for the sum interaction it is
\[
M_I
=
\max_{i\in I}\alpha_i.
\]

A diagonal path provides the expected upper bounds, but by itself does
not exclude potentially shorter anisotropic excursions in the normal
variables. We therefore begin with model estimates that permit arbitrary
absolutely continuous competitors.

For the product model, exact dilation homogeneity determines the
boundary exponent once the distance between distinct tangential
completion points is shown to be positive and finite. For the sum
model, exact homogeneity is generally lost when the exponents are
unequal. Nevertheless, at sufficiently small scales, the largest active
exponent controls the local boundary geometry.

%================================================
\subsection{Boundary Points and Induced Completion Metrics}
\label{subsec:boundarycompletionmetric}
%================================================

Let
\[
F_I^\circ
\]
be an accessible open face, and let
\[
d_{F_I}
\]
denote the Riemannian distance induced by the restriction of \(g_0\) to
\(F_I^\circ\).

Fix
\[
p\in F_I^\circ
\]
and choose adapted local coordinates
\[
(x,z)
=
\bigl((x_i)_{i\in I},z\bigr),
\]
where the variables \(x_i\) are normal to the active boundary
hypersurfaces and \(z\) denotes coordinates along the face.

For a tangential point \(z\) and \(\varepsilon>0\), define
\begin{equation}
p_\varepsilon(z)
=
(\varepsilon,\ldots,\varepsilon,z).
\label{eq:boundaryapproximation}
\end{equation}

By
Theorems~\ref{thm:productnormalcollapse} and
\ref{thm:sumnormalcollapse},
the family
\[
\{p_\varepsilon(z)\}_{\varepsilon\downarrow0}
\]
determines a unique point of the corresponding metric completion
whenever the face is accessible.

We denote this completion point by
\[
\overline z.
\]

For two points \(z,z'\) of the same accessible open face, define the
induced boundary distance by
\begin{equation}
d_\partial(z,z')
=
d_{\overline X}(\overline z,\overline{z'}),
\label{eq:inducedboundarydistance}
\end{equation}
where \(\overline X\) denotes the relevant metric completion.

Equivalently,
\begin{equation}
d_\partial(z,z')
=
\lim_{\varepsilon\downarrow0}
d_g
\bigl(
p_\varepsilon(z),
p_\varepsilon(z')
\bigr).
\label{eq:boundarydistancelimit}
\end{equation}

%================================================

%================================================
\subsection{The Homogeneous Multi-Normal Product Model}
\label{subsec:homogeneousproductboundary}
%================================================

Consider
\[
\mathcal H
=
(0,\infty)^k\times\mathbb R^m
\]
with coordinates
\[
(x,z)
=
(x_1,\ldots,x_k,z),
\]
equipped with the homogeneous product metric
\begin{equation}
g_{\boldsymbol{\alpha}}^\Pi
=
\left(
\prod_{i=1}^{k}x_i^{-\alpha_i}
\right)
\left(
\sum_{i=1}^{k}dx_i^2+|dz|^2
\right),
\label{eq:homogeneousmultiproductmetric}
\end{equation}
where
\[
\alpha_i\geq0.
\]

Set
\begin{equation}
A
=
\sum_{i=1}^{k}\alpha_i
\label{eq:homogeneousproductweight}
\end{equation}
and assume
\[
A<2.
\]

Define
\begin{equation}
\delta_\Pi
=
1-\frac{A}{2}.
\label{eq:homogeneousproductdelta}
\end{equation}

Then
\[
0<\delta_\Pi\leq1.
\]

The formal boundary
\[
\{x_1=\cdots=x_k=0\}
\]
lies at finite distance, and every
\[
z\in\mathbb R^m
\]
determines a unique completion point.

%================================================
\subsubsection{Exact Dilation Homogeneity}
%================================================

For \(s>0\), define
\begin{equation}
D_s(x,z)
=
(sx,sz).
\label{eq:homogeneousproductdilation}
\end{equation}

Then
\begin{align}
D_s^\ast g_{\boldsymbol{\alpha}}^\Pi
&=
\left(
\prod_{i=1}^{k}(sx_i)^{-\alpha_i}
\right)
\left(
\sum_{i=1}^{k}s^2dx_i^2+s^2|dz|^2
\right)
\\
&=
s^{2-A}
g_{\boldsymbol{\alpha}}^\Pi.
\label{eq:homogeneousproductmetricdilation}
\end{align}

Therefore
\begin{equation}
d_{g_{\boldsymbol{\alpha}}^\Pi}
(D_sp,D_sq)
=
s^{\delta_\Pi}
d_{g_{\boldsymbol{\alpha}}^\Pi}(p,q).
\label{eq:homogeneousproductdistancedilation}
\end{equation}

Equation~\eqref{eq:homogeneousproductdistancedilation} shows that
\(D_s\) is globally Lipschitz with respect to the model distance, with
Lipschitz inverse \(D_{1/s}\). Hence it maps Cauchy sequences to
Cauchy sequences and preserves equivalence of Cauchy sequences. It
therefore extends uniquely to a bijective similarity of the metric
completion, with the same scaling factor \(s^{\delta_\Pi}\).

%================================================
\subsubsection{Nondegeneracy of the Product Boundary Metric}
%================================================

\begin{lemma}[Positive and Finite Product Boundary Distance]
\label{lem:positiveproductunitboundarydistance}

Let
\[
z,z'\in\mathbb R^m
\]
with
\[
L=|z-z'|>0.
\]

Then there exist constants
\[
0<c\leq C<\infty
\]
depending only on the model parameters such that
\begin{equation}
cL^{\delta_\Pi}
\leq
d_{\partial,\Pi}
(\overline z,\overline{z'})
\leq
CL^{\delta_\Pi}.
\label{eq:productboundarytwosidedmodel}
\end{equation}

In particular, if
\[
e\in\mathbb R^m
\]
is a unit vector, then
\begin{equation}
0
<
d_{\partial,\Pi}
(\overline0,\overline e)
<
\infty.
\label{eq:positiveproductunitdistance}
\end{equation}

\end{lemma}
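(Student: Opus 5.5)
The plan is to reduce the two-sided estimate \eqref{eq:productboundarytwosidedmodel} to the single statement \eqref{eq:positiveproductunitdistance}, using the symmetries of the model $\mathcal H$, and then to prove that statement by an explicit upper curve and a stay-versus-exit lower bound.

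\emph{Reduction to a unit distance.} The metric $g_{\boldsymbol{\alpha}}^\Pi$ is invariant under tangential translations $(x,z)\mapsto(x,z+w)$ and tangential rotations $(x,z)\mapsto(x,Oz)$ with $O\in O(m)$, since the conformal factor depends only on $x$. These maps are isometries of $(\mathcal H,d_{g_{\boldsymbol{\alpha}}^\Pi})$, so they extend to isometries of the completion. They send $p_\varepsilon(z)$ to $p_\varepsilon(z+w)$, respectively $p_\varepsilon(Oz)$, and therefore send $\overline z$ to $\overline{z+w}$, respectively $\overline{Oz}$. Using these, I may assume $z=0$ and $z'=Le$ for a fixed unit vector $e$.

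Next, $D_L p_\varepsilon(e)=p_{L\varepsilon}(Le)$, so the extension of $D_L$ to the completion, recorded after \eqref{eq:homogeneousproductdistancedilation}, maps $\overline e$ to $\overline{Le}$ and fixes $\overline 0$. Combined with \eqref{eq:boundarydistancelimit}, this gives
\[
d_{\partial,\Pi}(\overline0,\overline{Le})=L^{\delta_\Pi}\,d_{\partial,\Pi}(\overline0,\overline e).
\]
Thus it suffices to prove $0<d_{\partial,\Pi}(\overline0,\overline e)<\infty$, and one can then take $c=C=d_{\partial,\Pi}(\overline0,\overline e)$. The only thing to check here is that the extended maps act on the completion points $\overline z$ as claimed; this follows because they map the defining family $p_\varepsilon(z)$ to the corresponding family.

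\emph{Finiteness.} For $0<\varepsilon<1$, join $p_\varepsilon(0)$ to $p_\varepsilon(e)$ by a three-piece curve:
\begin{enumerate}
\item the diagonal segment from $(\varepsilon\mathbf 1,0)$ up to $(\mathbf 1,0)$;
\item the tangential segment from $(\mathbf 1,0)$ to $(\mathbf 1,e)$;
\item the diagonal segment from $(\mathbf 1,e)$ back down to $(\varepsilon\mathbf 1,e)$.
\end{enumerate}
On the two diagonal pieces the speed is bounded by $\sqrt k\,s^{-A/2}$, so each has length at most $\sqrt k\int_0^1 s^{-A/2}\,ds<\infty$ because $A<2$. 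The middle piece has length exactly $1$, since the conformal factor equals $1$ at $x=\mathbf 1$. This bound is uniform in $\varepsilon$, so the limit \eqref{eq:boundarydistancelimit} is finite.

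\emph{Positivity.} This is the step that needs a genuine lower bound valid for arbitrary competitors, possibly anisotropic. Fix $\varepsilon<1/2$ and let $\gamma$ be any absolutely continuous curve from $p_\varepsilon(0)$ to $p_\varepsilon(e)$. There are two cases.
\begin{itemize}
\item If $\gamma$ stays in $\{r(x)\leq1\}$, then every $x_i\leq1$. Since $\alpha_i\geq0$, the factor $\prod x_i^{-\alpha_i}$ is at least $1$. Hence $g_{\boldsymbol{\alpha}}^\Pi\geq|dz|^2$ along $\gamma$, and $L(\gamma)\geq|e|=1$.
\item If $\gamma$ reaches $\{r=1\}$, restrict to the subarc up to the first crossing. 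On $\{r\leq1\}$ we have $\prod x_i^{-\alpha_i}\geq r^{-A}$ by \eqref{eq:productradialweightlower}. Lemma~\ref{lem:weightedradialvariation} with $\beta=A/2$ then gives $L(\gamma)\geq\int_{\sqrt k\,\varepsilon}^{1}s^{-A/2}\,ds$. For $\varepsilon$ small this is at least the positive constant $\tfrac12\int_0^1 s^{-A/2}\,ds$, where the factor $\tfrac12$ is a safe choice once $\sqrt k\,\varepsilon$ is small.
\end{itemize}
Taking the infimum over $\gamma$ and then letting $\varepsilon\downarrow0$ in \eqref{eq:boundarydistancelimit} gives $d_{\partial,\Pi}(\overline0,\overline e)\geq\min\{1,\tfrac12\int_0^1s^{-A/2}\,ds\}>0$.

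I expect this positivity step to be the main point where care is needed. No local-chart exit case arises, because the model is global. The essential observation is that inside $\{r\leq1\}$ the conformal factor is bounded below by $1$ rather than decaying, which prevents competitors from shortcutting by moving closer to the boundary.
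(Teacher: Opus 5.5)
Your argument is correct, but it is organized differently from the paper's. The paper does not use the dilation inside this lemma. It proves the two-sided bound directly at every scale $L$. For the upper bound it uses the same three-piece detour, but at height $R=L$, which gives $C=2\sqrt{k}/\delta_\Pi+1$. For the lower bound it sets $R=\sup_t r(t)$ for an arbitrary competitor and plays the tangential estimate $LR^{-A/2}$ against the radial estimate $(R^{\delta_\Pi}-r_\varepsilon^{\delta_\Pi})/\delta_\Pi$, splitting into the cases $R\leq L$ and $R>L$.

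Your stay-or-cross argument at the level $r=1$ is exactly this estimate specialized to $L=1$, with the first hitting time of $\{r=1\}$ in place of a maximizing time. What you add is the transport to all scales via tangential translations, rotations and $D_L$. The paper reserves that argument for Theorem~\ref{thm:exactproductsnowflake}. Your use of it here is legitimate, because the extension of $D_s$ to the completion is established before the lemma and does not depend on it. It gives you the exact law with $c=C=d_{\partial,\Pi}(\overline0,\overline e)$ in one step, so the lemma and the subsequent theorem become a single proof.

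The paper's scale-explicit route gains independence from exact homogeneity. It is the template reused in Theorem~\ref{thm:multisumsnowflake} for the sum model. There $D_s^\ast g_{\boldsymbol{\alpha}}^\Sigma$ is not a constant multiple of $g_{\boldsymbol{\alpha}}^\Sigma$, so your symmetry reduction is not available.

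One bookkeeping point: in the positivity step the condition you actually need is $\sqrt{k}\,\varepsilon<1$, so that the starting point lies in $\{r\leq1\}$, not $\varepsilon<1/2$. Since you let $\varepsilon\downarrow0$, this is harmless.
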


\begin{proof}

We first prove the upper bound.

For \(\varepsilon>0\), consider
\[
p_\varepsilon(z)
=
(\varepsilon,\ldots,\varepsilon,z)
\]
and
\[
p_\varepsilon(z')
=
(\varepsilon,\ldots,\varepsilon,z').
\]

Fix
\[
R>\varepsilon.
\]

Join \(p_\varepsilon(z)\) to
\[
(R,\ldots,R,z)
\]
along the normal diagonal, move tangentially to
\[
(R,\ldots,R,z'),
\]
and then return along the normal diagonal to
\(p_\varepsilon(z')\).

Along either normal segment,
\[
x_1=\cdots=x_k=s,
\]
so
\[
\prod_{i=1}^{k}x_i^{-\alpha_i}
=
s^{-A}
\]
and
\[
\sum_{i=1}^{k}dx_i^2
=
k\,ds^2.
\]

Hence each normal segment has length
\[
\sqrt{k}
\int_\varepsilon^R
s^{-A/2}\,ds
=
\frac{\sqrt{k}}{\delta_\Pi}
\left(
R^{\delta_\Pi}
-
\varepsilon^{\delta_\Pi}
\right).
\]

Along the tangential segment,
\[
x_1=\cdots=x_k=R,
\]
so the conformal factor is
\[
R^{-A}
\]
and the line-element multiplier is
\[
R^{-A/2}.
\]

Thus the tangential segment has length
\[
LR^{-A/2}.
\]

Therefore
\begin{equation}
d_{g_{\boldsymbol{\alpha}}^\Pi}
\bigl(
p_\varepsilon(z),
p_\varepsilon(z')
\bigr)
\leq
\frac{2\sqrt{k}}{\delta_\Pi}
\left(
R^{\delta_\Pi}
-
\varepsilon^{\delta_\Pi}
\right)
+
LR^{-A/2}.
\label{eq:productdetourfiniteepsilon}
\end{equation}

Letting
\[
\varepsilon\downarrow0
\]
gives
\begin{equation}
d_{\partial,\Pi}
(\overline z,\overline{z'})
\leq
\frac{2\sqrt{k}}{\delta_\Pi}
R^{\delta_\Pi}
+
LR^{-A/2}.
\label{eq:productdetourupper}
\end{equation}

Choosing
\[
R=L
\]
yields
\[
d_{\partial,\Pi}
(\overline z,\overline{z'})
\leq
\left(
\frac{2\sqrt{k}}{\delta_\Pi}+1
\right)
L^{\delta_\Pi}.
\]

We now prove the lower bound.

Let
\[
\gamma_\varepsilon(t)
=
(x(t),z(t)),
\qquad
t\in[0,1],
\]
be an arbitrary absolutely continuous curve joining
\(p_\varepsilon(z)\) to \(p_\varepsilon(z')\).

Define
\begin{equation}
r(t)
=
\left(
\sum_{i=1}^{k}x_i(t)^2
\right)^{1/2}
\label{eq:productmodelradius}
\end{equation}
and
\[
R
=
\sup_{t\in[0,1]}r(t).
\]

Since
\[
x_i(t)\leq r(t)
\]
for every \(i\),
\[
\prod_{i=1}^{k}
x_i(t)^{-\alpha_i/2}
\geq
r(t)^{-A/2}.
\]

Moreover,
\[
|\dot r(t)|
\leq
|\dot x(t)|
\]
for almost every \(t\).

Therefore
\begin{equation}
L_{g_{\boldsymbol{\alpha}}^\Pi}
(\gamma_\varepsilon)
\geq
\int_0^1
r(t)^{-A/2}
\sqrt{
|\dot r(t)|^2
+
|\dot z(t)|^2
}
\,dt.
\label{eq:productcombinedintegrandlower}
\end{equation}

Since
\[
r(t)\leq R,
\]
we obtain
\begin{align}
L_{g_{\boldsymbol{\alpha}}^\Pi}
(\gamma_\varepsilon)
&\geq
R^{-A/2}
\int_0^1
|\dot z(t)|\,dt
\\
&\geq
LR^{-A/2}.
\label{eq:producttangentiallower}
\end{align}

Choose
\[
t_0\in[0,1]
\]
such that
\[
r(t_0)=R.
\]

At the initial point,
\[
r(0)
=
\sqrt{k}\,\varepsilon
=:
r_\varepsilon.
\]

Restricting
\eqref{eq:productcombinedintegrandlower}
to \([0,t_0]\) and dropping the tangential component gives
\begin{equation}
L_{g_{\boldsymbol{\alpha}}^\Pi}
(\gamma_\varepsilon)
\geq
\int_0^{t_0}
r(t)^{-A/2}
|\dot r(t)|\,dt.
\label{eq:productradialsubinterval}
\end{equation}

Applying Lemma~\ref{lem:weightedradialvariation} with
\[
\beta=\frac{A}{2}
\]
gives
\[
\int_0^{t_0}
r(t)^{-A/2}
|\dot r(t)|\,dt
\geq
\frac{
R^{\delta_\Pi}
-
r_\varepsilon^{\delta_\Pi}
}{
\delta_\Pi
},
\qquad
\delta_\Pi=1-\frac{A}{2}.
\]

Thus
\begin{equation}
L_{g_{\boldsymbol{\alpha}}^\Pi}
(\gamma_\varepsilon)
\geq
\frac{
R^{\delta_\Pi}
-
r_\varepsilon^{\delta_\Pi}
}{
\delta_\Pi
}.
\label{eq:productradiallower}
\end{equation}

Combining
\eqref{eq:producttangentiallower}
and
\eqref{eq:productradiallower},
\begin{equation}
L_{g_{\boldsymbol{\alpha}}^\Pi}
(\gamma_\varepsilon)
\geq
\max
\left\{
LR^{-A/2},
\frac{
R^{\delta_\Pi}
-
r_\varepsilon^{\delta_\Pi}
}{
\delta_\Pi
}
\right\}.
\label{eq:productcombinedlower}
\end{equation}

If
\[
R\leq L,
\]
then
\[
LR^{-A/2}
\geq
L^{1-A/2}
=
L^{\delta_\Pi}.
\]

If
\[
R>L,
\]
then
\[
R^{\delta_\Pi}
\geq
L^{\delta_\Pi},
\]
and therefore
\[
L_{g_{\boldsymbol{\alpha}}^\Pi}
(\gamma_\varepsilon)
\geq
\frac{
L^{\delta_\Pi}
-
r_\varepsilon^{\delta_\Pi}
}{
\delta_\Pi
}.
\]

Thus there exist constants
\[
c_1,c_2>0
\]
depending only on \(A\) such that
\begin{equation}
L_{g_{\boldsymbol{\alpha}}^\Pi}
(\gamma_\varepsilon)
\geq
c_1L^{\delta_\Pi}
-
c_2r_\varepsilon^{\delta_\Pi}.
\label{eq:productuniformlower}
\end{equation}

Taking the infimum over all admissible curves and then letting
\[
\varepsilon\downarrow0
\]
gives
\[
d_{\partial,\Pi}
(\overline z,\overline{z'})
\geq
cL^{\delta_\Pi}.
\]

Together with the upper bound, this proves
\eqref{eq:productboundarytwosidedmodel}.

\end{proof}

%================================================
\subsubsection{Exact Product Snowflake Law}
%================================================

\begin{theorem}[Exact Homogeneous Product Snowflake Law]
\label{thm:exactproductsnowflake}

Suppose
\[
A<2.
\]

Then there exists a constant
\[
C_{\boldsymbol{\alpha},k}>0
\]
such that for all
\[
z,z'\in\mathbb R^m,
\]
\begin{equation}
d_{\partial,\Pi}
(\overline z,\overline{z'})
=
C_{\boldsymbol{\alpha},k}
|z-z'|^{1-A/2}.
\label{eq:exactproductsnowflake}
\end{equation}

\end{theorem}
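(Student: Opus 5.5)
The plan is to combine exact dilation homogeneity with the symmetries of the model. The model has three: translations in $z$, orthogonal rotations in $z$, and the dilations $D_s$. Together with the nondegeneracy bound of Lemma~\ref{lem:positiveproductunitboundarydistance}, these force $d_{\partial,\Pi}(\overline z,\overline{z'})$ to be a fixed multiple of $|z-z'|^{\delta_\Pi}$.

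\emph{Step 1: translations and rotations.} For $w\in\mathbb R^m$ and $O\in O(m)$, the map $T(x,z)=(x,Oz+w)$ preserves $g^\Pi_{\boldsymbol\alpha}$ exactly, because the conformal factor depends only on $x$ and $|dz|^2$ is Euclidean. So $T$ is an isometry of the interior. It extends to an isometry of the completion, and it sends the approximating family $p_\varepsilon(z)$ to $p_\varepsilon(Oz+w)$. Using the limit formula \eqref{eq:boundarydistancelimit}, this gives
\[
d_{\partial,\Pi}(\overline z,\overline{z'})=d_{\partial,\Pi}\bigl(\overline 0,\overline{O(z'-z)}\bigr).
\]
Hence the boundary distance depends only on $L=|z-z'|$. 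Write $d_{\partial,\Pi}(\overline z,\overline{z'})=f(L)$, with $f(0)=0$.

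\emph{Step 2: dilations.} The dilation maps $p_\varepsilon(z)$ to $D_s p_\varepsilon(z)=(s\varepsilon,\ldots,s\varepsilon,sz)=p_{s\varepsilon}(sz)$. By \eqref{eq:homogeneousproductdistancedilation},
\[
d_g\bigl(p_{s\varepsilon}(sz),p_{s\varepsilon}(sz')\bigr)=s^{\delta_\Pi}\,d_g\bigl(p_\varepsilon(z),p_\varepsilon(z')\bigr).
\]
Let $\varepsilon\downarrow0$; then $s\varepsilon\downarrow0$ as well. By \eqref{eq:boundarydistancelimit}, and because the limit does not depend on the approximating family (Theorem~\ref{thm:productnormalcollapse}, applied in the model), we get $f(sL)=s^{\delta_\Pi}f(L)$ for all $s,L>0$.

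\emph{Step 3: identify the constant.} Take $L=1$ and $s=|z-z'|$ to get $f(|z-z'|)=f(1)\,|z-z'|^{\delta_\Pi}$. Set $C_{\boldsymbol\alpha,k}=f(1)=d_{\partial,\Pi}(\overline0,\overline e)$ for any unit vector $e$. By \eqref{eq:positiveproductunitdistance} this constant satisfies $0<C_{\boldsymbol\alpha,k}<\infty$. The case $z=z'$ is trivial.

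The main obstacle is the justification in Step~2 that the limit defining $d_{\partial,\Pi}$ commutes with the dilation. Two points need care:
\begin{itemize}
\item The completion point $\overline z$ must be independent of the approximating sequence, which is the normal collapse result.
\item The model is global on $(0,\infty)^k\times\mathbb R^m$, not a chart of a compact $X$.
\end{itemize}
For the second point, I would note that the interpolation estimate of Theorem~\ref{thm:productnormalcollapse} uses only the exact form of the metric. It therefore holds verbatim in the model: any sequence converging to $(0,z)$ is within $Ct_n^{\delta_\Pi}$ of $p_{t_n}(z)$. So $\{p_{s\varepsilon}(sz)\}_\varepsilon$ and $\{p_\varepsilon(sz)\}_\varepsilon$ define the same point $\overline{sz}$, which is what the scaling identity $f(sL)=s^{\delta_\Pi}f(L)$ requires.
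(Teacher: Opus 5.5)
Your proposal is correct and follows the paper's proof essentially step for step. Tangential translation and rotation invariance reduce the boundary distance to a function of $L=|z-z'|$, dilation homogeneity gives $f(sL)=s^{\delta_\Pi}f(L)$, and Lemma~\ref{lem:positiveproductunitboundarydistance} makes $C_{\boldsymbol\alpha,k}=d_{\partial,\Pi}(\overline0,\overline e)$ positive and finite. Your Step~2 is more careful than the paper's, though normal collapse is not needed there: $\{p_{s\varepsilon}(sz)\}_{\varepsilon}$ is just a reparametrization of $\{p_\eta(sz)\}_{\eta}$, so only the existence of the limit \eqref{eq:boundarydistancelimit} in the model is used.
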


\begin{remark}[Relation to Conformal Grushin Spaces]
\label{rem:romneyspecialcase}

When \(k=1\), the homogeneous product model
\(g_{\boldsymbol{\alpha}}^\Pi\) is a conformal Grushin-type metric in
the sense of \cite{Romney2016}, obtained from the Euclidean metric by
the conformal factor \(d_E(\cdot,Y)^{-\beta}\) with \(Y\) the singular
hyperplane and \(\beta=\alpha/2\). Under this identification, the
exponent \(1-A/2\) in
Theorem~\ref{thm:exactproductsnowflake} coincides with the exponent
\(1-\beta\) appearing in the H\"older condition of
\cite[Definition~3.3]{Romney2016}. The relationship is not one of
mere restatement. Romney's general theory assumes the H\"older
condition of \cite[Definition~3.3]{Romney2016} (and gives sufficient
conditions for it in \cite[Proposition~3.4]{Romney2016}) for a general
closed singular set \(Y\subset\mathbb R^n\). By contrast,
Theorem~\ref{thm:exactproductsnowflake} proves an \emph{exact}
proportionality in the flat-hyperplane model, available because the
model enjoys full dilation and translation symmetry. The exact
law is thus a stronger statement special to this symmetric case, not a
generalization of the Hölder estimate to arbitrary codimension.

\end{remark}

\begin{proof}

Translation invariance in the tangential variables implies that the
boundary distance depends only on
\[
z-z'.
\]

Rotational invariance in the Euclidean \(z\)-variables implies that it
depends only on
\[
L=|z-z'|.
\]

Let
\[
e\in\mathbb R^m
\]
be a unit vector and define
\[
C_{\boldsymbol{\alpha},k}
=
d_{\partial,\Pi}
(\overline0,\overline e).
\]

By
Lemma~\ref{lem:positiveproductunitboundarydistance},
\[
0
<
C_{\boldsymbol{\alpha},k}
<
\infty.
\]

For \(L>0\), the dilation \(D_L\) maps the boundary pair
\[
(\overline0,\overline e)
\]
to
\[
(\overline0,\overline{Le}).
\]

Using
\eqref{eq:homogeneousproductdistancedilation},
\[
d_{\partial,\Pi}
(\overline0,\overline{Le})
=
L^{\delta_\Pi}
d_{\partial,\Pi}
(\overline0,\overline e).
\]

Therefore
\[
d_{\partial,\Pi}
(\overline z,\overline{z'})
=
C_{\boldsymbol{\alpha},k}
|z-z'|^{\delta_\Pi}.
\]

\end{proof}

\begin{remark}[Anisotropic Normal Excursions]
\label{rem:productanisotropicexcursions}

The proof does not assume that minimizing or nearly minimizing curves
satisfy
\[
x_1=\cdots=x_k.
\]

The normal coordinates may move to different depths. Such anisotropic
motion may affect the multiplicative constant
\(C_{\boldsymbol{\alpha},k}\), but exact dilation homogeneity forces the
power-law exponent
\[
1-\frac12
\sum_{i=1}^{k}\alpha_i.
\]

\end{remark}

%================================================
\subsection{Local Product Boundary Geometry}
\label{subsec:localproductboundary}
%================================================

We now return to the product interaction on \(X^\circ\).

Let
\[
F_I^\circ
\]
be an accessible open face, so that
\[
A_I<2.
\]

\begin{theorem}[Local Product Boundary Snowflake Geometry]
\label{thm:localproductsnowflake}

For every
\[
p\in F_I^\circ
\]
there exists a relatively compact neighbourhood
\[
V\Subset F_I^\circ
\]
of \(p\) and constants
\[
0<c\leq C<\infty
\]
such that for all sufficiently close
\[
z,z'\in V,
\]
\begin{equation}
c\,
d_{F_I}(z,z')^{1-A_I/2}
\leq
d_{\partial,\Pi}(z,z')
\leq
C\,
d_{F_I}(z,z')^{1-A_I/2}.
\label{eq:localproductsnowflake}
\end{equation}

\end{theorem}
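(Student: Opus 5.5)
The plan is to transfer the two-sided model estimate of Lemma~\ref{lem:positiveproductunitboundarydistance} to the adapted chart. The key inputs are the local comparison \eqref{eq:localproductcomparison} and a localization argument for near-minimizing curves. Exactness from Theorem~\ref{thm:exactproductsnowflake} is not needed. Fix \(p=(0,z_0)\in F_I^\circ\) and adapted coordinates on a product cylinder \(U=\{r(x)<r_1\}\times\{|z-z_0|<r_1\}\). Take \(V=\{|z-z_0|<r_1/4\}\) on the face. On \(V\), \(d_{F_I}\) is bi-Lipschitz to the Euclidean distance \(|z-z'|\) by \eqref{eq:backgroundmetriccomparison}, so it suffices to prove the estimate with \(L=|z-z'|\) for \(L\le L_0\), where \(L_0\) is small. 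The metric \(g_\Pi\) on \(U\) lies between \(c_\Pi\) and \(C_\Pi\) times the model metric \(g^\Pi_{\boldsymbol\alpha}\) with \(A=A_I\). By \eqref{eq:boundarydistancelimit}, \(d_{\partial,\Pi}(z,z')=\lim_{\varepsilon\downarrow0}d_{g_\Pi}(p_\varepsilon(z),p_\varepsilon(z'))\).

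\emph{Upper bound.} I will reuse the detour curve from the proof of Lemma~\ref{lem:positiveproductunitboundarydistance} with \(R=L\): a normal diagonal segment up to depth \(L\), a tangential segment of length \(L\), and a diagonal segment back down. For \(L\le L_0<r_1/(2\sqrt k)\) this curve lies in \(U\). Its \(g_\Pi\)-length is at most \(C_\Pi^{1/2}\) times its model length, which is \(\le(2\sqrt k/\delta+1)L^{\delta}\) with \(\delta=1-A_I/2\). Letting \(\varepsilon\downarrow0\) gives the upper inequality.

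\emph{Lower bound.} Let \(\gamma\) be any curve from \(p_\varepsilon(z)\) to \(p_\varepsilon(z')\), and split into two cases.

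\begin{enumerate}
\item If \(\gamma\) leaves \(U\), then either it crosses \(r=r_*\) (with \(r_*=r_1/2\)), and Lemma~\ref{lem:weightedradialvariation} together with \eqref{eq:productradialspeedlower} give length \(\ge c^{1/2}(r_*^{\delta}-(\sqrt k\varepsilon)^\delta)/\delta\); or it exits tangentially, which requires \(g_0\)-displacement at least \(r_1/4\) and hence, by Lemma~\ref{lem:globallowerbound}, length \(\ge\sqrt{c_\Pi}\,c'r_1\). In either case the length is bounded below by a constant \(E>0\) independent of \(\varepsilon\). Shrinking \(L_0\) so that \(CL_0^\delta\le E\) gives length \(\ge E\ge c L^\delta\).
\item If \(\gamma\) stays in \(U\), the lower comparison gives \(L_{g_\Pi}(\gamma)\ge c_\Pi^{1/2}L_{g^\Pi_{\boldsymbol\alpha}}(\gamma)\). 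The model lower bound \eqref{eq:productuniformlower}, whose proof uses only the pointwise inequalities \(x_i\le r\), \(|\dot r|\le|\dot x|\) and Lemma~\ref{lem:weightedradialvariation}, then yields \(\ge c_1L^\delta-c_2(\sqrt k\varepsilon)^\delta\).
\end{enumerate}

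Taking the infimum over curves and letting \(\varepsilon\downarrow0\) gives \(d_{\partial,\Pi}(z,z')\ge cL^\delta\). The bi-Lipschitz comparison with \(d_{F_I}\) then converts both bounds into \eqref{eq:localproductsnowflake}.

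The main obstacle is the localization step: curves that leave the chart are not controlled by the model comparison. This is why the statement is restricted to sufficiently close \(z,z'\). I will handle it as in the stay-versus-exit alternative of Section~\ref{subsec:localmodelcomparisons}, noting that the exit cost \(E\) is uniform in \(\varepsilon\) and that \(L^\delta\to0\). A minor point is that the model computation in Lemma~\ref{lem:positiveproductunitboundarydistance} takes the supremum radius \(R\) over the whole curve; this remains valid for curves confined to \(U\).
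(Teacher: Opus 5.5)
Your proposal is correct and follows essentially the same route as the paper. Both arguments compare $g_\Pi$ on a product cylinder with the homogeneous model $g^\Pi_{\boldsymbol{\alpha}_I}$, use the model two-sided estimates for competitors that stay in the chart, and handle escaping curves with a uniform exit cost, restricting to separations small enough that the local upper bound falls below that cost. The only difference is cosmetic: you apply the detour curve and the lower bound from Lemma~\ref{lem:positiveproductunitboundarydistance} curve by curve rather than citing the exact law of Theorem~\ref{thm:exactproductsnowflake}, which makes explicit, where the paper is terse, that the upper-bound competitor stays inside the chart.
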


\begin{proof}

Choose adapted coordinates near \(p\).

By the local metric comparison of
Section~\ref{subsec:localmodelcomparisons},
there exist constants
\[
0<c_0\leq C_0<\infty
\]
such that
\[
c_0
g_{\boldsymbol{\alpha}_I}^\Pi
\leq
g_\Pi
\leq
C_0
g_{\boldsymbol{\alpha}_I}^\Pi
\]
on a sufficiently small coordinate neighbourhood, where
\[
\boldsymbol{\alpha}_I
=
(\alpha_i)_{i\in I}.
\]

Thus the local singular metric is uniformly comparable to the
homogeneous product model.

By
Theorem~\ref{thm:exactproductsnowflake},
the model boundary metric is exactly proportional to
\[
|z-z'|^{1-A_I/2}.
\]

The smooth face distance satisfies
\[
|z-z'|
\asymp
d_{F_I}(z,z')
\]
locally.

Hence
\[
d_{\partial,\Pi}(z,z')
\asymp
d_{F_I}(z,z')^{1-A_I/2}
\]
for sufficiently close tangential points.

Finally, the global lower comparison
\[
g_\Pi\geq c g_0
\]
implies that curves leaving a fixed adapted coordinate neighbourhood
incur a positive length cost. To make this quantitative, fix relatively compact adapted coordinate
neighborhoods \(V\Subset U\) containing the boundary points under
consideration. Since \(g_\Pi\geq c\,g_0\) and
\[
\operatorname{dist}_{g_0}(\overline V,X\setminus U)>0,
\]
there exists \(E_U>0\) such that every curve joining two points of
\(V\) and leaving \(U\) has \(g_\Pi\)-length at least \(E_U\).
The local construction gives
\[
d_{\partial,\Pi}(z,z')
\leq C\,d_{F_I}(z,z')^{\delta_\Pi},
\qquad
\delta_\Pi=1-\frac{A_I}{2}.
\]
Choose \(\eta>0\) such that
\[
C\eta^{\delta_\Pi}<E_U.
\]
Hence, whenever
\[
d_{F_I}(z,z')<\eta,
\]
every curve leaving \(U\) has length at least
\(E_U>C\,d_{F_I}(z,z')^{\delta_\Pi}\), whereas a local competitor
has length at most \(C\,d_{F_I}(z,z')^{\delta_\Pi}\). Thus an
escaping curve cannot lower the infimum. Moreover, escaping competitors
already satisfy
\[
L_g(\gamma)\geq E_U
>
C\,d_{F_I}(z,z')^{\delta_\Pi}
\geq
c\,d_{F_I}(z,z')^{\delta_\Pi},
\]
so the lower bound is also valid globally. Therefore the local
two-sided estimate governs the global completion distance whenever
\(d_{F_I}(z,z')<\eta\).

This proves the theorem.

\end{proof}

%================================================
\subsection{The Multi-Normal Sum Model}
\label{subsec:multisummodel}
%================================================

Consider
\begin{equation}
g_{\boldsymbol{\alpha}}^\Sigma
=
\left(
\sum_{i=1}^{k}x_i^{-\alpha_i}
\right)
\left(
\sum_{i=1}^{k}dx_i^2+|dz|^2
\right)
\label{eq:multisummodel}
\end{equation}
on
\[
(0,\infty)^k\times\mathbb R^m,
\]
where
\[
\alpha_i\geq0.
\]

Set
\begin{equation}
M
=
\max_{1\leq i\leq k}\alpha_i
\label{eq:summodelmax}
\end{equation}
and assume
\[
M<2.
\]

Define
\begin{equation}
\delta_\Sigma
=
1-\frac{M}{2}.
\label{eq:summodeldelta}
\end{equation}

Then
\[
0<\delta_\Sigma\leq1.
\]

\begin{remark}[A Related Dominance Phenomenon in Spectral Theory]
\label{rem:cdvdtdominance}

The principle that a single dominant exponent governs the leading-order
behaviour near a singular boundary, even when the degeneracy is not
spatially homogeneous, also appears in a different context: for
metrics \(g=u^{-\alpha}\bar g\) with a single boundary hypersurface but
a continuously \emph{varying} exponent \(\alpha\) along the boundary,
\cite{CdVDT2026} show that the Weyl asymptotics of the associated
Laplace--Beltrami operator are governed by the points where \(\alpha\)
is maximal. The present setting is different in kind \textemdash{} here
the exponent is piecewise constant across finitely many hypersurfaces
meeting at a corner, and the question is metric completion rather than
spectral asymptotics \textemdash{} but the qualitative phenomenon, that
extremal rather than averaged behaviour controls the leading-order
answer, is the same.

\end{remark}

When the exponents are unequal, the metric
\eqref{eq:multisummodel}
is generally not exactly homogeneous under isotropic dilation. Indeed,
\[
D_s^\ast g_{\boldsymbol{\alpha}}^\Sigma
=
\left(
\sum_{i=1}^{k}
s^{2-\alpha_i}x_i^{-\alpha_i}
\right)
\left(
\sum_{i=1}^{k}dx_i^2+|dz|^2
\right),
\]
so in general no single scaling exponent factors from the full metric.

Nevertheless, the maximal exponent \(M\) controls the local singular
behaviour near the boundary.

\begin{remark}[A Related Dominance Phenomenon in Spectral Theory]
\label{rem:cdvdtdominance-second}

The principle that a single dominant exponent governs the leading-order
behaviour near a singular boundary, even when the degeneracy is not
spatially homogeneous, also appears in a different context: for
metrics \(g=u^{-\alpha}\bar g\) with a single boundary hypersurface but
a continuously \emph{varying} exponent \(\alpha\) along the boundary,
\cite{CdVDT2026} show that the Weyl asymptotics of the associated
Laplace--Beltrami operator are governed by the points where \(\alpha\)
is maximal. The present setting is different in kind \textemdash{} here
the exponent is piecewise constant across finitely many hypersurfaces
meeting at a corner, and the question is metric completion rather than
spectral asymptotics \textemdash{} but the qualitative phenomenon, that
extremal rather than averaged behaviour controls the leading-order
answer, is the same.

\end{remark}

%================================================
\subsubsection{Local Boundary Snowflake Law for the Sum Model}
%================================================

\begin{theorem}[Local Boundary Snowflake Law for the Sum Model]
\label{thm:multisumsnowflake}

Suppose
\[
M<2.
\]

Fix
\[
L_0>0.
\]

Then there exist constants
\[
0<c\leq C<\infty
\]
depending only on
\[
\boldsymbol{\alpha},
\quad
k,
\quad
L_0,
\]
such that for all
\[
z,z'\in\mathbb R^m
\]
satisfying
\[
0<L:=|z-z'|\leq L_0,
\]
one has
\begin{equation}
cL^{1-M/2}
\leq
d_{\partial,\Sigma}
(\overline z,\overline{z'})
\leq
CL^{1-M/2}.
\label{eq:multisumsnowflake}
\end{equation}

\end{theorem}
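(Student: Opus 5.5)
I would mimic the proof of Lemma~\ref{lem:positiveproductunitboundarydistance}, replacing exact homogeneity with the two pointwise comparisons of the sum weight valid in the region $\{r\le R_0\}$, where $r=|x|$. The starting point is the same pair of approximating points $p_\varepsilon(z)=(\varepsilon,\ldots,\varepsilon,z)$ and $p_\varepsilon(z')$. The distance $d_{\partial,\Sigma}(\overline z,\overline{z'})$ is the limit of $d_{g^\Sigma_{\boldsymbol\alpha}}(p_\varepsilon(z),p_\varepsilon(z'))$ by \eqref{eq:boundarydistancelimit}. Since $M<2$, the shrinking-diameter estimates of Section~\ref{sec:normalcollapse} apply equally in the model, so this limit exists. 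Constants will be allowed to depend on $L_0$ through a fixed reference depth, say $R_0=\max\{1,L_0\}$.

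\textbf{Upper bound.} I use the three-piece detour: go up the normal diagonal from height $\varepsilon$ to height $R=L\le L_0$, move tangentially at height $R$, and come back down. On the diagonal $x_i=s\le R_0$, every weight satisfies $s^{-\alpha_i}\le R_0^{M-\alpha_i}s^{-M}$. Hence $\sum_i s^{-\alpha_i}\le C_1 s^{-M}$ with $C_1=kR_0^{M}$ when $R_0\ge1$. Each normal leg then has length at most $\sqrt{kC_1}\,R^{\delta_\Sigma}/\delta_\Sigma$, and the tangential leg has length at most $\sqrt{C_1}\,LR^{-M/2}$. Choosing $R=L$ and letting $\varepsilon\downarrow0$ gives $d_{\partial,\Sigma}\le CL^{\delta_\Sigma}$.

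\textbf{Lower bound.} Take an arbitrary absolutely continuous competitor $\gamma_\varepsilon=(x(t),z(t))$ joining the two points, and let $R=\sup_t r(t)$. I use the estimate \eqref{eq:sumradialweightlower}: $\sum_i x_i^{-\alpha_i}\ge x_j^{-M}\ge r^{-M}$, where $j$ is a maximising index. This holds everywhere with no restriction on $r$. It gives $L(\gamma_\varepsilon)\ge\int r^{-M/2}\sqrt{|\dot r|^2+|\dot z|^2}\,dt$, the exact analogue of \eqref{eq:productcombinedintegrandlower} with $A$ replaced by $M$. From here the two-case argument of Lemma~\ref{lem:positiveproductunitboundarydistance} goes through verbatim. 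If $R\le L$, the tangential part gives $L(\gamma_\varepsilon)\ge LR^{-M/2}\ge L^{\delta_\Sigma}$. If $R>L$, Lemma~\ref{lem:weightedradialvariation} with $\beta=M/2$, applied up to the time where $r$ attains $R$, gives $L(\gamma_\varepsilon)\ge (L^{\delta_\Sigma}-(\sqrt k\,\varepsilon)^{\delta_\Sigma})/\delta_\Sigma$. Taking the infimum over competitors and letting $\varepsilon\downarrow0$ yields $cL^{\delta_\Sigma}$, with $c=\min\{1,1/\delta_\Sigma\}$.

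\textbf{Main obstacle.} The delicate point is that the lower bound must hold for competitors that wander to large $r$, where the dominance $x_j^{-M}\ge r^{-M}$ is still true but the metric is no longer comparable to $r^{-M}$ from above. I expect this to be harmless, because only the lower comparison is used there. The upper bound, by contrast, needs $r\le R_0$, which is why it requires $L\le L_0$ and why the constants depend on $L_0$: for large $L$ the smaller exponents dominate, and the true behaviour is not $L^{\delta_\Sigma}$. I would also check the edge case $M=0$, where $\delta_\Sigma=1$. There all the estimates above still hold with the convention that Lemma~\ref{lem:weightedradialvariation} is used with $\beta=0$, and the constant $C_1=k$ works directly.
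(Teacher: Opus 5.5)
Your proposal is correct and follows the paper's proof essentially step for step. The upper bound uses the same diagonal--tangential--diagonal detour at height $R=L$, with the small-scale comparison $\sum_i s^{-\alpha_i}\le C_0 s^{-M}$ (your $R_0=\max\{1,L_0\}$ is only a different bookkeeping of the paper's constant $k\max\{1,L_0^M\}$). The lower bound uses the same dominance $\sum_i x_i^{-\alpha_i}\ge x_j^{-M}\ge r^{-M}$ together with the same two-case tangential-versus-radial argument via Lemma~\ref{lem:weightedradialvariation}.
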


\begin{proof}

Set
\[
L=|z-z'|.
\]

We first prove the upper bound.

For \(\varepsilon>0\), consider
\[
p_\varepsilon(z)
=
(\varepsilon,\ldots,\varepsilon,z)
\]
and
\[
p_\varepsilon(z')
=
(\varepsilon,\ldots,\varepsilon,z').
\]

Choose the detour height
\[
R=L.
\]

Along the normal diagonal,
\[
x_1=\cdots=x_k=s.
\]

For
\[
0<s\leq L_0,
\]
we have
\[
s^{-\alpha_i}
=
s^{-M}s^{M-\alpha_i}.
\]

By definition,
\[
M=\max_{1\leq j\leq k}\alpha_j,
\]
so
\[
\alpha_i\leq M
\qquad
\text{for every }i.
\]
Hence
\[
M-\alpha_i\geq0,
\]
\[
s^{M-\alpha_i}
\leq
\max\{1,L_0^M\}.
\]

Thus
\begin{equation}
\sum_{i=1}^{k}
s^{-\alpha_i}
\leq
C_0s^{-M},
\label{eq:summodelsmallscalecomparison}
\end{equation}
where
\[
C_0
=
k\max\{1,L_0^M\}.
\]

The two normal portions therefore have total length at most
\[
C
L^{1-M/2}.
\]

At the tangential slice
\[
x_1=\cdots=x_k=L,
\]
the conformal factor satisfies
\[
\sum_{i=1}^{k}
L^{-\alpha_i}
\leq
C_0L^{-M},
\]
so the tangential segment has length at most
\[
CL^{1-M/2}.
\]

Letting
\[
\varepsilon\downarrow0
\]
gives
\begin{equation}
d_{\partial,\Sigma}
(\overline z,\overline{z'})
\leq
CL^{1-M/2}.
\label{eq:summodelupper}
\end{equation}

We now prove the lower bound.

Let
\[
\gamma_\varepsilon(t)
=
(x(t),z(t)),
\qquad
t\in[0,1],
\]
be an arbitrary absolutely continuous curve joining
\(p_\varepsilon(z)\) to \(p_\varepsilon(z')\).

Define
\[
r(t)
=
\left(
\sum_{i=1}^{k}x_i(t)^2
\right)^{1/2}
\]
and
\[
R
=
\sup_{t\in[0,1]}r(t).
\]

Choose \(j\) such that
\[
\alpha_j=M.
\]

Since
\[
x_j(t)\leq r(t)\leq R,
\]
we have
\[
\sum_{i=1}^{k}
x_i(t)^{-\alpha_i}
\geq
x_j(t)^{-M}
\geq
R^{-M}.
\]

Hence
\begin{align}
L_{g_{\boldsymbol{\alpha}}^\Sigma}
(\gamma_\varepsilon)
&\geq
R^{-M/2}
\int_0^1
|\dot z(t)|\,dt
\\
&\geq
LR^{-M/2}.
\label{eq:summodeltangentiallower}
\end{align}

Also,
\[
x_j(t)^{-M}
\geq
r(t)^{-M},
\]
and
\[
|\dot r(t)|
\leq
|\dot x(t)|.
\]

Therefore
\[
L_{g_{\boldsymbol{\alpha}}^\Sigma}
(\gamma_\varepsilon)
\geq
\int_0^1
r(t)^{-M/2}
|\dot r(t)|\,dt.
\]

Choose
\[
t_0\in[0,1]
\]
such that
\[
r(t_0)=R.
\]

Since
\[
r(0)
=
\sqrt{k}\,\varepsilon
=:
r_\varepsilon,
\]
we obtain
\[
L_{g_{\boldsymbol{\alpha}}^\Sigma}
(\gamma_\varepsilon)
\geq
\int_0^{t_0}
r(t)^{-M/2}
|\dot r(t)|\,dt.
\]

Applying Lemma~\ref{lem:weightedradialvariation} with
\[
\beta=\frac{M}{2}
\]
gives
\[
L_{g_{\boldsymbol{\alpha}}^\Sigma}
(\gamma_\varepsilon)
\geq
\frac{
R^{\delta_\Sigma}
-
r_\varepsilon^{\delta_\Sigma}
}{
\delta_\Sigma
},
\qquad
\delta_\Sigma=1-\frac{M}{2}.
\]

The two estimates are complementary: the tangential bound is
strongest when the maximal normal excursion \(R\) is small relative to
the tangential separation \(L\), whereas the radial bound controls
large excursions. Combining them gives
\begin{equation}
L_{g_{\boldsymbol{\alpha}}^\Sigma}
(\gamma_\varepsilon)
\geq
\max
\left\{
LR^{-M/2},
\frac{
R^{\delta_\Sigma}
-
r_\varepsilon^{\delta_\Sigma}
}{
\delta_\Sigma
}
\right\}.
\label{eq:summodelcombinedlower}
\end{equation}

If
\[
R\leq L,
\]
then
\[
LR^{-M/2}
\geq
L^{1-M/2}.
\]

If
\[
R>L,
\]
then
\[
R^{\delta_\Sigma}
\geq
L^{\delta_\Sigma}.
\]

Thus there exist constants
\[
c_1,c_2>0
\]
such that
\[
L_{g_{\boldsymbol{\alpha}}^\Sigma}
(\gamma_\varepsilon)
\geq
c_1L^{\delta_\Sigma}
-
c_2r_\varepsilon^{\delta_\Sigma}.
\]

Taking the infimum over all admissible curves and letting
\[
\varepsilon\downarrow0
\]
gives
\[
d_{\partial,\Sigma}
(\overline z,\overline{z'})
\geq
cL^{\delta_\Sigma}.
\]

Together with
\eqref{eq:summodelupper},
this proves
\eqref{eq:multisumsnowflake}.

\end{proof}

\begin{remark}[Local Nature of the Sum Estimate]
\label{rem:summodellocality}

The bounded-scale restriction in
Theorem~\ref{thm:multisumsnowflake}
is essential when the exponents are unequal.

Near the singular boundary, the largest exponent
\[
M=\max_i\alpha_i
\]
controls the relevant singular order. At large normal scales, however,
terms with smaller exponents decay more slowly and may dominate the
sum. Consequently, \(M\) need not determine the large-scale geometry
of the unbounded model.

The theorem therefore asserts a local boundary snowflake law, which is
precisely the form required for the analysis of accessible open faces
in the compact manifold-with-corners setting.

\end{remark}

%================================================
\subsection{Local Sum Boundary Geometry}
\label{subsec:localsumboundary}
%================================================

Let
\[
F_I^\circ
\]
be accessible for the sum interaction, so that
\[
M_I<2.
\]

\begin{theorem}[Local Sum Boundary Snowflake Geometry]
\label{thm:localsumsnowflake}

For every
\[
p\in F_I^\circ
\]
there exists a relatively compact neighbourhood
\[
V\Subset F_I^\circ
\]
of \(p\) and constants
\[
0<c\leq C<\infty
\]
such that for all sufficiently close
\[
z,z'\in V,
\]
\begin{equation}
c\,
d_{F_I}(z,z')^{1-M_I/2}
\leq
d_{\partial,\Sigma}(z,z')
\leq
C\,
d_{F_I}(z,z')^{1-M_I/2}.
\label{eq:localsumsnowflake}
\end{equation}

\end{theorem}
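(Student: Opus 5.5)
The proof will mirror that of Theorem~\ref{thm:localproductsnowflake}. The one change is that the homogeneous product model is replaced by the multi-normal sum model, and Theorem~\ref{thm:multisumsnowflake} is used in place of the exact law of Theorem~\ref{thm:exactproductsnowflake}.

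\textbf{Local model comparison.} Choose adapted coordinates $(x,z)$ near $p=(0,z_0)$ on a product cylinder $U$. By \eqref{eq:localsumcomparison} and the absorption remark following it, $g_\Sigma$ is comparable on $U$ to
\[
\Bigl(1+\sum_{i\in I}x_i^{-\alpha_i}\Bigr)\Bigl(\sum_{i\in I}dx_i^2+|dz|^2\Bigr).
\]
For $x_i\leq r_1\leq1$ we have $x_i^{-\alpha_i}\geq 1$ for each $i$. Assuming $M_I>0$, or keeping the $1$ as an extra harmless term, the constant is dominated by the singular sum, so $c_0 g^\Sigma_{\boldsymbol\alpha_I}\leq g_\Sigma\leq C_0 g^\Sigma_{\boldsymbol\alpha_I}$ on $U$. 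In the degenerate case $M_I=0$, the metric $g_\Sigma$ is comparable to $g_0$ and the claim with exponent $1$ is immediate.

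\textbf{Transfer of the model estimates.} I will rerun the proof of Theorem~\ref{thm:multisumsnowflake} directly for curves inside $U$, rather than quote it as a black box. The model is defined on all of $(0,\infty)^k\times\mathbb R^m$, while $U$ is bounded.

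For the upper bound, the detour of height $R=L$ (normal diagonal up to depth $L$, tangential segment, normal diagonal back) stays inside $U$ once $L<\eta\leq r_1/\sqrt k$ and $z,z'\in V$ with $V$ well inside the cylinder. Its $g_\Sigma$-length is therefore at most $C_0^{1/2}$ times the model bound $CL^{1-M_I/2}$, with $L_0=r_1$.

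For the lower bound, take any curve from $p_\varepsilon(z)$ to $p_\varepsilon(z')$ that remains in $U$. The two ingredients of the model lower bound hold pointwise there with the constant $c_0$:
\begin{enumerate}
\item the tangential bound $LR^{-M_I/2}$, coming from $x_j^{-M_I}\geq R^{-M_I}$;
\item the radial bound from Lemma~\ref{lem:weightedradialvariation}.
\end{enumerate}
Together they give length $\geq c\,L^{\delta_\Sigma}-c' r_\varepsilon^{\delta_\Sigma}$. Finally, $|z-z'|\asymp d_{F_I}(z,z')$ on $V$, since $g_0$ restricted to the face is comparable to $|dz|^2$ by \eqref{eq:backgroundmetriccomparison}.

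\textbf{Curves leaving $U$.} This is the main point needing care. By Lemma~\ref{lem:globallowerbound}, $g_\Sigma\geq c\,g_0$, and $\operatorname{dist}_{g_0}(\overline V',X\setminus U)>0$ for a slightly enlarged region $V'$ containing the points $p_\varepsilon(z)$ with small $\varepsilon$. Hence every escaping curve has length at least some $E_U>0$. Choose $\eta$ with $C\eta^{\delta_\Sigma}<E_U$. Then for $d_{F_I}(z,z')<\eta$, escaping curves are longer than the local competitor. They also satisfy the lower bound trivially, because $E_U>c\,d_{F_I}^{\delta_\Sigma}$.

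\textbf{Passing to the limit.} Take the infimum over all curves and let $\varepsilon\downarrow0$ using \eqref{eq:boundarydistancelimit}. This is legitimate because Theorem~\ref{thm:sumnormalcollapse} identifies the limit with $d_\partial$. The result is \eqref{eq:localsumsnowflake}.

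The only genuinely delicate step is ensuring the lower-bound argument is applied to curves confined to $U$, so that the radial sup $R$ stays below $r_1$ and the small-scale comparison \eqref{eq:summodelsmallscalecomparison} with its $L_0$-dependent constant is valid. This is handled by the stay-versus-exit dichotomy above.
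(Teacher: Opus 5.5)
Your proposal is correct and follows essentially the same route as the paper. Both arguments compare $g_\Sigma$ locally with the multi-normal sum model, use the model's detour upper bound and tangential/radial lower bounds together with $|z-z'|\asymp d_{F_I}(z,z')$, and finish with the stay-versus-exit argument, choosing $\eta$ so that $C\eta^{\delta}<E_U$. Where the paper transfers the law of Theorem~\ref{thm:multisumsnowflake} through the local comparison, you rerun the model estimates for curves confined to $U$ and enlarge $V$ to a region $V'$ that contains the interior approximants $p_\varepsilon(z)$; this is a slightly more careful version of the same argument rather than a different route.
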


\begin{proof}

Choose adapted coordinates near \(p\).

The inactive defining functions remain bounded above and below by
positive constants on a sufficiently small relatively compact
coordinate neighbourhood, while \(g_0\) is uniformly comparable to the
Euclidean coordinate metric.

Thus the local singular metric is uniformly comparable, on sufficiently
small scales, to the multi-normal sum model with active weights
\[
(\alpha_i)_{i\in I}.
\]

By
Theorem~\ref{thm:multisumsnowflake},
the model boundary metric satisfies
\[
d_{\partial,\Sigma}^{\mathrm{model}}(z,z')
\asymp
|z-z'|^{1-M_I/2}
\]
for sufficiently small tangential separations.

Since
\[
|z-z'|
\asymp
d_{F_I}(z,z')
\]
locally on the smooth open face, we obtain
\[
d_{\partial,\Sigma}(z,z')
\asymp
d_{F_I}(z,z')^{1-M_I/2}.
\]

As in the product case, the global lower comparison
\[
g_\Sigma\geq c g_0
\]
implies a positive length cost for curves leaving a fixed local
coordinate neighbourhood. To make the locality explicit, fix relatively compact coordinate
neighborhoods \(V\Subset U\) containing the boundary points under
consideration. Since \(g\geq c\,g_0\) and the \(g_0\)-distance from
\(\overline V\) to \(X\setminus U\) is positive, there is a constant
\(E_U>0\) such that every curve joining two points of \(V\) and leaving
\(U\) has \(g\)-length at least \(E_U\). If the local upper bound is
\(C\,d_{F_I}(z,z')^\delta\), choose \(\eta>0\) so that
\[
C\eta^\delta<E_U.
\]
Then, whenever \(d_{F_I}(z,z')<\eta\), every curve leaving \(U\)
has length at least
\[
E_U>C\,d_{F_I}(z,z')^\delta,
\]
whereas a local competitor has length at most
\(C\,d_{F_I}(z,z')^\delta\). Hence an escaping curve cannot lower the
infimum. Moreover, since the local two-sided constants may be chosen
with \(0<c\leq C\), every escaping competitor already satisfies
\[
L_g(\gamma)\geq E_U
>
C\,d_{F_I}(z,z')^\delta
\geq
c\,d_{F_I}(z,z')^\delta.
\]
Thus both the lower and upper local estimates remain valid for the
global completion distance. The phrase ``sufficiently close'' may
therefore be taken to mean \(d_{F_I}(z,z')<\eta\).

\end{proof}

%================================================
\subsection{Persistence of Tangential Geometry}
\label{subsec:persistencetangentialgeometry}
%================================================

The preceding results show that collapse of the active normal
directions does not identify distinct tangential points.

\begin{corollary}[Persistence of Accessible Open Faces]
\label{cor:persistencefaces}

Let
\[
F_I^\circ
\]
be an accessible open face.

If
\[
z\neq z'
\]
belong to \(F_I^\circ\), then the corresponding completion points are
distinct.

\end{corollary}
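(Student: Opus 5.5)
The quickest route goes through the global completion theorem rather than through the local snowflake estimates. Let $z\neq z'$ be points of $F_I^\circ$, and regard them as distinct points of $X$. By Theorem~\ref{thm:productaccessibility} or Theorem~\ref{thm:sumaccessibility}, every point of an accessible open face is accessible, so $z,z'\in X_g^{\mathrm{acc}}$. By the construction in Section~\ref{subsec:boundarycompletionmetric}, the completion point $\overline z$ is the class of $\{p_\varepsilon(z)\}$. Proposition~\ref{prop:existenceuniquenessaccessible} identifies this class with $\Psi_g(z)$, and likewise $\overline{z'}=\Psi_g(z')$. Proposition~\ref{prop:separationaccessiblepoints} then gives $\Psi_g(z)\neq\Psi_g(z')$. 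Equivalently, $\Phi_g$ is a well-defined map (Lemma~\ref{lem:backgroundmapwelldefined}) sending these two classes to the different background limits $z$ and $z'$, so the classes cannot coincide.

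To make the statement quantitative, I would also record the explicit lower bound. For each $\varepsilon$, the global lower comparison of Lemma~\ref{lem:globallowerbound} gives
\[
d_g\bigl(p_\varepsilon(z),p_\varepsilon(z')\bigr)\geq \sqrt{c}\,d_{g_0}\bigl(p_\varepsilon(z),p_\varepsilon(z')\bigr).
\]
Letting $\varepsilon\downarrow0$ and using \eqref{eq:boundarydistancelimit} yields
\[
d_\partial(z,z')\geq\sqrt{c}\,d_{g_0}(z,z')>0,
\]
since $d_{g_0}$ is a metric on $X$. For nearby points one can alternatively cite the lower bounds of Theorems~\ref{thm:localproductsnowflake} and \ref{thm:localsumsnowflake}, which give $d_\partial(z,z')\geq c\,d_{F_I}(z,z')^{\delta}>0$. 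That route only covers close pairs, however, so I would rely on the global comparison for arbitrary pairs.

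The one step needing care is the identification of $\overline z$, which is defined through the diagonal family, with the canonical point $\Psi_g(z)$. This is exactly the uniqueness part of Proposition~\ref{prop:existenceuniquenessaccessible}, or of the normal collapse Theorems~\ref{thm:productnormalcollapse} and \ref{thm:sumnormalcollapse}, applied to the sequence $p_{1/n}(z)\to z$. Beyond this there is no real obstacle: the corollary is a direct consequence of the injectivity already established.
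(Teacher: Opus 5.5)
Your proof is correct and is essentially the paper's argument: the paper also rests on the global lower comparison of Lemma~\ref{lem:globallowerbound}, which is exactly what drives Proposition~\ref{prop:separationaccessiblepoints}. The paper only adds the local snowflake lower bounds for nearby pairs, a case split that, as you note, is redundant. Your explicit identification of $\overline z$ with $\Psi_g(z)$ via Proposition~\ref{prop:existenceuniquenessaccessible} supplies a step the paper leaves implicit, and it also covers pairs $z,z'$ that do not lie in a common adapted chart.
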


\begin{proof}

For sufficiently close points, this follows directly from the lower
bounds in
Theorems~\ref{thm:localproductsnowflake} and
\ref{thm:localsumsnowflake}.

For arbitrary distinct points of the same open face, separation follows
from the global lower metric comparison of
Lemma~\ref{lem:globallowerbound}.

\end{proof}

%================================================
\subsection{Hausdorff Dimension of Accessible Open Faces}
\label{subsec:boundaryhausdorffdimension}
%================================================

The local snowflake laws determine the Hausdorff dimension of accessible
open faces equipped with their induced completion metrics.

We use the following standard snowflake-dimension fact~\cite{heinonen2001}. If
\[
d'
\asymp
d^\delta
\]
locally, with
\[
0<\delta\leq1,
\]
then a set of \(d\)-diameter \(r\) has \(d'\)-diameter comparable to
\(r^\delta\). Consequently, the \(s\)-dimensional Hausdorff sums for
\(d'\) are comparable, at small scales, to the
\(\delta s\)-dimensional Hausdorff sums for \(d\). Hence
\[
\dim_H(X,d')
=
\frac{1}{\delta}\dim_H(X,d).
\]
For a smooth \(m\)-dimensional Riemannian manifold,
\[
\dim_H(X,d)=m,
\]
and therefore
\[
\dim_H(X,d')
=
\frac{m}{\delta}.
\]

Since every open face is second countable, it admits a countable cover
by relatively compact coordinate neighbourhoods on each of which the
same local snowflake exponent holds. Using
\[
\dim_H\!\left(\bigcup_{j=1}^{\infty}E_j\right)
=
\sup_j\dim_H(E_j),
\]
and the fact that each nonempty coordinate neighbourhood has the same
local Hausdorff dimension, the Hausdorff dimension of the entire open
face is that common value.

\begin{corollary}[Product Boundary Hausdorff Dimension]
\label{cor:productboundarydimension}

Let
\[
F_I^\circ
\]
be an accessible open face of dimension
\[
m
\]
for the product interaction.

Then
\begin{equation}
\dim_H
\bigl(
F_I^\circ,
d_{\partial,\Pi}
\bigr)
=
\frac{m}{
1-\frac{A_I}{2}
}.
\label{eq:productboundarydimension}
\end{equation}

\end{corollary}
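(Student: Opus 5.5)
The plan is to deduce the dimension formula from the local snowflake law of Theorem~\ref{thm:localproductsnowflake} combined with the standard snowflake-dimension fact recalled just before the corollary. Write \(\delta=1-A_I/2\in(0,1]\). Throughout we identify each \(z\in F_I^\circ\) with its completion point \(\overline z\), which is legitimate by Corollary~\ref{cor:persistencefaces}; in particular \(d_{\partial,\Pi}\) is a genuine metric on \(F_I^\circ\).

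\textbf{Step 1: local dimension.} For each \(p\in F_I^\circ\), Theorem~\ref{thm:localproductsnowflake} supplies a relatively compact neighbourhood \(V_p\Subset F_I^\circ\), an \(\eta_p>0\) and constants \(0<c\leq C\) such that
\[
c\,d_{F_I}(z,z')^{\delta}\leq d_{\partial,\Pi}(z,z')\leq C\,d_{F_I}(z,z')^{\delta}
\]
whenever \(z,z'\in V_p\) and \(d_{F_I}(z,z')<\eta_p\). Hausdorff dimension depends only on covers by sets of arbitrarily small diameter. So I will restrict to covers of \(V_p\) by sets of \(d_{F_I}\)-diameter less than \(\eta_p\). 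On such sets the two-sided bound shows that the \(d_{\partial,\Pi}\)-diameter is comparable to the \(\delta\)-th power of the \(d_{F_I}\)-diameter.

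One check is needed so that small covers correspond in both metrics. A set of small \(d_{\partial,\Pi}\)-diameter must also have small \(d_{F_I}\)-diameter. The lower bound gives this only for pairs that are already \(d_{F_I}\)-close, so I would close the gap using the global lower comparison of Lemma~\ref{lem:globallowerbound}, which gives \(d_{\partial,\Pi}\geq c'\,d_{g_0}\geq c''\,d_{F_I}\) locally. With that in hand, \(\mathcal H^{s}_{d_{\partial,\Pi}}(V_p)\) and \(\mathcal H^{\delta s}_{d_{F_I}}(V_p)\) are comparable up to constants. Since \((V_p,d_{F_I})\) is a piece of a smooth \(m\)-dimensional Riemannian manifold, it has dimension \(m\), and therefore \(\dim_H(V_p,d_{\partial,\Pi})=m/\delta\).

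\textbf{Step 2: globalization.} The face \(F_I^\circ\) is second countable, so it is covered by countably many such \(V_{p_j}\). Countable stability of Hausdorff dimension then gives
\[
\dim_H(F_I^\circ,d_{\partial,\Pi})=\sup_j\dim_H(V_{p_j},d_{\partial,\Pi})=\frac{m}{1-A_I/2}.
\]

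\textbf{Main obstacle.} The delicate point is the scale compatibility in Step 1. Theorem~\ref{thm:localproductsnowflake} holds only for ``sufficiently close'' points, while the Hausdorff-measure comparison must run in both directions: small sets for \(d_{\partial,\Pi}\) have to be small for \(d_{F_I}\) as well. I would handle this, as above, by invoking the global lower bound \(g_\Pi\geq c_\Pi g_0\), which makes the identity map from \((V_p,d_{\partial,\Pi})\) to \((V_p,d_{F_I})\) Lipschitz. The remaining bookkeeping is routine.
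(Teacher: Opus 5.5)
Your proposal is correct and follows the paper's argument: the local snowflake law of Theorem~\ref{thm:localproductsnowflake}, the standard snowflake-dimension fact, and countable stability over a countable cover of the face by coordinate neighbourhoods. Your extra check is valid and fills a step the paper leaves implicit: you use Lemma~\ref{lem:globallowerbound} to show the identity $(V_p,d_{\partial,\Pi})\to(V_p,d_{F_I})$ is Lipschitz, so that small covers correspond in both metrics.
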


\begin{proof}

By
Theorem~\ref{thm:localproductsnowflake},
the induced metric is locally bi-Lipschitz equivalent to
\[
d_{F_I}^{\,1-A_I/2}.
\]

Hence
\[
\dim_H
\bigl(
F_I^\circ,
d_{\partial,\Pi}
\bigr)
=
\frac{m}{
1-A_I/2
}.
\]

\end{proof}

\begin{corollary}[Sum Boundary Hausdorff Dimension]
\label{cor:sumboundarydimension}

Let
\[
F_I^\circ
\]
be an accessible open face of dimension
\[
m
\]
for the sum interaction.

Then
\begin{equation}
\dim_H
\bigl(
F_I^\circ,
d_{\partial,\Sigma}
\bigr)
=
\frac{m}{
1-\frac{M_I}{2}
}.
\label{eq:sumboundarydimension}
\end{equation}

\end{corollary}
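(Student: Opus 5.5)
The plan mirrors the proof of Corollary~\ref{cor:productboundarydimension}, with Theorem~\ref{thm:localsumsnowflake} replacing Theorem~\ref{thm:localproductsnowflake} and \(\delta_\Sigma=1-M_I/2\) replacing \(1-A_I/2\). Since \(F_I^\circ\) is accessible for \(g_\Sigma\), we have \(M_I<2\), so \(0<\delta_\Sigma\leq1\). For each \(p\in F_I^\circ\), Theorem~\ref{thm:localsumsnowflake} gives a relatively compact neighbourhood \(V_p\Subset F_I^\circ\), constants \(0<c\leq C\), and a threshold \(\eta_p>0\) such that
\[
c\,d_{F_I}(z,z')^{\delta_\Sigma}\leq d_{\partial,\Sigma}(z,z')\leq C\,d_{F_I}(z,z')^{\delta_\Sigma}
\]
whenever \(z,z'\in V_p\) and \(d_{F_I}(z,z')<\eta_p\). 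After shrinking \(V_p\), I will assume its \(d_{F_I}\)-diameter is below \(\eta_p\), so that the comparison holds for all pairs in \(V_p\).

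The first step is to compute \(\dim_H(V_p,d_{\partial,\Sigma})=m/\delta_\Sigma\) by the snowflake fact quoted before Corollary~\ref{cor:productboundarydimension}. Any cover of a subset of \(V_p\) by sets \(E_j\subset V_p\) satisfies
\[
\operatorname{diam}_{d_\partial}E_j\asymp(\operatorname{diam}_{d_{F_I}}E_j)^{\delta_\Sigma}.
\]
Hence the \(s\)-dimensional Hausdorff content for \(d_{\partial,\Sigma}\) is comparable to the \(\delta_\Sigma s\)-dimensional content for \(d_{F_I}\). The mesh condition also transfers, since \(d_{F_I}\)-small diameters are equivalent to \(d_\partial\)-small diameters via the power law. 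Covers may be intersected with \(V_p\), so we can restrict to subsets of \(V_p\). Because \((V_p,d_{F_I})\) is an open subset of a smooth \(m\)-dimensional Riemannian manifold, it has Hausdorff dimension \(m\), and therefore \(\dim_H(V_p,d_{\partial,\Sigma})=m/\delta_\Sigma\).

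The second step globalises. \(F_I^\circ\) is second countable, so countably many \(V_{p_j}\) cover it. Countable stability of Hausdorff dimension gives \(\dim_H(F_I^\circ,d_{\partial,\Sigma})=\sup_j m/\delta_\Sigma=m/(1-M_I/2)\).

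The only delicate point is that Hausdorff dimension of \(V_{p_j}\) must be computed with the restricted metric \(d_{\partial,\Sigma}\), not an intrinsic one. This is automatic, because Hausdorff measure of a subset uses diameters in the ambient metric. The threshold issue, that the comparison is only valid for \(d_{F_I}(z,z')<\eta_p\), is handled by the shrinking of \(V_p\) above. I therefore expect no genuine obstacle; the argument is a routine transfer of Theorem~\ref{thm:localsumsnowflake}.
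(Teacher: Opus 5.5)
Your proposal is correct and follows the paper's own argument. The paper likewise uses Theorem~\ref{thm:localsumsnowflake} to make $d_{\partial,\Sigma}$ locally bi-Lipschitz to $d_{F_I}^{\,1-M_I/2}$, then applies the standard snowflake-dimension fact on each piece, and finally passes to all of $F_I^\circ$ through a countable cover and countable stability of Hausdorff dimension. Your explicit shrinking of $V_p$ below the threshold $\eta_p$ makes one step more careful than the paper states it, but the approach is the same.
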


\begin{proof}

By
Theorem~\ref{thm:localsumsnowflake},
the induced metric is locally bi-Lipschitz equivalent to
\[
d_{F_I}^{\,1-M_I/2}.
\]

Hence
\[
\dim_H
\bigl(
F_I^\circ,
d_{\partial,\Sigma}
\bigr)
=
\frac{m}{
1-M_I/2
}.
\]

\end{proof}

\begin{remark}[Facewise Nature of the Dimension Formula]
\label{rem:facewisehausdorffdimension}

The preceding formulas concern an individual accessible open face
equipped with the induced completion metric.

No single formula is asserted for the Hausdorff dimension of the entire
completion boundary. Different accessible strata may have different
smooth dimensions and different effective singular weights, and hence
different local Hausdorff dimensions.

\end{remark}

%================================================
\section{Boundary Incidence and Deeper Corner Geometry}
\label{sec:incidence}
%================================================

The preceding sections determine which open faces occur at finite
distance and describe the intrinsic metric geometry carried by those
that survive in the completion. We now study how the two interaction
laws affect the incidence relations among boundary strata.

The distinction is most pronounced when several individually
subcritical hypersurfaces meet.

For the product interaction, the active weights accumulate:
\[
A_I
=
\sum_{i\in I}\alpha_i.
\]
Thus a collection of individually accessible hypersurfaces may determine
an inaccessible deeper open face if the cumulative weight crosses the
critical threshold \(2\).

For the sum interaction, the effective weight is
\[
M_I
=
\max_{i\in I}\alpha_i.
\]
Hence activating additional hypersurfaces with subcritical weights does
not destroy accessibility.

Throughout this section, incidence statements are formulated in terms
of open faces and complete active index sets. A full set-theoretic
intersection of boundary hypersurfaces may contain deeper strata lying
on additional hypersurfaces, and accessibility at such points is always
determined by the complete active index set.

%================================================
\subsection{Loss of Incidence for the Product Interaction}
\label{subsec:productincidenceloss}
%================================================

Let
\[
I\subseteq\{1,\ldots,N\}
\]
and suppose
\[
F_I^\circ\neq\varnothing.
\]

Recall that
\[
F_I^\circ
\text{ is accessible for }g_\Pi
\quad\Longleftrightarrow\quad
A_I<2.
\]

Thus accessibility need not be preserved when additional positive
weights become active.

\begin{proposition}[Loss of Accessibility at a Deeper Product Face]
\label{prop:productincidenceloss}

Suppose
\[
I=\{i_1,\ldots,i_k\}
\]
and
\[
F_I^\circ\neq\varnothing.
\]

Assume
\[
\alpha_{i_j}<2
\qquad
\text{for every }j,
\]
but
\[
A_I
=
\sum_{j=1}^{k}\alpha_{i_j}
\geq2.
\]

Then each codimension-one hypersurface
\[
H_{i_j}^\circ
\]
is individually accessible for the product interaction, while the
deeper open face
\[
F_I^\circ
\]
is inaccessible.

\end{proposition}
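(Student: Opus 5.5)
The statement is a direct specialization of the facewise criterion \eqref{eq:productfaceaccessibility}, that is, of Theorem~\ref{thm:productaccessibility}. The plan is to apply that criterion twice: once to the deeper open face \(F_I^\circ\), and once to the open codimension-one face associated with each individual hypersurface.

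First I fix what \(H_{i_j}^\circ\) means. I read it as the open face \(F_{\{i_j\}}^\circ\), the points of \(H_{i_j}\) lying on no other boundary hypersurface. This matches the convention stressed in Remark~\ref{rem:openfacesversusintersections}: accessibility is always decided by the complete active index set, so it is not asserted for the whole closed hypersurface.

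I also need \(F_{\{i_j\}}^\circ\neq\varnothing\). Take \(p\in F_I^\circ\) and adapted coordinates \((x,z)\) near \(p\), in which \(H_{i_j}\cap U=\{x_j=0\}\). After shrinking \(U\), no inactive hypersurface meets it. The point with \(x_j=0\), all other active coordinates \(x_a=s>0\) small, and \(z=z_0\) then has active index set exactly \(\{i_j\}\). So this open face is nonempty and in fact accumulates at \(F_I^\circ\), which is what makes the incidence statement meaningful.

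For such a point \(q\), the active weight is \(A(q)=\alpha_{i_j}<2\). Theorem~\ref{thm:productaccessibility} (equivalently \eqref{eq:productpointwiseaccessibility}) then shows that \(q\) is accessible. Since this holds at every point of \(F_{\{i_j\}}^\circ\), that open face is accessible in the sense of Definition~\ref{def:accessibleopenface}. For a point \(p\in F_I^\circ\), we have \(A(p)=A_I\geq2\), so by the same theorem \(p\) is not accessible. Hence \(F_I^\circ\) is inaccessible, and in fact no point of it is accessible.

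There is no real analytic obstacle, since all of the estimates are already contained in Theorem~\ref{thm:productaccessibility}. The only point needing care is the interpretive one above. One must not claim that every point of \(H_{i_j}\) is accessible, because \(H_{i_j}\) contains \(F_I^\circ\) itself. The statement should therefore be read, and proved, at the level of the open face \(F_{\{i_j\}}^\circ\).
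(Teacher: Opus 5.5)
Your proposal is correct and matches the paper's proof: apply Theorem~\ref{thm:productaccessibility} to points with active set $\{i_j\}$ (weight $\alpha_{i_j}<2$) and to $F_I^\circ$ (weight $A_I\geq 2$). Reading $H_{i_j}^\circ$ as the open face $F_{\{i_j\}}^\circ$, and checking in adapted coordinates that this face is nonempty near $F_I^\circ$, are sound refinements that the paper leaves implicit.
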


\begin{proof}

For each \(j\),
\[
\alpha_{i_j}<2,
\]
so
\[
H_{i_j}^\circ
\]
is accessible by
Theorem~\ref{thm:productaccessibility}.

On the other hand,
\[
A_I\geq2,
\]
so the open face
\[
F_I^\circ
\]
is inaccessible by the same theorem.

\end{proof}

\begin{example}[Codimension-Two Incidence Loss]
\label{ex:codimensiontwoincidenceloss}

Suppose two boundary hypersurfaces \(H_1\) and \(H_2\) meet and
\[
\alpha_1<2,
\qquad
\alpha_2<2,
\]
but
\[
\alpha_1+\alpha_2\geq2.
\]

Then the open hypersurface strata
\[
H_1^\circ,
\qquad
H_2^\circ
\]
are both accessible for the product metric, whereas their common
codimension-two open face
\[
F_{\{1,2\}}^\circ
\]
is inaccessible.

Thus the finite-distance completion retains the two incident
hypersurface strata but omits their deeper common open face.

\end{example}

%================================================
\subsection{Minimal Inaccessible Product Faces}
\label{subsec:minimalinaccessiblefaces}
%================================================

The preceding phenomenon may occur first at any codimension.

\begin{definition}[Minimal Inaccessible Product Face]
\label{def:minimalinaccessibleproductface}

An open face
\[
F_I^\circ
\]
is called \emph{minimally inaccessible for the product interaction} if
\[
A_I\geq2
\]
while
\[
A_J<2
\]
for every proper subset
\[
J\subsetneq I
\]
for which
\[
F_J^\circ\neq\varnothing.
\]

\end{definition}

Thus all proper incident open faces are accessible, but the first
simultaneous activation of the complete set \(I\) crosses the critical
threshold.

\begin{proposition}[Existence Criterion for Minimal Inaccessibility]
\label{prop:minimalinaccessibilitycriterion}

Let
\[
F_I^\circ\neq\varnothing.
\]

Then \(F_I^\circ\) is minimally inaccessible for the product
interaction precisely when
\[
\sum_{i\in I}\alpha_i\geq2
\]
and
\[
\sum_{j\in J}\alpha_j<2
\]
for every proper incident active set
\[
J\subsetneq I.
\]

\end{proposition}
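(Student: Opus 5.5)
This is essentially an unwinding of Definition~\ref{def:minimalinaccessibleproductface} combined with the facewise product criterion \eqref{eq:productfaceaccessibility}. I will prove the two implications separately. In the first step I simply observe that the condition in the definition is stated purely in terms of the numbers $A_I$ and $A_J$. Since $A_I=\sum_{i\in I}\alpha_i$ and $A_J=\sum_{j\in J}\alpha_j$ by \eqref{eq:effectiveproductweight}, the two conditions in the proposition are literally the defining conditions. This holds once ``proper incident active set'' is read as ``proper subset $J\subsetneq I$ with $F_J^\circ\neq\varnothing$'', exactly as in the definition.

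For extra content I will also record the geometric meaning, which is presumably the intended point. By Theorem~\ref{thm:productaccessibility}, together with the facewise form \eqref{eq:productfaceaccessibility}, the condition $A_I\geq2$ is equivalent to $F_I^\circ$ being inaccessible for $g_\Pi$. Likewise, $A_J<2$ for each nonempty proper incident face $F_J^\circ$ is equivalent to $F_J^\circ$ being accessible. Hence $F_I^\circ$ is minimally inaccessible precisely when it is inaccessible while every nonempty open face $F_J^\circ$ with $J\subsetneq I$ is accessible.

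The one point needing care, and the step I expect to be the only real obstacle, is the meaning of ``incident''. I must check that the faces $F_J^\circ$ with $J\subsetneq I$ that actually meet the closure of $F_I^\circ$ are the relevant ones. This matches the definition, which quantifies over proper subsets with $F_J^\circ\neq\varnothing$. In the local model $[0,\infty)^k\times\mathbb R^{n-k}$ near a point of $F_I^\circ$, every $J\subseteq I$ is realized by a nonempty face near $p$: take $x_i=0$ for $i\in J$ and $x_i>0$ otherwise. So near $F_I^\circ$ every proper subset is incident, and the restriction $F_J^\circ\neq\varnothing$ is automatic. With this identification the two formulations coincide.

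Finally, I will note that monotonicity simplifies the check. Since all $\alpha_i\geq0$, we have $A_J\leq A_{J'}$ whenever $J\subseteq J'$. It therefore suffices to verify $A_{I\setminus\{i\}}<2$ for each $i\in I$. This is a convenient restatement of the criterion but is not needed for the equivalence itself.
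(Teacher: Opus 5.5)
Your proposal is correct and matches the paper's one-line proof: the two conditions are the definition of minimal inaccessibility written out via $A_I=\sum_{i\in I}\alpha_i$, and applying Theorem~\ref{thm:productaccessibility} to $I$ and to each proper $J$ gives the geometric reading (inaccessible face, accessible proper incident faces). Your adapted-coordinate check that every $J\subseteq I$ gives a nonempty face $F_J^\circ$ near $p$ is a correct extra, and so is the monotonicity reduction to $J=I\setminus\{i\}$; one wording fix: the incidence relation is $F_I^\circ\subseteq\overline{F_J^\circ}$, not that $F_J^\circ$ meets $\overline{F_I^\circ}$, which never happens for $J\subsetneq I$.
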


\begin{proof}

This is an immediate consequence of the product accessibility criterion
applied to \(I\) and to each proper incident active set \(J\).

\end{proof}

%================================================
\subsection{A Homogeneous Codimension-Two Product Model}
\label{subsec:homogeneousincidentfacemodel}
%================================================

To study the metric behaviour of two accessible incident faces near an
inaccessible or critical common corner, consider the homogeneous model
\[
Q
=
(0,\infty)^2
\]
with metric
\begin{equation}
g_{\alpha,\beta}
=
x^{-\alpha}y^{-\beta}
\left(
dx^2+dy^2
\right),
\label{eq:incidenthomogeneousmetric}
\end{equation}
where
\[
0\leq\alpha<2,
\qquad
0\leq\beta<2.
\]

The individual boundary rays
\[
\{x=0,y>0\},
\qquad
\{y=0,x>0\}
\]
are therefore accessible.

Set
\begin{equation}
A
=
\alpha+\beta.
\label{eq:incidenttotalweight}
\end{equation}

For \(s>0\), let
\[
P_s
\]
denote the completion point corresponding to the boundary point
\[
(0,s),
\]
and let
\[
Q_s
\]
denote the completion point corresponding to
\[
(s,0).
\]

We study the distance
\[
d(P_s,Q_s)
\]
as
\[
s\downarrow0.
\]

%================================================
\subsection{Exact Scaling of Incident-Face Distance}
\label{subsec:incidentfacescaling}
%================================================

Define
\[
D_s(x,y)
=
(sx,sy).
\]

Then
\begin{align}
D_s^\ast g_{\alpha,\beta}
&=
(sx)^{-\alpha}(sy)^{-\beta}
s^2
\left(
dx^2+dy^2
\right)
\\
&=
s^{2-(\alpha+\beta)}
g_{\alpha,\beta}
\\
&=
s^{2-A}
g_{\alpha,\beta}.
\end{align}

Therefore
\begin{equation}
d(D_sp,D_sq)
=
s^{1-A/2}
d(p,q).
\label{eq:incidentdistancescaling}
\end{equation}

The same identity shows that \(D_s\) is a similarity of the interior
metric space with Lipschitz inverse \(D_{1/s}\). It therefore extends
uniquely to a bijective similarity of the metric completion, so the
notation \(D_s(P_1)\) and \(D_s(Q_1)\) below is well defined.

Since
\[
D_s(P_1)=P_s,
\qquad
D_s(Q_1)=Q_s,
\]
we obtain
\begin{equation}
d(P_s,Q_s)
=
s^{1-A/2}
d(P_1,Q_1).
\label{eq:incidentexactscaling}
\end{equation}

To use this formula, we must verify that
\[
d(P_1,Q_1)
\]
is strictly positive and finite.

%================================================
\subsection{Nondegeneracy of the Unit Incident-Face Distance}
\label{subsec:incidentunitdistance}
%================================================

\begin{lemma}[Positive and Finite Unit Incident-Face Distance]
\label{lem:positiveincidentunitdistance}

For the homogeneous metric
\[
g_{\alpha,\beta}
=
x^{-\alpha}y^{-\beta}
(dx^2+dy^2),
\]
with
\[
0\leq\alpha<2,
\qquad
0\leq\beta<2,
\]
the completion points
\[
P_1=(0,1),
\qquad
Q_1=(1,0)
\]
satisfy
\begin{equation}
0
<
d(P_1,Q_1)
<
\infty.
\label{eq:positiveincidentunitdistance}
\end{equation}

\end{lemma}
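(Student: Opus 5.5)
The plan is to prove the two inequalities in \eqref{eq:positiveincidentunitdistance} separately. Both go through the approximating interior points
\[
p_\varepsilon=(\varepsilon,1),
\qquad
q_\varepsilon=(1,\varepsilon),
\qquad
0<\varepsilon<\tfrac12 .
\]
First I will justify that \(p_\varepsilon\to P_1\) and \(q_\varepsilon\to Q_1\) in the completion. Near \((0,1)\) the coordinate \(y\) stays comparable to \(1\), so the factor \(y^{-\beta}\) is bounded above and below. The model is therefore locally comparable to the codimension-one product metric with weight \(\alpha<2\). The shrinking-neighbourhood estimate of Proposition~\ref{prop:shrinkingneighborhoods}, together with the existence/uniqueness argument of Proposition~\ref{prop:existenceuniquenessaccessible}, is purely local, so it applies verbatim in the model. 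Hence every interior sequence tending to \((0,1)\) is Cauchy and represents \(P_1\); symmetrically, every interior sequence tending to \((1,0)\) represents \(Q_1\). Consequently
\[
d(P_1,Q_1)=\lim_{\varepsilon\downarrow0}d(p_\varepsilon,q_\varepsilon),
\]
and it suffices to bound \(d(p_\varepsilon,q_\varepsilon)\) above and below uniformly in \(\varepsilon\).

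\emph{Upper bound.} I will use an explicit two-segment path from \(p_\varepsilon\) to \(q_\varepsilon\).
\begin{itemize}
\item First go horizontally from \((\varepsilon,1)\) to \((1,1)\). On this segment \(y=1\), so the speed is \(x^{-\alpha/2}\).
\item Then go vertically from \((1,1)\) to \((1,\varepsilon)\). On this segment \(x=1\), so the speed is \(y^{-\beta/2}\).
\end{itemize}
The total length is at most
\[
\int_0^1 x^{-\alpha/2}\,dx+\int_0^1 y^{-\beta/2}\,dy
=\frac{1}{1-\alpha/2}+\frac{1}{1-\beta/2}<\infty .
\]
This bound is uniform in \(\varepsilon\), since \(\alpha<2\) and \(\beta<2\). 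Note that this step does not use \(A<2\), so finiteness holds even when the corner itself is inaccessible.

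\emph{Lower bound.} Work in the box \(B=(0,2]^2\). On \(B\) we have \(x^{-\alpha}y^{-\beta}\geq 2^{-A}\), so on \(B\) the metric dominates \(2^{-A}\) times the Euclidean metric. Let \(\gamma\) be any curve from \(p_\varepsilon\) to \(q_\varepsilon\). There are two cases.
\begin{itemize}
\item If \(\gamma\) stays in \(B\), its length is at least
\[
2^{-A/2}\,|p_\varepsilon-q_\varepsilon|=2^{-A/2}\sqrt2\,(1-\varepsilon)\geq 2^{-A/2}\tfrac{\sqrt2}{2}.
\]
\item If \(\gamma\) leaves \(B\), it must leave through \(\{x=2\}\) or \(\{y=2\}\). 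Both lie at Euclidean distance \(\geq1\) from \(p_\varepsilon\). The initial subarc of \(\gamma\) up to its first exit lies in \(B\), so it has length at least \(2^{-A/2}\).
\end{itemize}
Hence \(d(p_\varepsilon,q_\varepsilon)\geq 2^{-A/2}/\sqrt2\) for all small \(\varepsilon\). Letting \(\varepsilon\downarrow0\) gives \(d(P_1,Q_1)>0\), with no assumption on \(A\).

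The only delicate point is the first step: confirming that the completion points \(P_1\) and \(Q_1\) are well defined in the noncompact model and are realized as limits of \(p_\varepsilon\) and \(q_\varepsilon\). Equivalently, one must check that the local collapse estimates transfer to the model. I would handle this by restricting to the compact coordinate region \(\{1/2\leq y\leq 2\}\) near \((0,1)\), where the model is uniformly comparable to a single-weight product metric, and invoking Lemma~\ref{lem:productshrinkingnormaldiameter} there. The region \(\{1/2\leq x\leq 2\}\) near \((1,0)\) is handled the same way. The upper and lower estimates themselves are elementary.
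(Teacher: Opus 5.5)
Your proof is correct and follows essentially the same route as the paper: the upper bound comes from a path assembled from finite-length approaches to the two individually accessible rays, and your L-shaped path through $(1,1)$ is an explicit version of the paper's ``endpoint segments plus compact middle region'' argument; the lower bound comes from a stay-versus-exit dichotomy on a bounded region where $x^{-\alpha}y^{-\beta}\geq 2^{-A}$. The differences are cosmetic: you use the box $(0,2]^2$ with a Euclidean distance-to-boundary estimate in the exit case, whereas the paper uses the disc $\{r\leq2\}$ with Lemma~\ref{lem:weightedradialvariation}, and you make explicit that $P_1,Q_1$ are the completion limits of $(\varepsilon,1)$ and $(1,\varepsilon)$, which the paper leaves implicit.
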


\begin{proof}

We first prove finiteness.

Choose
\[
R>1.
\]

Starting near \(P_1\), move a short finite-distance segment into the
interior, travel through a compact interior region joining a
neighbourhood of \((0,1)\) to a neighbourhood of \((1,0)\), and then
approach \(Q_1\).

Since
\[
\alpha<2
\]
and
\[
\beta<2,
\]
both boundary rays are individually accessible. Hence the two endpoint
segments have finite length, while the middle segment lies in a compact
subset of the interior where the metric is smooth.

Thus
\[
d(P_1,Q_1)<\infty.
\]

We now prove strict positivity.

Let
\[
\gamma
\]
be any absolutely continuous interior curve joining sufficiently close
interior approximations of \(P_1\) and \(Q_1\).

Set
\[
r
=
\sqrt{x^2+y^2}.
\]

We split into two cases.

First suppose that
\[
r(t)\leq2
\]
along the entire curve.

Then
\[
x(t)\leq2,
\qquad
y(t)\leq2,
\]
so
\[
x(t)^{-\alpha}y(t)^{-\beta}
\geq
2^{-(\alpha+\beta)}.
\]

Hence
\[
g_{\alpha,\beta}
\geq
2^{-A}
(dx^2+dy^2)
\]
along the curve.

The Euclidean distance between interior points approaching
\[
(0,1)
\]
and
\[
(1,0)
\]
tends to
\[
\sqrt2.
\]

We now make the lower bound uniform in the endpoint regularization.

Choose the regularization parameter so small
that the interior approximations \(P_\varepsilon,Q_\varepsilon\)
satisfy
\[
|P_\varepsilon-(0,1)|<\frac14,
\qquad
|Q_\varepsilon-(1,0)|<\frac14.
\]
Then
\[
|P_\varepsilon-Q_\varepsilon|
\geq \sqrt2-\frac12=:d_0>0.
\]
If the curve remains in \(r\leq2\), the estimate
\(g_{\alpha,\beta}\geq2^{-A}(dx^2+dy^2)\) gives
\[
L_{g_{\alpha,\beta}}(\gamma)
\geq2^{-A/2}d_0=:c_0>0.
\]
If the curve reaches \(r=2\), decrease the same regularization
threshold if necessary so that \(r_{\mathrm{initial}}\leq5/4\). The
length accumulated before the first hitting time of \(r=2\) then
obeys, by Lemma~\ref{lem:weightedradialvariation},
\[
L_{g_{\alpha,\beta}}(\gamma)
\geq\int_{5/4}^{2}s^{-A/2}\,ds=:c_1>0.
\]
Thus every sufficiently regularized competitor satisfies the uniform
bound
\[
L_{g_{\alpha,\beta}}(\gamma)\geq\min\{c_0,c_1\}>0,
\]
independently of the endpoint regularization.

Passing to the completion limit gives
\[
d(P_1,Q_1)>0.
\]

\end{proof}

%================================================
\subsection{Incident-Face Trichotomy}
\label{subsec:incidentfacetrichotomy}
%================================================

Combining
\eqref{eq:incidentexactscaling}
with
Lemma~\ref{lem:positiveincidentunitdistance}
gives a complete trichotomy.

\begin{remark}
The preceding positivity lemma uses only the individual accessibility
assumptions \(\alpha<2\) and \(\beta<2\). It is independent of the value
of \(A=\alpha+\beta\), and therefore applies uniformly in the
subcritical, critical, and supercritical regimes distinguished below.
\end{remark}

\begin{theorem}[Incident-Face Distance Trichotomy]
\label{thm:incidentfacetrichotomy}

Let
\[
A=\alpha+\beta.
\]

Then
\begin{equation}
d(P_s,Q_s)
=
C_{\alpha,\beta}
s^{1-A/2},
\label{eq:incidentfacetrichotomyformula}
\end{equation}
where
\[
C_{\alpha,\beta}
=
d(P_1,Q_1)
\in(0,\infty).
\]

Consequently:

\begin{enumerate}
    \item If
    \[
    A<2,
    \]
    then
    \[
    d(P_s,Q_s)\longrightarrow0.
    \]

    \item If
    \[
    A=2,
    \]
    then
    \[
    d(P_s,Q_s)
    =
    C_{\alpha,\beta}>0
    \]
    for every \(s>0\).

    \item If
    \[
    A>2,
    \]
    then
    \[
    d(P_s,Q_s)\longrightarrow+\infty.
    \]
\end{enumerate}

\end{theorem}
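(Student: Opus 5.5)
The formula follows almost immediately from two facts already established: the exact scaling identity \eqref{eq:incidentexactscaling} and the nondegeneracy Lemma~\ref{lem:positiveincidentunitdistance}. The trichotomy is then a statement about the sign of the exponent \(1-A/2\).

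First I would justify that the dilation \(D_s\) acts on the completion points as claimed. Its extension to the completion is the bijective similarity constructed in Section~\ref{subsec:incidentfacescaling}. The identity \(D_s(P_1)=P_s\) is checked on representatives. Take an interior sequence \(q_n\to(0,1)\) in the background topology. By Proposition~\ref{prop:existenceuniquenessaccessible}, applied to the model (or directly by the normal-collapse estimate, since \(\beta<2\)), this sequence represents \(P_1\). Then \(D_s q_n\to(0,s)\), so \(D_s q_n\) represents \(P_s\). The same argument gives \(D_s(Q_1)=Q_s\), using \(\alpha<2\).

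Next, combining these identities with the scaling law \eqref{eq:incidentdistancescaling}, extended to the completion by continuity of the metric, yields
\[
d(P_s,Q_s)=s^{1-A/2}\,d(P_1,Q_1).
\]
I then set \(C_{\alpha,\beta}=d(P_1,Q_1)\). Lemma~\ref{lem:positiveincidentunitdistance} gives \(C_{\alpha,\beta}\in(0,\infty)\), and as the preceding remark notes, that lemma uses only \(\alpha,\beta<2\). This proves \eqref{eq:incidentfacetrichotomyformula} for every \(A\).

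Finally, the three cases follow by inspecting the exponent as \(s\downarrow0\):
\begin{enumerate}
\item if \(A<2\), the exponent \(1-A/2\) is positive, so \(s^{1-A/2}\to0\);
\item if \(A=2\), the exponent is zero, so \(d(P_s,Q_s)\equiv C_{\alpha,\beta}>0\);
\item if \(A>2\), the exponent is negative, so \(s^{1-A/2}\to\infty\), and strict positivity of \(C_{\alpha,\beta}\) gives divergence.
\end{enumerate}

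The main obstacle is the identification \(D_s(P_1)=P_s\) inside the completion, which requires that boundary points of each ray correspond to unique completion classes regardless of \(A\). The model is not compact and the corner may be inaccessible, so I would rely on the local collapse estimates near \((0,1)\) and \((1,0)\). These points lie away from the corner, so only the single weights \(\beta\) and \(\alpha\), respectively, are active there.
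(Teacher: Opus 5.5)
Your proposal is correct and follows the paper's own argument: the exact dilation identity \(d(P_s,Q_s)=s^{1-A/2}\,d(P_1,Q_1)\) combined with Lemma~\ref{lem:positiveincidentunitdistance} (valid for every \(A\)), then the sign of \(1-A/2\). Your check of \(D_s(P_1)=P_s\) on representatives only spells out what the paper asserts just before the theorem; the one slip there is harmless: near \((0,1)\) the singular factor is \(x^{-\alpha}\), so identifying \(P_1\) uses \(\alpha<2\), and identifying \(Q_1\) near \((1,0)\) uses \(\beta<2\), not the other way round as you wrote.
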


\begin{proof}

Equation
\eqref{eq:incidentfacetrichotomyformula}
follows directly from
\eqref{eq:incidentexactscaling}
and the positivity and finiteness of
\[
d(P_1,Q_1).
\]

The three cases follow from the sign of
\[
1-\frac{A}{2}.
\]

\end{proof}

%================================================
\subsection{Higher-Codimension Product Incidence}
\label{subsec:highercodimensionproductincidence}
%================================================

The same cumulative-weight mechanism extends to higher codimension.

Let
\[
F_I^\circ
\]
be an open face with
\[
A_I\geq2.
\]

Suppose every relevant proper incident open face
\[
F_J^\circ,
\qquad
J\subsetneq I,
\]
is accessible.

Then the finite-distance completion contains those proper incident
faces but omits
\[
F_I^\circ.
\]

If
\[
A_I=2,
\]
the corresponding homogeneous scaling is critical.

If
\[
A_I>2,
\]
the deeper corner is supercritical.

A complete metric description of all mutual distances among multiple
incident higher-codimension faces may depend on the detailed active
weight configuration. Nevertheless, the accessibility threshold itself
is governed solely by
\[
A_I
=
\sum_{i\in I}\alpha_i.
\]

%================================================
\subsection{Stability under Additional Subcritical Hypersurfaces}
\label{subsec:sumsubcriticalstability}
%================================================

The sum interaction behaves differently.

\begin{proposition}[Subcritical Activation Stability for the Sum Interaction]
\label{prop:sumsubcriticalstability}

Let
\[
I\subseteq\{1,\ldots,N\}
\]
and suppose
\[
F_I^\circ\neq\varnothing.
\]

If
\[
\alpha_i<2
\qquad
\text{for every }i\in I,
\]
then
\[
F_I^\circ
\]
is accessible for the sum interaction.

More generally, suppose an accessible active set \(I\) is enlarged to
\[
I\cup J
\]
by activating additional hypersurfaces whose weights satisfy
\[
\alpha_j<2
\qquad
\text{for every }j\in J.
\]

Then every open face with complete active set
\[
I\cup J
\]
is accessible for the sum interaction.

\end{proposition}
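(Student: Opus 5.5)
The plan is to reduce both assertions to the facewise sum criterion \eqref{eq:sumfaceaccessibility}, which rests on Theorem~\ref{thm:sumaccessibility}. We then need only verify that the relevant effective weight $M$ is strictly below $2$.

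For the first assertion, assume $\alpha_i<2$ for every $i\in I$ and $F_I^\circ\neq\varnothing$. Since $I$ is a finite nonempty set, the maximum
\[
M_I=\max_{i\in I}\alpha_i
\]
is attained at some index $i_0\in I$. Hence $M_I=\alpha_{i_0}<2$. Every point $p\in F_I^\circ$ has complete active index set $I(p)=I$ by \eqref{eq:openface}, so $M(p)=M_I<2$. Theorem~\ref{thm:sumaccessibility} then gives accessibility of each such $p$, and by Definition~\ref{def:accessibleopenface} the open face $F_I^\circ$ is accessible.

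For the general statement, take an accessible active set $I$ with $F_I^\circ\neq\varnothing$. By \eqref{eq:sumfaceaccessibility}, $M_I<2$, so $\alpha_i<2$ for every $i\in I$. Now adjoin a set $J$ of indices with $\alpha_j<2$ for every $j\in J$. Every index in $I\cup J$ then has weight strictly below $2$. Because $I\cup J$ is finite, this gives
\[
M_{I\cup J}=\max\{M_I,\max_{j\in J}\alpha_j\}<2.
\]
Any nonempty open face with complete active set $I\cup J$ is therefore accessible, by the first part applied to the index set $I\cup J$. If no such face exists the claim is vacuous.

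The main point needing care is the interpretive caveat stressed in Remark~\ref{rem:openfacesversusintersections}. The statement concerns open faces whose complete active set is exactly $I\cup J$, not arbitrary points of $\bigcap_{i\in I\cup J}H_i$. We will say explicitly that we never use deeper strata, since accessibility there depends on possibly supercritical further weights. The other point to note is that finiteness of the index set is what makes the strict inequality survive the maximum; a supremum over infinitely many weights could equal $2$. There is no genuine analytic obstacle, since all the metric work is contained in Theorem~\ref{thm:sumaccessibility}.
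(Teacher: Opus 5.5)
Your proposal is correct and takes essentially the same approach as the paper: both reduce to Theorem~\ref{thm:sumaccessibility} by checking $M_I<2$ and then $M_{I\cup J}=\max\{M_I,\max_{j\in J}\alpha_j\}<2$. Your added remarks, that finiteness keeps the inequality strict and that the claim concerns only open faces with complete active set $I\cup J$, match the caveats the paper records alongside the proposition.
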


\begin{proof}

Since
\[
\alpha_i<2
\qquad
\text{for every }i\in I,
\]
we have
\[
M_I
=
\max_{i\in I}\alpha_i
<2.
\]

Hence
\[
F_I^\circ
\]
is accessible by
Theorem~\ref{thm:sumaccessibility}.

For the enlarged active set,
\[
M_{I\cup J}
=
\max
\left\{
M_I,
\max_{j\in J}\alpha_j
\right\}.
\]

Every term in this maximum is strictly less than \(2\). Therefore
\[
M_{I\cup J}<2,
\]
and the corresponding open face is accessible.

\end{proof}

\begin{remark}[No Claim for Arbitrary Set-Theoretic Intersections]
\label{rem:noarbitraryintersectionclaim}

The proposition concerns open faces and their complete active index
sets.

It does not assert that every point of a set-theoretic intersection
\[
H_{i_1}\cap\cdots\cap H_{i_k}
\]
is accessible merely because the listed hypersurfaces are individually
accessible.

A point in this intersection may lie on additional boundary
hypersurfaces. Accessibility is always determined by the full active
index set
\[
I(p).
\]

\end{remark}

%================================================
\section{Conclusion and Outlook}
For multi-weighted conformal metrics on manifolds with corners, the
interaction law determines the effective singular order:
\[
A_I=\sum_{i\in I}\alpha_i\quad\text{for }g_\Pi,\qquad
M_I=\max_{i\in I}\alpha_i\quad\text{for }g_\Sigma.
\]
The preceding results show that these quantities govern finite-distance
accessibility, collapse of active normal directions, metric completion,
boundary incidence, and the local snowflake geometry of accessible faces.
The product law accumulates active singularities, while the sum law is
controlled by the strongest active weight.

\begin{table}[htbp]
\centering
\small
\begin{tabular}{|p{3.2cm}|p{5.0cm}|p{5.0cm}|}
\hline
 & \textbf{Product interaction} & \textbf{Sum interaction} \\
\hline
Effective active weight
&
\(A_I=\sum_{i\in I}\alpha_i\)
&
\(M_I=\max_{i\in I}\alpha_i\)
\\
\hline
Accessibility
&
\(A_I<2\)
&
\(M_I<2\)
\\
\hline
Boundary exponent
&
\(1-A_I/2\)
&
\(1-M_I/2\)
\\
\hline
Higher-codimension accessibility effect
&
Cumulative active weight
&
Dominant active weight
\\
\hline
Metric separation of incident strata
&
Codimension-two trichotomy proved in the homogeneous model
&
No analogous separation theorem asserted here
\\
\hline
\end{tabular}
\caption{Comparison of the product and sum interaction laws.}
\label{tab:interactioncomparison}
\end{table}

Several directions remain open. A primary open problem is to determine
whether the codimension-two metric-separation trichotomy extends to
higher codimension, where incident strata are indexed by different
subsets of the active hypersurfaces and the separation behaviour may
depend on both the active weights and the incidence combinatorics.
Other natural next questions include the behaviour of geodesics near
critical strata, analytic properties of the associated
Laplace--Beltrami operators (in the spirit of the Weyl-law analyses
for singular metrics in
\cite{CdVDdHT2024,DietzeRead2024,Dietze2025,CdVDT2026}, though those
works treat a single degeneracy exponent rather than the multi-weight
corner interactions studied here), curvature and measure-theoretic
effects of multi-weight interactions, and extensions to more general
nonconformal or variable-exponent singular metrics. The present
results provide a geometric framework in which such questions can be
posed while keeping accessibility, completion topology, and induced
boundary geometry on a common footing.

\section*{Acknowledgements}
The author acknowledges the University of Malaya, where he received his
undergraduate education in mathematics. The mathematical training received
there provided an important foundation for the author's continued independent
study and research.

\section*{Declarations}

\begin{itemize}
\item Funding: The author received no funding for this research.
\item Conflict of interest: The author declares no conflict of interest.
\item Ethics approval and consent to participate: Not applicable.
\item Consent for publication: Not applicable.
\item Data availability: Not applicable; this manuscript reports no data.
\item Materials availability: Not applicable.
\item Code availability: Not applicable.
\item Author contribution: M.F.Z.A. is the sole author and is responsible for all aspects of this work.
\end{itemize}

\bibliographystyle{amsplain}
\bibliography{references}

\end{document}